\documentclass[11pt]{amsart}
 \usepackage{amssymb,amsmath,latexsym,amsthm}

 \usepackage{times}
 \usepackage{dsfont}
 \usepackage[all]{xy}

 \usepackage[english,francais]{babel}

 \usepackage{url} 
 \usepackage{hyperref}

 \usepackage{color}

 \definecolor{greenbf}{rgb}{0, 0.7 ,0.3}
 
  

 \hypersetup{	
colorlinks=true, 
breaklinks=true, 
urlcolor= blue, 
linkcolor= red,	
citecolor= {greenbf},	
}



\newtheorem{theorem}{{Theorem}}[section]
\newtheorem{proposition}[theorem]{{Proposition}}
\newtheorem{isom.ext}[theorem]{{Trivial isometric extension}}
\newtheorem{corollary}[theorem]{{Corollary}}
\newtheorem{fact}[theorem]{{\sc Fact}}
\newtheorem{remark}[theorem]{{Remark}}
\newtheorem{example}[theorem]{{Example}}

\newtheorem{lemma}{{Lemma}} 


\def\C{\mathbb{C}}
\def\R{\mathbb{R}}

\def\H{\mathbb{H}}

\def\U{\sf{U}}
\def\SU{\sf{SU}}


\def\P{\mathbb{P}}



\def\GL{{\sf{GL}}}
\def\O{{\sf{O}}}
\def\SO{{\sf{SO}}}
\def\SU{{\sf{SU}}} 
\def\SL{{\sf{SL}}}

\def\End{{\sf{End}}}
\def\Mat{{\sf{Mat}} }
\def\Iso{{\sf{Iso}} }

\def\Span{{\sf{Span}} }

\def\Mink{{\sf{Mink}} }
\def\dS{{\sf{dS}} }
\def\AdS{{\sf{AdS}} }
 
\def\tr{{\sf{tr}}}

\def\det{{\sf{det}}}

\def\p{{\mathfrak{p}}}
\def\k{{\mathfrak{k}}}

\def\a{{\mathfrak{a}}}
\def\g{{\mathfrak{g}}}

\def\s{{\mathfrak{s}}}
\def\h{{\mathfrak{h}}}

\def\I{{\mathfrak{I}}}

 \setcounter{tocdepth}{1}

\begin{document}

\selectlanguage{english}


\title[Hermite-Lorentz metrics]{On homogeneous  Hermite-Lorentz spaces}
\author [A. Ben Ahmed]{Ali Ben Ahmed}
\address{D\'epartement de Math\'ematiques, Universit\'e de Tunis ElManar\hfill\break\indent
Campus universitaire, 2092, Tunis, TUNISIA}
\email{benahmedal@gmail.com}

\author[A. Zeghib]{Abdelghani Zeghib }
\address{UMPA, CNRS, 
\'Ecole Normale Sup\'erieure de Lyon\hfill\break\indent
46, all\'ee d'Italie
69364 LYON Cedex 07, FRANCE}
\email{abdelghani.zeghib@ens-lyon.fr 
\hfill\break\indent
\url{http://www.umpa.ens-lyon.fr/~zeghib/}}
\date{\today}
\maketitle

\begin{abstract} We define naturally Hermite-Lorentz metrics on almost-complex manifolds as special case of pseudo-Riemannian metrics compatible with the almost complex structure. We study their isometry groups.

\end{abstract}
 
  \tableofcontents

\section{Introduction}

\subsubsection*{Simplest pseudo-Hermitian structures} Let us call a quadratic form $q$ on a complex space of dimension $n+1$ of {\bf  Hermite-Lorentz} type if it is  $\C$-equivalent to the standard form  
$q_0 = - |z_0 |^2 + |z_1|^2 + \ldots + | z_n|^2$ on $\C^{n+1}$. In other words, $q$ is Hermitian, and as a real form, it  has a signature $- - + \ldots +$. Here, Lorentz refers to the occurrence of exactly one negative sign (in the complex presentation). Classically, this one negative sign distinguishes, roughly,  between time and space components. (A ``complex-Lorentz'' form could perhaps be an equally informative terminology?) 

One can then define Hermite-Lorentz metrics on almost complex   manifolds. If $(M, J)$ is an almost complex manifold, then $g$ is a Hermite-Lorentz metric if $g$ is a tensor such that 
for any $x \in M$, $(T_xM, J_x, g_x)$ is a Hermite-Lorentz linear space. 

Hermite-Lorentz metrics generalize (definite)  Hermitian metrics, and they are the nearest from them, among general pseudo-Hermitian structures,   in the sense that they have the minimal (non-trivial) signature.
 Our point of view  here is  to compare Hermite-Lorentz metrics,  on one hand with (definite) Hermitian metrics in complex geometry, and with Lorentz metrics in (real) differential geometry.

\subsubsection*{$H$-structure} Let $\U(1,n) \subset \GL_{n+1}(\C)$ be the unitary  group of $q_0$. Then, a Hermite-Lorentz structure on a manifold $M$ of real dimension $2n+2$ is a reduction of the structural group of $TM$ to $\U(1, n)$. They are different from ``complex Riemannian'' metrics which are reduction to $\O(n+1, \C)$.

\subsubsection*{K\"ahler-Lorentz spaces}

As in the positive definite case,  
the   K\"ahler form $\omega$  is  defined  by $\omega (u, v) = 
g(u, Jv)$. It is a $J$-invariant 2-differential form. A {\bf K\"ahler-Lorentz} metric corresponds to the case where $J$ is integrable and $\omega $ is closed.
A { K\"ahler-Lorentz} manifold is in particular symplectic. 

Conversely,  from the symplectic point of view, a symplectic manifold $(M, \omega)$ is K\"ahler-Lorentz if $\omega$ can be calibrated with a special complex structure $J$. Let us generalize the notion of calibration 
 by  letting it to mean that $J$  satisfies  that $g(u, v) = \omega(u, J v)$ is non-degenerate, i.e. $g$ is a pseudo-Hermitian  metric. Now, the classical K\"ahler case  means that $g$ is Hermitian and in addition 
 $J$ is integrable. So K\"ahler-Lorentz means that $g$ is ``post-Hermitian'' in the sense
 that it has a  Hermite-Lorentz signature.  
 
 \subsubsection*{Holomorphic sectional curvature}

Differential geometry can be developed for general pseudo-Hermitian 
  metrics exactly as in the usual Hermitian   as well as the usual  pseudo-Hermitian  cases.
In particular, there is a Levi-Civita connection and a Riemann curvature tensor $R$. 
For a tangent vector $u$, the {\bf holomorphic sectional  curvature} $K(u)$ is the sectional curvature of the real 2-plane
$\C u$; $K(u) = \frac{g(R(u, Ju)Ju, u)}{g(u,u)^2}$
 (this requires $u$ to be non isotropic $g(u, u) \neq 0$, in order to divide by the  
 volume $u\wedge Ju$). So, $K$ is a real function on an open set of the projectivization  bundle of $TM$
 (which fibers over $M$ with fiber type $\P^n(\C)$).

In fact, $K$   determines the full  Riemann tensor in the pseudo-K\"ahler case \cite{Kob-Nom, Romero1} (but not 
 in the general pseudo-Hermitian  case). In particular, the case $K$ constant in the definite  K\"ahler case corresponds to the most central homogeneous spaces: $\C^n$, $\P^n(\C)$ and $\H^n(\C)$ (the complex hyperbolic space). K\"ahler-Lorentz spaces of constant curvature are   introduced below.

 \subsection{Examples}
 
 ${}$ 
 
  We are going to give examples of homogeneous spaces $M= G/H$, where 
 the natural (generally unique) $G$-invariant geometric structure is a K\"ahler-Lorentz metric.  
 
\subsubsection{Universal K\"ahler-Lorentz spaces of constant holomorphic curvature} If  a K\"ahler-Lorentz metric has constant holomorphic sectional curvature, then it is locally isometric 
to one of the following spaces:

\begin{enumerate}

\item
 The universal (flat Hermite-Lorentz)  complex Minkowski space $\Mink_n(\C)$ (or $\C^{1, n-1}$), that is   $\C^n$ endowed with   $q_0 = - |z_1 |^2 + |z_2|^2 + \ldots + | z_{n}|^2$.

  \medskip

\item
The complex de Sitter space $\dS_n(\C) =   \SU(1, n)/ \U(1, n-1)$ 
\footnote{Here $\U(1, n-1)$ as a subgroup of $\SU(1, n)$ stands for matrices of the form
  \[ \begin{pmatrix} \begin{matrix} \lambda A &  0\\0 & \lambda^{-1} \end{matrix}
 \end{pmatrix}, \vert\lambda \vert = 1,  A \in \SU(1, n-1).  \]  In  general $\U(1, n-1)$ designs a group isomorphic to a product $\U(1) \times \SU(1, n-1)$, where  the embedding $U(1)$ depends on the context.} It has a positive constant holomorphic sectional curvature.

\medskip

\item The complex anti de Sitter space $\AdS_n(\C) =   \SU(2, n-1) / \U(1, n-1)$. It has negative curvature.

\medskip

 \end{enumerate}
 
$\bullet$  As said above, Hermite-Lorentz metrics are generalizations of both Hermitian metrics (from the definite to the indefinite) and Lorentz metrics (from the real to the complex). 
 Let us    draw up in the following table the analogous of our previous spaces in both Hermitian and Lorentzian settings.

 \bigskip
 \begin{center}

\begin{tabular}{|l|l|l|l|}
\hline
K\"ahler-Lorentz spaces & Hermitian (positive definite)   & (Real) Lorentz   \\
of constant curvature & counterpart  & counterpart  \\ \hline
$\Mink_n(\C)$ &  $\C^n$ &  $\Mink_n(\R)$  \\ \hline
 $\dS_n(\C)= $  & $\P^n(\C) = $ &  $\dS_n(\R) =$ \\ 
 \SU(1, n)/U(1, n-1) & $ \SU(1+n)/\U(n)$ & $ \SO^0(1, n) /\SO^0(1,n-1)$  \\ \hline
 $\AdS_n(\C) = $ &  $\mathbb H^n(\C)= $  &    $ \AdS_n(\R)= $ \\
 $\SU(2,n-1)/\U(1,n-1)$ &  $\SU(1,n)/\U(n)$  & $  \SO(2, n-1)/\SO^0(1,n-1)$ \\ 
\hline
\end{tabular}

 \end{center} 
 
 \bigskip
 
 (Of course, we also have as Riemannian counterparts of constant sectional curvature, respectively, the Euclidean, spherical and hyperbolic spaces, $\R^n$, $\mathbb S^n$ and $\H^n$).

$\bullet$ The K\"ahler-Lorentz spaces of constant holomorphic sectional curvature are pseudo-Riemannian symmetric spaces (see below for further discussion). 
They are also  holomorphic symmetric  domains in $\C^n$. Indeed, $\dS_n(\C)$ is the exterior of a ball in the projective space $\P^n(\C)$. It is strictly pseudo-concave. The ball of $\P^n(\C)$ is identified with the hyperbolic space $\H^n(\C)$, and then $\dS_n(\C)$ is the space of 
geodesic complex hypersurfaces of $\H^n(\C)$. 

 As for      $ \AdS_n(\C)$, it can be represented as  the open set $q <0$ of $\P^n(\C)$, where $q =- \vert z_0\vert^2 - \vert z_1\vert^2 + \vert z_2 \vert^2 + \ldots  + \vert z_n \vert^2$.

\subsubsection{Irreducible K\"ahler-Lorentz symmetric spaces}

Let  $M= G/H$ be  a homogeneous space. Call $p$ the base point $1.H$. The isotropy 
representation at $p$ is identified  with the adjoint representation  $\rho: H \to  \GL(\g/\h)$, where $\g$ and $\h$ are the respective Lie algebras of $G$ and $H$. The homogeneous space is of Hermite-Lorentz type if $\rho$ is conjugate to a representation in $\U(1, n)$
(where the real dimension of $G/H$ is $2n +2$).  

The space $G/H$ is symmetric if $-Id_{T_pM}$ 
belongs to the image of $\rho$. This applies in particular to the two following 
spaces: 
$$\C \dS_n = \SO^0(1, n+1) / \SO^0(1, n-1) \times \SO(2)$$
$$\C \AdS_n = \SO(3, n-1)/\SO(2) \times \SO^0(1, n-1)$$

\subsubsection*{Complexification} The isotropy representation of these two spaces is the 
complexification of the $\SO^0(1, n-1)$ standard representation in $\R^n$, i.e. its diagonal 
action on $\C^n = \R^n +i \R^n$; augmented with the complex multiplication by $\U(1) \cong \SO(2)$. 

If one agrees that  a complexification of a homogeneous space $X$ is  a homogeneous space $\C X$ whose  isotropy  is the complexification of that of $X$,  then $\C \dS_n$ and $\C \AdS_n$
appear naturally as complexification of $\dS_n(\R)$ and $\AdS_n(\R)$ respectively. In contrast, $\dS_n(\C)$ and $\AdS_n(\C)$ are the set of complex points of the same algebraic object as $\dS_n(\R)$ and $\AdS_n(\R)$. As another example, the complexification of $\mathbb S^n$ is not $\P^n(\C)$ but rather the K\"ahler Grassmanian space $\SO(n+2)/\SO(n)\times \SO(2)$?!

\subsubsection{List}  There are lists of pseudo-Riemannian {\bf irreducible} symmetric spaces, see for instance \cite{Besse, Berger, Kath}. (Here irreducibility  concerns isotropy, but for symmetric spaces, besides the flat case, the holonomy and isotropy groups coincide. In particular,  holonomy irreducible symmetric spaces are isotropy irreducible) . It turns out that the five previous spaces are all the K\"ahler-Lorentz (or equivalently  Hermite-Lorentz) ones.  \\
{\it Our theorem \ref{irreducible} below will give in particular  a non list-checking proof of this classification.}

 \subsubsection*{Acknowledgments}  We would like to thank Benedict Meinke for his  careful reading and valuable remarks on the article.

 \section{Results}

\subsubsection{Convention} It is sometimes a nuisance and with no real interest to deal with ``finite objects''.  We will say, by the occasion, that  some fact is true up to finite index, if it is not necessarily satisfied by the given group itself, say $H$, but for another one $H^\prime$ commensurable to it, that is $H \cap H^\prime$ 
has finite index in both. We also use ``up to finite cover''  for a similar meaning.

\subsubsection{Objective} Our aim  here is the study of isometry groups $\Iso(M, J, g)$ of Hermite-Lorentz manifolds. They are Lie groups acting holomorphically on $M$.   If $g$ were (positive definite) Hermitian, then $\Iso(M, J, g)$ acts properly on $M$, and is in particular compact if $M$ is compact. 

This is no longer true for $g$ indefinite. 

In the real case, that is without the almost complex structure, there have been many works tending to understand how and why the isometry group of a Lorentz manifold can act non-properly (see for instance \cite{ADZ, DMZ}). The Lorentz case is the simplest among all the pseudo-Riemannian cases, since, with its one negative sign, it lies as the nearest to the Riemannian case. For instance, the situation of signature $- - + \ldots +$ presents more formidable  difficulties. With this respect, the Hermite-Lorentz case seems as an intermediate situation, which besides mixes in a beautiful way pseudo-Riemannian and complex geometries.

\subsection{Homogeneous vs Symmetric}  We are going to prove facts characterizing these K\"ahler-Lorentz symmetric  spaces   by means of a homogeneity  hypothesis (as stated  in Theorems \ref{irreducible} and \ref{nonproper} below).

In pseudo-Riemannian geometry, 
it is admitted that, among homogeneous spaces, the most beautiful are those of constant sectional curvature, and then the symmetric ones, and so on... This also applies to pseudo-K\"ahler spaces, where the sectional curvature is replaced by the holomorphic sectional curvature.

In general,  being (just)  homogeneous is so weaker than being symmetric which in turn is weaker than having constant (sectional or holomorphic sectional) curvature.

For instance, Berger spheres are homogeneous Riemannian metrics on the 3-sphere that have non constant sectional curvature and are not symmetric. On the other hand different Grassmann spaces are irreducible symmetric Riemannian  (or Hermitian) spaces but do not have  constant sectional (or holomorphic) curvature.

Our first theorem says that in the framework of Hermite-Lorentz spaces, being homogeneous implies essentially symmetric!

\begin{theorem} \label{irreducible} Let $(M, J, g)$ be a Hermite-Lorentz almost complex space,  homogeneous under the action of a Lie group $G$. Suppose that    
the isotropy group $G_p$ of some point $p$ acts $\C$-irreducibly on $T_pM$, and $\dim_\C M >3$. Then $M$ is a global  K\"ahler-Lorentz  symmetric space, and it is isometric,
 up to a cyclic cover, to  $\Mink_n(\C)$, $\dS_n(\C)$, $\AdS_n(\C)$, $\C \dS_n$ or $\C \AdS_n$.
 
\end{theorem}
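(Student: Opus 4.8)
The plan is to exploit the $\C$-irreducibility of the isotropy representation $\rho: G_p \to \U(1,n) \subset \GL_\C(T_pM)$ to pin down the possibilities for the Lie algebra $\h$ of $G_p$ and its action on $V := T_pM$, and then to upgrade the algebraic rigidity to the geometric conclusion (symmetry, and then the explicit list). First I would analyze the linear algebra: the isotropy group preserves the Hermite-Lorentz form $q_0$, hence $\rho(G_p)$ is a subgroup of $\U(1,n)$ acting irreducibly over $\C$ on $\C^{1,n}$. I would classify the connected closed subgroups of $\U(1,n)$ acting $\C$-irreducibly, distinguishing whether $\rho(G_p)$ is compact or not. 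The compact case forces $\rho(G_p) \subset \U(n+1)$ preserving a definite form as well as $q_0$, which is impossible for an indefinite $q_0$ unless the image is trivial — actually one has to be careful: a compact group in $\U(1,n)$ fixes a definite form in the pencil, and irreducibility then collides with the indefiniteness, so the isotropy must be non-compact. This is the crucial structural input and I expect the bulk of the representation-theoretic work here: showing that $\C$-irreducible non-compact isotropy inside $\U(1,n)$ leaves essentially the candidates whose isotropy is (the complexification of) $\SO(1,n-1)$, or $\U(1,n-1)$, or the full $\U(1,n)$, $\SU(1,n)$-type stabilizers — matching the five spaces in the statement.

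Next I would bring in the curvature tensor. Because $G$ acts transitively and $G_p$ acts $\C$-irreducibly, the space of $G_p$-invariant tensors of each type is small; in particular the Ricci tensor, being $G_p$-invariant and symmetric, must be a real multiple of $g$ (an Einstein condition) — here $\dim_\C M > 3$ is used to avoid low-dimensional coincidences. More importantly, I would show the full curvature operator $R \in \mathrm{Sym}^2(\Lambda^2 V^*)$ lies in a low-dimensional space of $G_p$-invariants; irreducibility of the isotropy, together with the first Bianchi identity and the Kähler-type symmetries induced by $J$ (which is $G_p$-invariant hence parallel, so $(M,J,g)$ is Kähler-Lorentz), should force $R$ to be a combination of the ``constant holomorphic curvature'' model tensor and possibly one ``Grassmannian-type'' model tensor. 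The decisive step is then to show $\nabla R = 0$: the covariant derivative $\nabla R$ is a $G_p$-equivariant element of $V^* \otimes \mathrm{Sym}^2\Lambda^2 V^*$, and I would argue that this representation contains no nonzero invariant vector (again using $\C$-irreducibility and $\dim_\C M > 3$), whence $\nabla R = 0$ and $M$ is locally symmetric; transitivity and completeness (which follows from homogeneity) promote this to a global symmetric space.

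Finally, to identify $M$ with one of the five models: once $M$ is a Hermite-Lorentz symmetric space with isotropy-irreducible holonomy, I would invoke that for symmetric spaces holonomy and isotropy coincide (as noted in the excerpt), so $M$ is an irreducible pseudo-Riemannian symmetric space carrying a compatible complex structure of Lorentz signature; matching the isotropy data obtained in the first step against the candidates — $\Mink_n(\C)$ in the flat case, $\dS_n(\C)$ and $\AdS_n(\C)$ when the isotropy is of $\U(1,n-1)$ type with nonzero constant holomorphic curvature, and $\C\dS_n$, $\C\AdS_n$ when the isotropy is the complexified $\SO(1,n-1)$ — yields the classification, the ``cyclic cover'' ambiguity accounting for the center of the relevant groups (e.g. passing between $\SU$ and its quotient, or the $\Z$-cover coming from a noncompact $\U(1)$ factor). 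I would present this last matching without re-deriving the curvature of each model, citing the Examples section.

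The step I expect to be the main obstacle is the representation-theoretic classification of $\C$-irreducible subalgebras $\h \subset \u(1,n)$ together with the vanishing $\nabla R = 0$: controlling the $G_p$-invariants in $V^* \otimes \mathrm{Sym}^2\Lambda^2 V^*$ requires a genuine case analysis (compact vs. noncompact isotropy, and within the noncompact case the real forms that can act irreducibly while preserving a Lorentzian Hermitian form), and the low-dimensional exclusion $\dim_\C M > 3$ signals that the generic argument breaks for small $n$, so the hypothesis must be used precisely where the invariant-theory estimate is tight.
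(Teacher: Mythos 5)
Your overall architecture (classify the $\C$-irreducible isotropy inside $\U(1,n)$, then rigidify the geometry by invariant theory) is close to the paper's, and your first step is essentially Proposition~\ref{subgroups} and Corollary~\ref{Zariski.U}: a compact subgroup of $\U(1,n)$ cannot act irreducibly, and a non-precompact irreducible one has Zariski closure containing the diagonal $\SO(1,n)$ or all of $\SU(1,n)$. But there is a genuine gap at the start of your geometric argument: the parenthetical ``$J$ is $G_p$-invariant hence parallel, so $(M,J,g)$ is K\"ahler-Lorentz'' is false as stated. Invariance of a tensor under a transitive isometric action does not make it parallel for the Levi-Civita connection, and the hypotheses only give an \emph{almost} complex structure and a Hermitian (not K\"ahler) metric; the paper's own example of $\SL_2(\C)$ with $\C$-irreducible isotropy $\SL_2(\R)$ carries an invariant Hermite-Lorentz metric that is neither K\"ahler nor symmetric. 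Since the conclusion of the theorem includes ``K\"ahler-Lorentz,'' and since your analysis of the curvature operator leans on the K\"ahler-type symmetries of $R$, you must actually prove $d\omega=0$ and the vanishing of the Nijenhuis tensor. This is done in the paper by two invariant-theoretic vanishing statements for the diagonal $\SO(1,n)$-action on $\R^{n+1}+i\R^{n+1}$ (Facts~\ref{no.3.form} and~\ref{integrability}), and it is exactly there --- not in the Ricci/Einstein step you point to --- that the hypothesis $\dim_\C M>3$ is used.

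The second weak point is your decisive step $\nabla R=0$, which is asserted rather than proved and which the paper never needs in this form. The paper splits into three cases and gets symmetry by softer means: if the Zariski closure of the isotropy contains $\SU(1,n)$, the holomorphic sectional curvature is an invariant function on the non-null complex lines, on which $\SU(1,n)$ acts transitively, hence is constant; if $\U(1)$ lies in the isotropy, then $-\mathrm{Id}_{T_pM}$ is realized by an isometry and $M$ is symmetric with no curvature computation at all; and if the identity component of the isotropy is exactly the diagonal $\SO(1,n)$, one shows the two distributions modeled on $\R^{n+1}$ and $i\R^{n+1}$ are integrable and totally geodesic (again by the absence of equivariant Levi forms) and deduces a local de Rham splitting forcing flatness. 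Your proposed route would instead require showing that $V^*\otimes\mathrm{Sym}^2\Lambda^2V^*$ has no invariant compatible with the Bianchi identities under a group as small as the diagonal $\SO(1,n)$ on $W\oplus W$ ($W=\R^{n+1}$ standard); the generic absence of $W$ inside $W^{\otimes 4}$ makes this plausible, but there are low-dimensional coincidences and potential volume-form invariants to exclude, so as written this is a claim, not a proof. Finally, the identification of the five models in your last paragraph compresses real work: in the $\SO(1,n)$ and $\U(1)\times\SO(1,n)$ cases the paper must still determine the curvature sign and reconstruct $G$ (e.g.\ as $\SO(1,n+2)$) from totally geodesic Lorentz submanifolds of maximal isotropy, which does not follow from matching isotropy types alone.
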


 $\bullet$ The content of the theorem is:
 
 1. Irreducible  isotropy $\Longrightarrow$  symmetric,  
 
 2. The list of Hermite-Lorentz symmetric spaces with irreducible isotropy are the five mentioned  ones. 
   This   fact 
 may be extracted  from Berger's classification of pseudo-Riemannian irreducible symmetric spaces. Here, we provide a direct proof.

 $\bullet$ In the (real) Lorentz case, there is a stronger version, which states that an isotropy irreducible homogeneous space has constant sectional curvature \cite{BoZ} (the fact that irreducible and symmetric implies constancy of the curvature was firstly observed in \cite{Cohen} by consulting Berger's list).

 $\bullet$ The theorem is not true in the Riemannian case. As an example of a compact  irreducible isotropy non-symmetric space, we have $M = G/K$, where $G = \SO(\frac{n(n-1)}{2})$, 
 and $K$ is the image of the representation of $\SO(n)$ in the space of trace free  symmetric 2-tensors on $\R^n$ (see \cite{Besse} Chap 7).

\subsection{Actions of semi-simple Lie groups} Let now $(M, J, g)$ be an almost Hermite-Lorentz manifold and $G$ a Lie group acting (not necessarily transitively)  on $M$ by preserving its structure.  We can not naturally 
make a hypothesis on the isotropy in this case, since it can be    trivial (at least for generic points).
It is however more  natural to  require 
  dynamical properties on the action. As discussed in many places (see for instance \cite{ADZ, DMZ}), non-properness of the $G$-action is a reasonable condition allowing interplay between dynamics and the geometry of the action. For instance, without it everything is possible; a Lie group $G$ acting  by left translation on itself can be equipped by any    type of tensors by  prescribing it  on the Lie algebra.  
  
  The literature contains many   investigations   on non-proper actions preserving Lorentz metrics \cite{A1, A2, ADZ, DMZ} and specially \cite{Kow}. We are going here to ask similar questions on the Hermite-Lorentz case. We restrict ourselves here to transitive actions, since the general idea, within this geometric framework, is that a $G$-non-proper action must have   non-proper $G$-orbits, i.e. orbits 
  with non-precompact stabilizer. The natural starting point is thus the study of non-proper transitive actions.

\begin{theorem} \label{nonproper}

Let $G$ be a non-compact simple (real) Lie group  of finite center  not   locally isomorphic to $\SL_2(\R)$,  $\SL_2(\mathbb C)$ or $\SL_3(\R)$. Let $G$  act non-properly transitively holomorphically and isometrically  on an almost complex  Hermite-Lorentz space $(M, J, g)$, with $\dim M >3$.    Then,  
 $M$ is a global  K\"ahler-Lorentz irreducible  symmetric space, and is isometric,
 up to a cyclic cover,  to    $\dS_n(\C)$, $\AdS_n(\C)$, $\C \dS_n$ or $\C \AdS_n$.

 \end{theorem}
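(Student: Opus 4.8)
We plan to deduce the theorem from Theorem~\ref{irreducible}, by first showing that a non-proper transitive isometric action of a simple group is forced to have $\C$-irreducible isotropy, and then by removing $\Mink_n(\C)$ from the resulting list.

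First I would set up the bookkeeping. Write $M=G/H$ with $H=G_p$ the (closed) stabilizer of a point $p$; since the action is transitive, non-properness is equivalent to $H$ being non-compact. Let $\rho\colon H\to\GL(T_pM)$ be the isotropy representation, which by hypothesis takes values in $\U(1,n)$, where $\dim_\C M=n+1$. A holomorphic isometry fixing $p$ with trivial differential at $p$ is the identity on the component of $M$, so $\ker\rho$ lies in the kernel of the $G$-action, which is finite because $G$ is simple of finite center; hence $\bar H:=\rho(H)$ is a non-compact subgroup of $\U(1,n)$, of dimension $\dim G-(2n+2)$. Finally, $g$ being a $G$-invariant pseudo-Riemannian metric supplies a $G$-invariant volume density on $G/H$, and as $G$ is unimodular this forces $H$, hence $\bar H$, to be unimodular — a constraint I will use below.

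The core step, and the main difficulty, is to prove that $\bar H$ acts $\C$-irreducibly on $T_pM$. Suppose not: then $\bar H$ preserves a proper non-zero complex subspace $W$, and $W\cap W^{\perp}$ is again $\bar H$-invariant. Since a Hermite-Lorentz form has complex Witt index $1$, either $W\cap W^{\perp}\neq 0$, and then it is a $\bar H$-invariant isotropic complex \emph{line} $\ell$, or $W\cap W^{\perp}=0$, and then $T_pM=W\oplus^{\perp}W^{\perp}$ is an orthogonal decomposition with one, definite, summand $V_{0}\neq 0$. In the second case I would iterate: the projection of $\bar H$ to $\U(V_{0})$ is compact, so its image in $\U(V_{0}^{\perp})\cong\U(1,k)$ stays non-compact, and an induction on $\dim M$ shows that a genuine definite factor produces a $G$-invariant metric splitting of $TM$ whose Hermite-Lorentz factor has strictly smaller dimension and still carries a non-proper transitive isometric action of $G$, reducing to the irreducible case (or to a low-dimensional situation handled through the excluded groups). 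The remaining case is the hard one: a $\bar H$-invariant isotropic complex line $\ell$ gives a $G$-invariant isotropic complex line field $\mathcal{L}$ on $M$, equivalently $\bar H$ lies in the minimal parabolic $P\subset\U(1,n)$ stabilizing $\ell$, whose Lie algebra is $\mathfrak{u}(n-1)\oplus\mathfrak{a}\oplus\mathfrak{n}$ with $\mathfrak{n}$ Heisenberg. Here I would argue that the flag $\mathcal{L}\subset\mathcal{L}^{\perp}\subset TM$ together with the invariant metric equips $M$ with a parabolic-type geometry (and, when $\bar H$ meets the split part $\mathfrak{a}$ non-trivially, that $\mathcal{L}$ is geodesic), and then combine the embedding $\mathfrak{h}\hookrightarrow\mathfrak{p}$, the unimodularity of $\mathfrak{h}$, the dimension relation $\dim G=\dim\mathfrak{h}+2(n+1)$, and the bracket structure of $\mathfrak{g}$ compatible with the flag, to conclude that the only simple $G$ that can occur are those locally isomorphic to $\SL_2(\R)$, $\SL_2(\C)$ or $\SL_3(\R)$ — excluded by hypothesis. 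So the invariant isotropic line field cannot occur and $\bar H$ acts $\C$-irreducibly; this elimination is exactly where the three exceptional groups are the genuine obstructions, and it is the step I expect to require the most work.

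Once $\bar H$ acts $\C$-irreducibly on $T_pM$, Theorem~\ref{irreducible} applies (using the dimension hypothesis) and gives that $M$ is a global K\"ahler-Lorentz irreducible symmetric space, isometric up to a cyclic cover to one of $\Mink_n(\C)$, $\dS_n(\C)$, $\AdS_n(\C)$, $\C\dS_n$ or $\C\AdS_n$. To finish I would discard $\Mink_n(\C)$: its group of holomorphic isometries is $\C^n\rtimes\U(1,n-1)$, so if $G$ acted transitively on it then $G\cap\C^n$, being a normal abelian subgroup of the simple group $G$, would be trivial; thus $G$ embeds into $\U(1,n-1)$ with image $\bar G$, and the translation parts of its elements define a $1$-cocycle $\bar G\to\C^n$. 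By Whitehead's lemma this cocycle is a coboundary (since $H^1(\bar G,\C^n)=0$ for $\bar G$ semisimple of finite center), so the $G$-orbit of the origin is a translate of a single $\bar G$-orbit in $\C^n$ — which is never all of $\C^n$, a contradiction. Hence $M$ is isometric, up to a cyclic cover, to $\dS_n(\C)$, $\AdS_n(\C)$, $\C\dS_n$ or $\C\AdS_n$, as claimed.
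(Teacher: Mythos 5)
Your overall strategy is the paper's: show the isotropy must act $\C$-irreducibly, apply Theorem~\ref{irreducible}, and discard the flat model (your Whitehead-lemma argument for excluding $\Mink_n(\C)$ is fine, and is a legitimate way to do what the paper does implicitly via the structure of the isometry group in the flat case). But the core step, irreducibility, is not actually proved in your proposal; both branches of your dichotomy are asserted rather than argued, and they are exactly where all the work lies. In the non-degenerate branch ($W\cap W^\perp=0$), the claim that an $\bar H$-invariant definite summand ``produces a $G$-invariant metric splitting of $TM$'' and lets you induct on dimension is unjustified: an isotropy-invariant orthogonal decomposition only yields $G$-invariant distributions, which need not be integrable, let alone parallel, and even when they are, $G$ does not act on a single leaf, so there is no smaller Hermite-Lorentz homogeneous space to induct on. The paper must work precisely here: it uses Proposition~\ref{subgroups} to pin down the non-compact part of $H$ as $\SO(1,k)$ or $\SU(1,k)$, proves integrability of $S^\perp$ and of an enlarged distribution $S'=S\oplus\R V$ by $\SO(1,k)$-invariant-theory (Fact~\ref{invariant.levi}), shows $\SO(1,k)$ acts trivially on the local leaf space, and contradicts simplicity of $G$.

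The isotropic branch (invariant lightlike complex line, i.e.\ $\bar H$ inside a maximal parabolic) is the genuinely hard, amenable case, and your proposal replaces it by ``combine the flag, unimodularity, the dimension relation and the bracket structure to conclude $G$ is $\SL_2(\R)$, $\SL_2(\C)$ or $\SL_3(\R)$'' --- no mechanism is given, and I see no way such bookkeeping alone forces that conclusion (note also that non-compactness of $H$ does not by itself make its image in $\U(1,n)$ non-precompact; the paper proves non-precompactness separately using the finite-center/Riemannian argument). The paper's actual mechanism is dynamical and geometric: Kowalsky's lemma produces an $\R$-split $X$ with $W^s_X(x)$ isotropic, the Lorentz bound on isotropic dimension forces nilpotent elements in $\g_x$ (Facts~\ref{Kow2}, \ref{dimension}), diverging sequences in the isotropy produce asymptotic lightlike geodesic complex hypersurfaces $F_x$ (\S\ref{asymptotic}, using Corollary~\ref{preservation}), these are shown to form a $G$-invariant codimension-two foliation, and the induced $G$-action on the surface of leaves contradicts the classification of semisimple groups acting on surfaces (Facts~\ref{action.surfaces}, \ref{classification.surfaces}) --- which is exactly where the exclusion of $\SL_2(\R)$, $\SL_2(\C)$, $\SL_3(\R)$ enters. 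Without this (or a genuine substitute), your proposal has a gap at the heart of the theorem.
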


 \subsection{Some Comments}

\subsubsection{Integrabilities} Observe that in both theorems, we do not assume a priori neither that   $J$ is 
integrable, nor $g$ is K\"ahler. 
  \subsubsection{The exceptional cases }
 The hypotheses $\dim M >3$ and $G$ different form $\SL_2(\R)$, $\SL_2(\C)$ and $\SL_3(\R)$ are due on the one hand to ``algebraic'' technical difficulties in proofs and on the other hand  to that    statements  become complicated in this cases. 
 
 As an example,   $\SL_2(\C)$ with its complex structure  admits a left invariant Hermite-Lorentz metric $g$ which is moreover  invariant by the right action of $\SL_2(\R)$. So, its isometry group is  $G= \SL_2(\C) \times \SL_2(\R)$ and its isotropy is $\SL_2(\R)$ acting 
 by conjugacy. On the  Lie algebra $g$ is defined as: $\langle a, b\rangle =  \tr (a\bar{b})$,
 where $a, b \in sl_2(\C) \subset \Mat_2(\C)$. This metric is not K\"ahler, neither symmetric, although the isotropy is $\C$-irreducible, and so Theorem \ref{irreducible}
does not apply in this case.

 In the case of $\SL_3(\R)$ one can construct an example of a left invariant  
 Hermite-Lorentz structure $(J, g)$, with $J$ non-integrable, invariant under the action by conjugacy of a one parameter group, and therefore Theorem \ref{nonproper} does not apply to the 
 $\SL_3(\R)$-case.   Notice on the other hand that, although $\SL_3(\R)$ is a not a complex Lie group, it admits left invariant complex structures. This can be seen for instance by observing that its natural action on $\P^2(\C) \times \P^2(\C)$ has an open orbit on which it acts freely. 
 We hope to come back to this discussion  elsewhere.

 \subsection{About  the proof}
  
 The tangent space at a base point of $M$ is identified to $\C^{n+1}$, and the isotropy $H$ to a subgroup of $\U(1, n)$. 
 
 ({\it Henceforth, in all the article, the complex dimension of the manifold $M$ will be $n+1$}).
 
 \subsubsection{Subgroups of $\U(1, n)$}  We state in Proposition \ref{subgroups}  a classification of such subgroups (when connected and non pre-compact ) into amenable and (essentially) simple ones.
 
 In fact  there  have been many works on the (recent) literature  about these groups, generally 
 related to the study of the holonomy of  pseudo-Riemannian and pseudo-K\"ahler spaces. Indeed, 
 A.  Di Scala   and  T.  Leistner  classified  connected irreducible Lie subgroups 
of $\SO(2, m)$ \cite{D-L} (which contains our case  $\U(1, n)$, for $m = 2n$). On the other hand,  the  case of non necessarily irreducible connected subgroups
of $\U(1, n)$ was   considered by A. Galaev in \cite{Ga} and  Galaev-Leistner in \cite{GL}. There, the authors used the term
``pseudo-K\"ahler of index 2'' for our ``K\"ahler-Lorentz'' here. 

A proof of Proposition \ref{subgroups}
Êcould be extracted from these references, but 
for reader easiness we give here our independent (and we think more geometric) proof! More important, in our proofs
(and hypotheses) of Theorems \ref{irreducible} and  \ref{nonproper}, we deal with non-necessarily connected groups (that is  the isotropy $H$ is not assumed to be connected)! The analysis of connectedness occupies in fact a large part of  \S \ref{sousgroupes}.

  \subsubsection{}   Regarding Theorem \ref{irreducible}, since it acts irreducibly,  the possibilities given 
 for  $H$ (more precisely its Zariski closure) are $\U(1, n)$, $\SU(1, n)$, $\U(1) \times \SO^0(1, n)$, and $\SO^0(1, n)$ (the last acts $\C$-irreducibly but not $\R$-irreducibly). 
 Geometric and algebraic manipulations yield
 the theorem, that   is the explicit possible $G$ and $H$, \S \ref{proof.theorem1}.  At one step of the proof, we show  that $M$ is  a symmetric space, but 
we do not refer to the  Berger's classification to get its form.

Here also, an  alternative, but highly more algebraic approach,  would be to use elements of the theory of reductive homogeneous spaces to  show that 
$M$ is symmetric, and in a next step to consult Berge's list, by  showing which members of it are K\"ahler-Lorentz spaces!  Here again, the most difficulty comes from the a priori  non-connectedness of $H$.

 \subsubsection{}   As for Theorem \ref{nonproper}, the idea is to apply Theorem \ref{irreducible} by showing that $H$ is irreducible (assuming it non-precompact and  $G$ simple). 
 
 - One starts proving that $H$   is big enough, \S \ref{preliminaries}. 
 
 -- If $H$ is  simple, irreducibility consists in  excluding  the intermediate cases $\SO^0(1, k) \subset H \subset \U(1, k)$, for $k<n$, \S  \ref{reductive}. Here again, 
 theory of reductive homogeneous spaces could apply, but not to the general non-amenable (non connected) case.

$\bullet$   However, the most delicate situation to exclude is the   amenable one,  \S \ref{non.reductive}. 
Observe in fact  that, in general,  homogeneous pseudo-Riemannian manifolds with a semi-simple Lie group may have, for instance, 
an abelian    isotropy.  Take for example $\dim H = 1$, that is $H$  a one parameter group, and assume its Lie sub-algebra $\frak h$ is 
non-degenerate (i.e. non-isotropic) in $\frak g$, the Lie algebra of $G$ endowed with its Killing form. The $G$-action on the quotient space $G/H$ will then preserve a pseudo-Riemannian metric given by the restriction of the Killing form on $\frak h^\perp$. (More complicated constriction are surely possible!)

  This part of proof, i.e that $H$ can not be amenable (in our Hermite-Lorentz case) might be  the essential mathematical (i.e. from the point of view of proof) 
 contribution of the present article. Very briefly, amenability allows one to associate  to any point of $M$
 a  lightlike complex hypersurface,
fixed by its isotropy group. Now,  the point is  to prove that this determines a foliation, that is two 
 such hypersurfaces are  disjoint or equal.  The contradiction will come from that the quotient space of such a foliation is
  a complex surface on which the group $G$ acts non-trivially, which was excluded by the hypothesis that $G$ is different from
  $\SL_2(\R)$,  $\SL_2(\mathbb C)$ or $\SL_3(\R)$.

  \section{Some preparatory facts}
 
  \label{preliminaries}

 $\C^{n+1}$ is endowed with the standard Hermite-Lorentz form
 $q_0 = - |z_0 |^2 + |z_1|^2 + \ldots + | z_n|^2$. The Hermitian product is denoted $\langle, \rangle$. 
  
  Recall that $u$ is lightlike (or isotropic) if $q(u) = 0$. A $\C$-hyperplane is lightlike if it equals the orthogonal $\C u^\perp$ of a lightlike vector $u$. 
  
  It is also sometimes useful to consider the  equivalent form $q_1=  z_0 \bar{z_n}  + \bar{z_0}z_n+ |z_1|^2 + \ldots + | z_{n-1}|^2$. 
  
  As usually, 
 $\U(q_0)$ is denoted $\U(1,n)$, and $\SU(1, n)$ its special subgroup. 
 
 The Lorentz group $\SO^0(1, n)$ is a subgroup of $\SU(1, n)$; it acts diagonally on $\C^{n+1} = \R^{n+1} + i\R^{n+1}$, by $A(x +iy) = A(x) + iA(y)$.

 \subsection{Some $\SO^0(1, n)$-invariant theory}

 \subsubsection*{Levi form}

  \begin{fact} \label{invariant.levi} 
  
  1. For $n >1$, there is no non-vanishing  $\SO^0(1, n)$-invariant anti-symmetric  form $\R^{n+1} \times \R^{n+1} \to \R$.

2. Let $n>1$ and  $b : (\R^{n+1} + i \R^{n+1}) \times (\R^{n+1} + i \R^{n+1}) \to \R$  be a $\SO^0(1, n)$-invariant anti-symmetric bilinear form. Then, up to a constant,  $b(u +iv, u^\prime + i v^\prime) = 
  \langle u, v^\prime \rangle - \langle v, u^\prime \rangle $. (That is, up to a constant,  $b$ coincides with the K\"ahler form $i(-dz_0 \wedge \bar{dz_0} + dz_1 \wedge \bar{dz_1} + \ldots + dz_n \wedge \bar{dz_n})$).

  \end{fact}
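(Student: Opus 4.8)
The two statements are linked: part 2 refines part 1, since an antisymmetric $\R$-bilinear form on $\R^{n+1}+i\R^{n+1}$ restricts to one on the real part $\R^{n+1}$, and part 1 says that restriction is zero (as well as the restriction to the imaginary part $i\R^{n+1}$). So the only freedom left is in the ``mixed'' pairing between $\R^{n+1}$ and $i\R^{n+1}$. I would therefore prove part 1 first, then bootstrap to part 2.

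For part 1, the plan is to use the irreducibility of the standard representation of $\SO(1,n)$ on $\R^{n+1}$ together with the symmetry of the invariant Lorentz form $\langle\,,\rangle$. An $\SO(1,n)$-invariant bilinear form $b$ on $\R^{n+1}$ corresponds, via $b(x,y)=\langle Ax,y\rangle$ (using nondegeneracy of $\langle\,,\rangle$), to an $\SO(1,n)$-equivariant endomorphism $A$ of $\R^{n+1}$. For $n>1$ the standard representation is irreducible (over $\R$), so by Schur's lemma $A$ is a scalar $\lambda\,\mathrm{Id}$; but then $b(x,y)=\lambda\langle x,y\rangle$ is symmetric, hence can only be antisymmetric if $\lambda=0$. (One should note that the complexified representation of $\SO(1,n)$ on $\C^{n+1}$ is also irreducible for $n\ge 2$ — it is the complexification of a real form whose commutant is just $\R$ — so Schur applies cleanly; this is the point where $n>1$ is genuinely used, as $\SO(1,1)$ is abelian and reducible.)

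For part 2, decompose the real vector space $W=\R^{n+1}\oplus i\R^{n+1}$ into the two $\SO(1,n)$-submodules $V_1=\R^{n+1}$ and $V_2=i\R^{n+1}$, each isomorphic to the standard module. An invariant antisymmetric $b$ is determined by its three blocks: $b|_{V_1\times V_1}$, $b|_{V_2\times V_2}$, and the mixed part $b|_{V_1\times V_2}$ (the $V_2\times V_1$ block being minus its transpose). The first two vanish by part 1. The mixed part is an invariant bilinear form $V_1\times V_2\to\R$, i.e. an element of $\mathrm{Hom}_{\SO(1,n)}(V_1,V_2^*)$; since $V_2^*\cong V_1$ (the module is self-dual via $\langle\,,\rangle$) and $\mathrm{Hom}_{\SO(1,n)}(V_1,V_1)=\R\,\mathrm{Id}$ by Schur again, this space is one-dimensional, spanned by $(u,v')\mapsto\langle u,v'\rangle$. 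Writing out $b(u+iv,u'+iv')$ with only the mixed blocks surviving gives $\alpha\langle u,v'\rangle+\beta\langle v,u'\rangle$; antisymmetry of $b$ forces $\beta=-\alpha$, yielding $b(u+iv,u'+iv')=\alpha(\langle u,v'\rangle-\langle v,u'\rangle)$ up to the constant $\alpha$. Finally I would record that this is exactly $\omega(u,v)=g(u,Jv)$ for $g=\mathrm{Re}\,q_0$ and $J=$ multiplication by $i$, i.e. the stated Kähler form.

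The only mildly delicate point — the ``main obstacle,'' though it is really just a point to be careful about — is the irreducibility claim for the standard representation of $\SO(1,n)$, both over $\R$ and after complexification, since $\SO(1,n)$ is noncompact and not semisimple-looking in an obvious way at $n=1$. I would handle it by noting that the real form $q_0$ on $\R^{n+1}$ is nondegenerate and $\SO(q_0)$ acts transitively on each of the sets $\{q_0>0\}$, $\{q_0<0\}$, $\{q_0=0\}\setminus\{0\}$ (Witt/transitivity on level sets), which already forbids any proper invariant subspace once $n\ge 2$; and that the commutant being $\R$ then gives irreducibility of the complexification as well. Everything else is linear algebra with Schur's lemma and self-duality, which I would not belabor.
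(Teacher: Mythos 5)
Your strategy for part 2 essentially matches the paper's: both reduce the mixed block to an equivariant endomorphism $A$ of $\R^{n+1}$ via $b(u,iv)=\langle Au,v\rangle$ (equivalently, via self-duality of the standard module) and then invoke the fact that the commutant of the standard representation is $\R\,\mathrm{Id}$. For part 1 you take a genuinely different route — Schur's lemma plus the symmetric/antisymmetric dichotomy — whereas the paper argues directly: for a timelike vector $u$, the stabilizer of $u$ acts on $\R u^{\perp}$ as $\SO(n)$ on $\R^{n}$, so the invariant linear form $b(u,\cdot)$ vanishes on $\R u^{\perp}$, hence every timelike vector lies in the kernel of $b$, and timelike vectors span. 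The paper's part 1 thus needs no commutant computation at all; your version is shorter but leans on the commutant twice instead of once.

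That is also where the one genuine gap sits. What your transitivity-on-level-sets argument proves is irreducibility over $\R$; by the real Schur lemma this only gives that the commutant is a division algebra over $\R$, i.e. $\R$, $\C$ or $\H$, and the larger possibilities must be excluded by hand. They are not excluded by real irreducibility alone: $\SO(2)$ acting on $\R^{2}$ is $\R$-irreducible with commutant $\C$, and its invariant area form is a nonzero invariant antisymmetric form — exactly the phenomenon you must rule out, and the reason the hypothesis $n>1$ is genuinely needed (for $n=1$ the statement of part 1 is false on the complexified picture's diagonal pieces only because $\SO(1,1)$ is handled separately; the same failure mode must be blocked for $\SO(1,n)$). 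Your sentence ``the commutant being $\R$ then gives irreducibility of the complexification'' assumes what has to be shown. The missing step, which the paper supplies explicitly, is one line: a hyperbolic element of $\SO(1,n)$ has a one-dimensional real eigenspace (eigenvalue $e^{t}$), so any equivariant $A$ preserves that line and hence has a real eigenvalue; the corresponding real eigenspace is a nonzero invariant subspace, hence all of $\R^{n+1}$ by $\R$-irreducibility, so $A$ is a real scalar (equivalently, $\SO(1,n)$ preserves no complex structure). With that inserted, both your part 1 and your part 2 close up correctly.
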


 \begin{proof}
 
 ${}$

 1. Let $b: \R^{n+1} \times \R^{n+1} \to \R$ be such a form, and $u$ a timelike vector, that is 
 $\langle u, u \rangle <0$. Thus, the metric on $\R u^\perp$ is positive and the action of 
 the stabilizer (in $\SO^0(1, n)$) of $u$ on it is  equivalent to the usual action of $\SO(n)$ on $\R^n$. The linear  form $v \in \R u^\perp \to  b(u, v) \in \R$ is $\SO(n)$-invariant, and hence vanishes (since its kernel is invariant, but the $\SO(n)$-action is irreducible). Thus $u$  belongs to  the kernel of $b$, and so is any timelike vector, and therefore $b= 0$

 2. Let now $b: (\R^{n+1} + i \R^{n+1}) \times (\R^{n+1} + i \R^{n+1}) \to \R$. From the previous point $b(u + i 0, v + + i0 ) = b(0 + iu, 0 + iv) = 0$. It remains to consider
 $b(u, iv)$.  It can be written $b(u, iv) = \langle u, A v \rangle$, for some $A  \in \End(\R^n)$, commuting with $\SO^0(1, n)$. By the (absolute) irreducibility of $\SO^0(1, n)$,  $A$ is scalar. (Indeed, by irreducibility, $A$ has exactly one eigenvalue $\lambda$ with 
 eigenspace the whole $\R^{n+1}$. If $\lambda$ is pure imaginary, then $\SO^0(1, n)$ 
 preserves a complex structure, but this is impossible (for instance hyperbolic elements of $\SO^0(1, n)$ have simple real eigenvalues). Thus $A$ is a real scalar).
 
  The rest of the proof follows.  
 \end{proof}

 \subsubsection*{K\"ahler form}

 \begin{fact} \label{no.3.form}

For $n>2$, there is no non-vanishing (real) exterior 3-form $\alpha $ on $\mathbb C^{n+1} (= \mathbb R^{n+1} + i \R^{n+1}$) invariant under the $\SO^0(1, n)$-action. 
\end{fact}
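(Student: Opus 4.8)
The plan is to reduce everything to the behaviour of $\SO(1,n)$ on its standard module. As a \emph{real} $\SO(1,n)$-module, $\C^{n+1}=\R^{n+1}+i\R^{n+1}$ is just two copies of the standard module $V:=\R^{1,n}$, via $z=x+iy\mapsto(x,y)$, and a real exterior $3$-form on $\C^{n+1}$ is the same thing as a totally alternating trilinear form on $V\oplus V$. Since
\[
\Lambda^3(V\oplus V)^*\ \cong\ \Lambda^3 V^*\ \oplus\ \bigl(\Lambda^2 V^*\otimes V^*\bigr)\ \oplus\ \bigl(V^*\otimes\Lambda^2 V^*\bigr)\ \oplus\ \Lambda^3 V^*
\]
as $\SO(1,n)$-modules, it suffices to prove, for $n>2$, that: (i) there is no non-zero $\SO(1,n)$-invariant $3$-form on $V$; and (ii) $\bigl(\Lambda^2 V^*\otimes V^*\bigr)^{\SO(1,n)}=0$.

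Both will follow from the device already used in Fact~\ref{invariant.levi}: contract with a timelike unit vector $u$ (so $\langle u,u\rangle<0$), whose stabilizer in $\SO(1,n)$ is $\SO(n)$, acting in the usual way on the Euclidean hyperplane $u^\perp\cong\R^n$ and trivially on $\R u$. For (i): if $\beta$ is an invariant $3$-form on $V$, then $\beta(u,\cdot,\cdot)$ is an $\SO(n)$-invariant $2$-form on the Euclidean space $V\cong\R u\oplus u^\perp$; under the $\SO(n)$-splitting $\Lambda^2 V^*\cong(u^\perp)^*\oplus\Lambda^2(u^\perp)^*$ the first summand is the standard module $\R^n$ and the second the adjoint module $\so(n)$, neither of which has a trivial summand for $n\ge 3$, so $\beta(u,\cdot,\cdot)=0$; as timelike vectors span $V$, $\beta=0$. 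For (ii): if $\gamma$ is an invariant form alternating in its first two arguments, then $(v_1,v_2)\mapsto\gamma(v_1,v_2,u)$ is an $\SO(n)$-invariant alternating $2$-form on $V$, hence $0$ by the same computation; letting $u$ range over the (spanning) set of timelike vectors forces $\gamma=0$.

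A shorter, less computational route runs the whole argument at once through the first fundamental theorem of invariant theory for $\SO(1,n)$: every invariant multilinear form on copies of $V$ is a linear combination of contractions built from the quadratic form $q_0$ (which uses two slots) and the volume form $\mathrm{vol}\in\Lambda^{n+1}V^*$ (which uses $n+1$ slots); a trilinear invariant would require $3=2a+(n+1)b$ with integers $a,b\ge 0$, impossible for $n>2$. Thus already $\bigl((V^*)^{\otimes 3}\bigr)^{\SO(1,n)}=0$, and a fortiori there is no non-zero invariant $3$-form on $\C^{n+1}$.

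There is no deep obstacle; the one point that must be honoured is the hypothesis $n>2$. It is exactly what makes $(\Lambda^2\R^n)^{\SO(n)}=0$ in the elementary argument, and what makes the volume form too long to enter a trilinear expression in the invariant-theoretic one. For $n=2$ the statement genuinely fails: on $\C^3$ the forms $dx_0\wedge dx_1\wedge dx_2$, $dy_0\wedge dy_1\wedge dy_2$ and the mixed ones (feed two vectors from one copy of $\R^3$ and one from the other into the $\SO(1,2)$ volume form) are invariant $3$-forms.
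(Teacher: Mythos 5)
Your proof is correct, and it takes a genuinely different (and cleaner) route than the paper's. You split $\Lambda^3(V\oplus V)^*$ into its four $\SO(1,n)$-summands and kill each one by contracting with a \emph{timelike} vector, whose stabilizer is the compact group $\SO(n)$; the whole argument then reduces to the elementary facts that $\R^n$ and $\Lambda^2\R^n\cong\so(n)$ have no $\SO(n)$-invariant vectors for $n\ge 3$, and that timelike vectors span $V$. The paper instead contracts with a \emph{spacelike} vector $e$, so that the residual group is the non-compact $\SO(1,n-1)$ acting on the lower-dimensional Hermite--Lorentz space $\C e^{\perp}$; it then invokes Fact~\ref{invariant.levi} to identify the contracted $2$-form with a multiple of the K\"ahler form of $\C e^\perp$, produces a vector in $\ker\alpha$, and argues via invariance of the kernel. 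Your version is self-contained (it does not lean on Fact~\ref{invariant.levi}), avoids the slightly delicate kernel analysis at the end of the paper's proof, and your second route through the first fundamental theorem for $\SO(1,n)$ (a trilinear invariant would need $3=2a+(n+1)b$) is the most economical of all and makes transparent both why $n>2$ is needed and why the statement fails for $n=2$, where the volume form of $\R^{1,2}$ and its mixed contractions supply exactly the invariant $3$-forms you list. The one point to keep explicit is that multilinear $\SO(1,n)$-invariants coincide with those of the Zariski closure $\SO(n+1,\C)$, which is what licenses the appeal to the first fundamental theorem; this is standard and your elementary first argument does not need it in any case.
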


\begin{proof}

Let $\alpha$ be such a form. Let $e \in \R^{n+1}$ be  spacelike: $\langle e, e \rangle >0$, and  consider $\alpha_e= i_e\alpha$. First, $\alpha(e, ie, z) $ is a linear form on $\C e^\perp$
invariant under a group conjugate to $\SO^0(1, n-1)$, and hence vanishes. On $\C e^\perp$, $\alpha_e$ is a 2-form as in the fact above. It then follows that for any $u \in \C e$, and $v, w \in \C u^\perp$, $\alpha (u, v, w) = \phi(u)\omega(v, w)$, where $\omega $ is the K\"ahler form on 
$\C e^\perp$, $\phi: \C e \to \R$ is a function, necessarily linear. There is $u \in \C e$ such that $\phi(u) = 0$, and hence $u \in \ker \alpha$. This kernel is a $\SO^0(1, n)$-invariant space. If it is not trivial, then it has the form $\{au + biu, u \in \R^{n+1} \}$, where $a$ and $b$ are constant. But $\alpha$ induces a form on the quotient  $\C^{n+1}/ \ker \alpha$ which vanishes for   same reasons. Hence $\alpha = 0$.

 \end{proof}

\subsubsection*{Nijenhuis tensor}

\begin{fact} \label{integrability} \label{invariant.SO}   For $n > 2$,  there is no non-trivial   anti-symmetric bilinear form $\C^{1+n} \times \C^{1+n} \to \C^{1+n}$, equivariant under $\SO^0(1, n)$.

\end{fact}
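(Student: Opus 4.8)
The plan is to translate the statement into a computation of spaces of equivariant linear maps. As a real $\SO(1,n)$-module, $\C^{1+n}$ is the diagonal module $E\oplus iE$, where $E=\R^{1,n}$ is the standard representation (the action being $A(x+iy)=Ax+iAy$). A form $N$ as in the statement is then an $\R$-bilinear, antisymmetric, $\SO(1,n)$-equivariant map $(E\oplus iE)^{2}\to E\oplus iE$. I would decompose it into its four blocks $N|_{E\times E}$, $N|_{E\times iE}$, $N|_{iE\times E}$, $N|_{iE\times iE}$ and show that each one vanishes; bilinearity then gives $N=0$. Composing with the two projections $E\oplus iE\to E$, and using the equivariant identification $iE\cong E$, every block becomes an equivariant linear map whose source is $E\otimes E$ --- or its quotient $\Lambda^{2}E$ in the two diagonal blocks, where $N$ is antisymmetric --- and whose target is $E$. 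Thus the whole Fact reduces to the two assertions $\mathrm{Hom}_{\SO(1,n)}(\Lambda^{2}E,E)=0$ and $\mathrm{Hom}_{\SO(1,n)}(E\otimes E,E)=0$, for $n\ge 3$.

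For the first, I would use the equivariant isomorphism $\Lambda^{2}E\cong\mathfrak{so}(1,n)$ (the adjoint module, via $u\wedge v\mapsto(w\mapsto\langle v,w\rangle u-\langle u,w\rangle v)$), which is $\R$-irreducible because $\mathfrak{so}(1,n)$ is simple; since $E$ is $\R$-irreducible and a nonzero equivariant map between irreducible modules is an isomorphism, while $\dim\Lambda^{2}E=\binom{n+1}{2}>n+1=\dim E$ for $n\ge 3$, no such nonzero map exists. For the second, decompose $E\otimes E=\Lambda^{2}E\oplus S^{2}_{0}E\oplus\mathbf{1}$ into the (pairwise non-isomorphic, $\R$-irreducible) adjoint, trace-free symmetric square, and trivial modules; their dimensions are $\binom{n+1}{2}$, $\binom{n+2}{2}-1$ and $1$, none equal to $n+1$ once $n\ge 3$, so every equivariant map from $E\otimes E$ to $E$ vanishes. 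Combining the two assertions kills all four blocks and finishes the proof.

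The place to be careful, and the only real subtlety, is low rank. For $n=3$ the module $\Lambda^{2}E\cong\mathfrak{so}(1,3)\cong\mathfrak{sl}_{2}(\C)$ is $\R$-irreducible but not absolutely irreducible (it is of complex type), so one must invoke Schur's lemma in the form ``a nonzero equivariant map between irreducibles is bijective'' rather than counting scalar invariants; the dimension comparison then still closes the argument. The hypothesis $n>2$ is genuinely needed: at $n=2$ one has $\Lambda^{3}E\cong\mathbf{1}$, i.e.\ the Lorentzian volume form, which produces a nonzero equivariant antisymmetric product $\R^{1,2}\times\R^{1,2}\to\R^{1,2}$ and hence a nonzero such $N$ on $\C^{1+2}$. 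If a more self-contained argument in the style of the proof of Fact~\ref{no.3.form} were preferred, one could instead fix a spacelike $e$, note that $\mathrm{Stab}(e)\cong\SO(1,n-1)$-invariance forces $N(e,ie)\in\C e$, write $N(e,ie)=\lambda(e)e+\mu(e)ie$, and kill $\lambda,\mu$ by combining homogeneity with the transitivity of $\SO(1,n)$ on pseudo-spheres (which forces $\lambda,\mu$ to be multiples of $\langle e,e\rangle^{1/2}$, impossible for the polynomial $\langle N(e,ie),e\rangle$ unless they vanish), then propagate the vanishing to all of $E\times E$, $E\times iE$ and $iE\times iE$ by analogous stabilizer computations.
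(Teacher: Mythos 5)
Your argument is correct, but it takes a genuinely different route from the paper's. The paper works bare-handedly and pointwise: it evaluates $b$ on two independent lightlike vectors $u,v\in\R^{n+1}$, notes that $b(u,v)$ is fixed by the stabilizer of the configuration $(u,v,w)$ for any spacelike $w\in\Span_\C(u,v)^\perp\cap\R^{n+1}$, so that $b(u,v)\in\C w$; the freedom in choosing $w$ (available exactly when $n\geq 3$) forces $b(u,v)=0$, and since lightlike vectors span $\R^{n+1}$ the real block vanishes, the mixed and imaginary blocks being dispatched "in the same manner''. You instead reduce the whole statement to $\mathrm{Hom}_{\SO(1,n)}(\Lambda^2E,E)=0$ and $\mathrm{Hom}_{\SO(1,n)}(E\otimes E,E)=0$ for $E=\R^{1,n}$, and settle these via the decomposition $E\otimes E=\Lambda^2E\oplus S^2_0E\oplus\mathbf{1}$ into irreducibles, Schur's lemma, and a dimension count. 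What your route buys: it treats all four blocks uniformly (in particular the mixed block $b(u,iv)$, which the paper leaves to the reader, is genuinely covered by the $E\otimes E$ computation); it makes transparent exactly where $n>2$ enters, namely $\dim\Lambda^2E=\binom{n+1}{2}=\dim E$ only for $n=2$, where the cross product of Remark \ref{remark.integrability} appears; and you rightly flag the $n=3$ subtlety that $\Lambda^2E\cong\mathfrak{so}(1,3)$ is only $\R$-irreducible, not absolutely irreducible, so Schur must be invoked in its "zero or bijective'' form rather than via scalar intertwiners. The price is that you presuppose the irreducibility of $S^2_0E$ and of the adjoint module --- standard facts, but ones the paper's stabilizer computation avoids. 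Your closing alternative in the style of Fact \ref{no.3.form} is only a sketch (the "propagation'' from the diagonal values $N(e,ie)$ to general pairs would need the same two-vector stabilizer argument the paper uses, since a bilinear map is not determined by its restriction to the diagonal without polarization, which antisymmetry obstructs here); but your main representation-theoretic argument is complete and stands on its own.
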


   \begin{proof}

   Let $b:   \C^{n+1} \times \C^{n+1} \to \C^{n+1}$ be a $\SO^0(1, n)$-invariant  anti-symmetric form.
   
   Let us first consider the restriction of $b$ to $\R^{n+1}$. 
    Let $u, v  \in \R^{n+1}$ two   linearly independent  lightlike vectors and  $w  $ in the  orthogonal  space $\Span_\C(u, v)^\perp  \cap \R^{n+1}$. Consider 
   $H$   the subgroup 
  of $A \in \SO^0(1, n)$ 
 such that there exists $\lambda \in \R$,  $A(u) = \lambda u$, $A(v) = \lambda^{-1}v$, 
  and $A(w) = w$.   By equivariance $b(u, v)$ is fixed by $H$. But  $H$ is too big; its  fixed point set   is $\C w$. 
  Indeed its action on 
  $\Span_\C(u, v, w)^\perp$ is equivalent to the action of  $\SO(n-2)$ on $\R^{n-2}$. 
 Therefore,  $b(u, v) \in \C w$. But, since $n \geq 3$, we have freedom to choose
 $w$ in $\Span_\C (u, v)^\perp \cap \R^{n+1}$. Hence, $b(u, v) = 0$. 
 Last, observe that $\R^{n+1}$ is  generated by 
  lightlike vectors and  hence $b = 0$ on $\R^{n+1}$.
  
  One can prove in the same meaner that $b(u, iv) = 0$, for $u, v \in \R^{n+1}$. It then follows that $b = 0$.

    \end{proof}

\begin{remark} \label{remark.integrability} [Dimension 3] For $n = 2$, the vector  product $\R^{2+1} \times \R^{2+1} \to \R^{2+1}$
  is anti-symmetric and $\SO^0(1, 2)$-equivariant. One can equally define a vector product 
  on $\C^{2+1}$ equivariant under $\SU(1, 2)$. For given $u, v$, $u \wedge v$ is such that $\det (w, u, v) = \langle w, u \wedge v\rangle$ (here $\langle, \rangle$ is the Hermitian product on $\C^{2+1}$). 
   Observe nevertheless  that this vector product is  not equivariant under $\U(1, 2)$.

   \end{remark}

  \subsection{Parabolic subgroups} \label{parabolic} By definition, a maximal parabolic subgroup of $\SU(1, n)$ is the stabilizer of a lightlike direction. It is  convenient here to consider the form $q_1=  z_0 \bar{z_n} + \bar{z_0} z_n + |z_1|^2 + \ldots + | z_{n-1}|^2$. Thus, $e_0$
  is lightlike  and the stabilizer  $P^\prime$ of $\C e_0$  in $\U(1, n)$ consists of elements of the 
   form: 
   \[  \begin{pmatrix} \begin{matrix} a & 0& 0\\0&A&0\\0&0& \bar{a}^{-1}\end{matrix}
 \end{pmatrix}\]
where, $a \in \C^*$, $A \in \U(n-1)$, $\a \bar{a}^{-1} \det A = 1$.
 Thus $P$    is a semi-direct product 
  $S(\C^* \times \U(n-1)) \ltimes \sf{Heis}$. 
  
  The stabilizer $P$ of $\C e_0$ in $\SU(1, n)$ is $P^\prime \cap \SU(1, n) = S((\C^* \times \U(n-1)) \ltimes \sf{Heis})$

The   Heisenberg group is the unipotent radical of $P$ and consists of: 
 \[ \begin{pmatrix} \begin{matrix} 1& t&-\tfrac{\parallel t \parallel ^2}2 + is\\0&1&-\bar{t}\\0&0& 1\end{matrix}
 \end{pmatrix}\]
where $t \in \C^{n-1}$ and $ s \in \R$. 

We see in particular that $P$ is {\bf amenable}. Recall here that a topological group is amenable if  any  continuous action of it on
a compact metric space, preserves some probability measure.  For instance,  as it will  seen in \S \ref{sousgroupes},   our group $P$ will act on the boundary at infinity of the complex hyperbolic space and preserves there a Dirac measure. (In general, a Lie group is amenable iff its semi-simple part is compact).

  \subsection{Lightlike geodesic hypersurfaces} \label{light.geodesic} We will meet (especially in \S \ref{asymptotic}) special complex hypersurfaces $L \subset M$. We say that $L$ is {\bf lightlike} if for any $y \in L$, $T_yL $ is a lightlike complex hyperplane of $(T_yM, g_y)$. The kernel of $(T_yL, g_y) $ defines 
  a complex line sub-bundle $N$ of $TL$ (not necessarily holomorphic). The metric on 
  $TL/N$ is positive.
  
  We say that $L$ is (totally) {\bf geodesic} if for any $u\in TL$, the geodesic $\gamma_u$ tangent to $u$, is locally contained in $L$ (there exists $\epsilon$, such that $\gamma_u(]-\epsilon, + \epsilon[) \subset L)$. This is equivalent to invariance of $TL$ by the Levi-Civita connection;  if $X$ and $Y$ are vector fields defined in a neighbourhood of $L$, and are tangent to $L$ (i.e. $X(y), Y(y) \in TL$, for $y \in L$), then $\nabla_X Y(y) \in TL$, for $y \in L$.

Let us prove in this case that $N$ is parallel along $L$ and thus it is in particular integrable.  For this,  consider  three  vector fields $X, Y$ and $Z$ tangent to $L$, with  $X$  tangent to $N$.   
We have $\langle X, Z \rangle = 0$, and 
thus $0 = Y\langle X, Z \rangle = \langle \nabla_YX, Z \rangle + \langle X, \nabla_YZ \rangle= \langle \nabla_YX, Z \rangle$ (since $X$ is tangent to $N$). This is true for any $Z
$, and therefore  $\nabla_YX$ is tangent to $N$, which means that $N$ is a parallel 2-plane field. 

Denote by ${\mathcal N}$ the so defined foliation of $L$. The leaves are complex curves. Transversally, $\mathcal N$ is a {\bf Riemannian foliation}, that is,  there is a well defined projected 
Riemannian metric  on the leaf (local) quotient space $Q = L /{\mathcal N}$. Equivalently, 
the Lie derivative $L_Xh = 0$, where $h$ is the metric restricted to $L$ and $X$ is tangent to $N$. This is turn is equivalent to that, for any $Y$ invariant under the $X$-flow, i.e. $[X, Y] = 0$, the product $\langle Y, Y \rangle$ is $X$-invariant. 
To check  this, observe that $X  \langle Y, Y \rangle = \langle \nabla_XY, Y \rangle =
\langle \nabla_YX, Y \rangle= 0 $, since as we have just proved, $N$ is parallel (that is 
$\nabla_Y X$ is tangent to $N$).

\begin{corollary} \label{preservation} Let $f$ be an isometry of $M$ preserving $L$ and fixing a point $x\in L$. Assume 
$D_xf \in \GL(T_xM)$ is unipotent (i.e. $D_xf-Id$ is nilpotent). Then $f$ preserves (individually) each  leaf of $\mathcal N$.  

\end{corollary}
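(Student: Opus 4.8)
The plan is to push $f$ down to the local leaf space of $\mathcal N$ and to observe there that it must be trivial. Recall from \S\ref{light.geodesic} that $N$ is the kernel of $g$ restricted to $TL$, that $\mathcal N$ is the foliation it integrates, and that $\mathcal N$ is a Riemannian foliation carrying a well-defined positive-definite transverse metric; write $Q = L/\mathcal N$ for the (local) leaf space near $x$, a Riemannian manifold, and $\bar x\in Q$ for the point represented by the leaf through $x$.

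First I would record that $f$ preserves $N$. Since $f$ is an isometry with $f(L)=L$, the differential $D_yf$ maps $T_yL$ isometrically onto $T_{f(y)}L$, hence it maps $\ker(g|_{T_yL})=N_y$ onto $N_{f(y)}$, because $N$ is intrinsically determined by the pair $(TL,g)$. Therefore $f$ carries leaves of $\mathcal N$ to leaves and, working in a distinguished neighborhood of $x$, it descends to a local isometry $\bar f$ of $Q$ for the transverse metric. As $f(x)=x$, we get $\bar f(\bar x)=\bar x$.

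Next I would compute $D_{\bar x}\bar f$, which is precisely the automorphism of $T_{\bar x}Q = T_xL/N_x$ induced by $D_xf$ on passing to the quotient. Since $D_xf$ is unipotent and stabilizes $T_xL$ and $N_x$, this induced map is unipotent; but it is also orthogonal for the positive-definite transverse metric, and a unipotent orthogonal transformation of a Euclidean space is the identity (an orthogonal matrix is semisimple with all eigenvalues on the unit circle, so unipotency forces it to be $\mathrm{Id}$). Hence $D_{\bar x}\bar f=\mathrm{Id}$. A local isometry of a connected Riemannian manifold that fixes a point and has trivial differential there is the identity (it commutes with $\exp_{\bar x}$), so $\bar f=\mathrm{id}$; equivalently $f$ maps each leaf of $\mathcal N$ to itself.

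The argument is short, and I do not expect a genuine obstacle. The only point requiring a little care is the passage to the leaf space, which is merely local; but everything this needs — that $N$ is parallel (hence integrable) and that the quotient carries an honest Riemannian metric — has already been established in \S\ref{light.geodesic}, and the remaining ingredient is just the elementary observation that orthogonal together with unipotent forces the identity.
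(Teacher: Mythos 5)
Your argument is correct and is essentially identical to the paper's: both descend $f$ to the local leaf space $Q=L/\mathcal N$ with its projected Riemannian metric, note that the induced differential at $\bar x$ is both unipotent and orthogonal hence the identity, and conclude that the induced isometry of $Q$ is trivial. You merely spell out a few more of the routine verifications (that $N$ is intrinsically preserved and that the induced map on $T_xL/N_x$ is unipotent), which the paper leaves implicit.
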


\begin{proof} $f$ acts as an isometry $\hat{f}$ of  the (local) quotient space $L/ {\mathcal N}$  endowed  with its projected   Riemannian metric.  The   derivative $D{\hat{x}}\hat{f}$
at the projection of $x$ is unipotent. But the orthogonal group
$O(n)$ contains no non-trivial unipotent elements. Therefore, 
$D_{\hat{x}}\hat{f} = Id_{T_{\hat{x}} Q}$, and hence as a Riemannian isometry, $\hat{f} = {Id}_Q$ (of course, we are tacitly assuming   everything   connected).

\end{proof}

 \section{Subgroups of $\U(1, n)$}
 \label{sousgroupes}
 
 The following proposition says roughly that,   up to compact objects, a subgroup  of $\SU(1, n)$ 
 is either contained in a parabolic group, or conjugate to one of the standard subgroups    $\SO^0(1, k)$ or 
 $\SU(1, k)$.

 \begin{proposition} \label{subgroups} Let $H$ be a {\bf non-precompact connected} Lie  subgroup of    $\SU(1, n)$ (i.e. its closure in not compact). Then:
 \begin{enumerate}
 
 \item
  $H$ is amenable iff it preserves a lightlike hyperplane (that is, by definition,  $H$ is contained in a maximal parabolic subgroup).

\item  In opposite, if $H$  acts $\C$-irreducibly on $\mathbb C^{n+1}$ (there is no 
non-trivial complex invariant subspace), then $H$   equals $\SO^0(1, n)$ or $\SU(1, n)$ (See 
also \cite{D-L}).

\item  In the  general (intermediate) case, when  $H$ is not amenable,  it  acts $\R$-irreducibly on some    non-trivial  subspace $E$, such that:

 \begin{enumerate} 
\item    Either  $E$ is totally real, and up to a conjugacy in $SU(1, n)$, $E = \R^{k+1} \subset \R^{n+1}Ê\subset \R^{n+1} + i \R^{n+1} = \C^{n+1}$, and up to finite index, $H$ is a product   $C \times SO(1, k)$, for $C$ a pre-compact subgroup acting trivially on $E$.

\item  or $E$ is a complex subspace, and up to   conjugacy in $\SU(1, n)$, $E = \C^{k+1}$, and $H$  is  $C \times \SU(1, k)$, where $C$ is as previously.

 . 
  
 \end{enumerate}
  
 \end{enumerate}

 \end{proposition}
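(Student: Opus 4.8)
The plan is to organize the argument around the structure theory of $H$, distinguishing the reductive from the non-reductive case, and to exploit the ``small'' rank of $\SU(1,n)$ (real rank one). First I would record the elementary fact that $\SU(1,n)$ acts on the boundary $\partial\H^n(\C) = \Sphere^{2n-1}$ and on the closed ball $\overline{\H^n(\C)}$, and that a connected subgroup $H$ is precompact iff it fixes a point of $\H^n(\C)$ (Cartan fixed-point theorem for the nonpositively curved symmetric space $\H^n(\C)$). So non-precompactness means $H$ has no fixed point in $\H^n(\C)$; the closure of an orbit then meets the boundary sphere, and one analyses the induced action there.

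For part (1): if $H$ is amenable, then by amenability it fixes a probability measure on the compact space $\partial \H^n(\C)$, hence (since $H$ is non-precompact, so the measure cannot be supported on a compact-stabilizer configuration) one shows the measure is supported on one or two points; a non-precompact amenable group then must actually fix a boundary point, i.e.\ a lightlike direction, so $H$ lies in a maximal parabolic $P$. I would cite the explicit description of $P = (\C^* \times \SU(n-1)) \ltimes \sf{Heis}$ from \S\ref{parabolic}, which is visibly amenable (solvable-by-compact), giving the converse: any subgroup of a parabolic is amenable. The slightly delicate point is ruling out that the invariant measure has two atoms with $H$ swapping them — that would force $H$ precompact modulo the stabilizer of a geodesic, whose non-compact part is $\R$ (diagonalizable), and this still normalizes a lightlike direction after passing to an index-two subgroup; since $H$ is connected this case does not even arise.

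For part (2): suppose $H$ is $\C$-irreducible. Then $H$ is not amenable (an amenable $H$ would lie in parabolic $P$, which preserves a lightlike hyperplane, contradicting irreducibility), so by the Levi decomposition $H$ is (up to compact factors and center) semisimple and non-compact. A $\C$-irreducible reductive subgroup of $\U(1,n)$ is (modulo scalars) semisimple; one then invokes the classification of irreducible subgroups $H \subset \U(1,n)$ preserving a Hermitian form of Lorentz signature. The key is that $H$ must still have real rank one (a higher-rank simple group cannot act irreducibly while preserving a form of signature $(1,n)$, since a maximal $\R$-split torus would have to act with all weight spaces isotropic except one line — incompatible with rank $\geq 2$ by a weight-counting argument). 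Among real-rank-one groups, preserving such a Hermitian form and acting irreducibly on $\C^{n+1}$ pins $H$ down to $\SO(1,n)$ (acting on $\R^{n+1}$, complexified) or $\SU(1,k)$ with $k=n$; the intermediate $\SU(1,k) \subset \U(1,n)$ for $k<n$ is reducible, and $\Sp$, $F_4^{-20}$ type representations do not carry a Lorentz-Hermitian invariant form in the right dimension. I would present this as a direct weight/Cartan-involution argument rather than quoting a table.

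For part (3): if $H$ is non-amenable but not $\C$-irreducible, decompose $\C^{n+1}$ into $H$-invariant pieces. Since the form $q_0$ has signature $(1,n)$, at most one irreducible summand $E$ can be non-positive-definite; all others are positive-definite, so $H$ acts on them precompactly. Thus the non-precompact part of $H$ acts on a single summand $E$ carrying either a Lorentz-Hermitian form (reduce to part (2) applied inside $\U(1,k)$, giving $\SU(1,k)$) or, if $E$ is not $\C$-irreducible but only $\R$-irreducible, $E$ is the complexification of a real form on which $H$ acts via $\SO(1,k)$ (this is the ``totally real'' alternative, and the signature constraint forces the real form to be Lorentzian). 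The main obstacle I anticipate is exactly this last bookkeeping: showing that a non-precompact $H$ preserving a decomposition must have \emph{all} its noncompactness concentrated on one totally-real-or-complex summand on which it acts as the full $\SO(1,k)$ or $\SU(1,k)$, with the complement handled by a compact factor $C$ — i.e., ruling out that $H$ is a non-split extension mixing the summands. This is handled by noting the compact summands give a homomorphism $H \to (\text{compact})$ whose kernel contains the non-precompact part, and then applying part (2) to the action on $E$.
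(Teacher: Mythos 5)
Your part (1) is correct and takes a mildly different route from the paper: you fix a probability measure on $\partial_\infty\H^n(\C)$ by amenability and argue it must be supported on at most two points, whereas the paper works with the classification of isometries of $\H^n(\C)$ into elliptic/parabolic/hyperbolic and the structure of amenable Lie groups (compact-by-solvable), showing a non-precompact solvable normal subgroup already fixes a boundary point. Both are standard and both work; connectedness disposes of the two-point ambiguity either way. Your reduction in part (3) to a single ``Lorentz'' summand, with the positive-definite summands forcing precompact images and the simple factor landing in the kernel of the map to the compact part, is also essentially the paper's endgame (you should, however, say explicitly why complete reducibility holds: a non-amenable $H$ cannot preserve a degenerate subspace, since the radical of such a subspace would be an invariant lightlike line, putting $H$ inside a parabolic).

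The genuine gap is in part (2), on which part (3) also depends. After reducing to $H$ semisimple of real rank one (and note that the rank bound is automatic by monotonicity of real rank under inclusion of algebraic subgroups --- no weight-counting is needed there), the actual content of the statement is: \emph{which irreducible representations of which rank-one simple groups carry an invariant Hermitian form of signature $(1,n)$?} Your proposal answers this by appeal to ``the classification'' and by asserting that $\Sp(1,m)$, $F_4^{-20}$, etc.\ ``do not carry a Lorentz-Hermitian invariant form in the right dimension.'' But the serious candidates are not only other groups; they are other irreducible representations of the allowed groups (e.g.\ symmetric powers of the standard representation of $\SL_2(\R)\cong\SU(1,1)$, whose invariant forms have signatures that must be computed case by case). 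Nothing in the proposal rules these out. The paper's proof hinges on exactly one tool that is absent from yours: Mostow's theorem that a Cartan decomposition of the subalgebra $\s$ can be chosen compatible with that of $\su(1,n)$, so that $\s = (\s\cap\p)\oplus(\s\cap\k)$ with $\s\cap\p\subset\p\cong\C^n$. Then $\s\cap\p$ is either complex or totally real (its intersection with $i(\s\cap\p)$ is invariant under the irreducible $S\cap K$-action), $K$ acts transitively on such subspaces of a given dimension, and $\s\cap\p$ determines $\s$ via $\s\cap\k=[\s\cap\p,\s\cap\p]$; geometrically, the totally geodesic symmetric subspaces of $\H^n(\C)$ are exactly the $\H^k(\R)$ and $\H^k(\C)$, whence $S$ is conjugate to $\SO(1,k)$ or $\SU(1,k)$ and irreducibility forces $k=n$. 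Without this (or an equivalent explicit signature computation for each candidate representation), part (2) is an assertion rather than a proof.
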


 \begin{remark} As it will be seen from its proof, this classification naturally generalizes to connected subgroups of all simple Lie groups of rank 1.  The proof uses essentially one  standard result from  simple Lie groups theory,
  du to Karpelevich \cite{Kar}  and Mostow \cite{Mos}. It states that a given Cartan decomposition of a Lie subgroup extends to a Cartan decomposition of the ambient simple Lie group.  An essentially geometric (algebraic free) approach is also available in the case of $\SO^0(1, n)$, see \cite{BoZ, DO}.
 
 --  Observe finally that we do not assume $H$ to be closed.
 
 \end{remark}
 
 \begin{proof} Let $H \subset \SU(1, n)$ be as in the proposition.

\subsubsection{Hyperbolicity} Let $G = \SU(1, n)$, $K = \U(1, n-1)$ and consider $X = G/K = \H^n(\C)$ the associated Riemannian symmetric space.  We let $\SU(1, n)$ act on 
  the (visual Hadamard)  boundary $\partial_\infty X$,  
  which  is identified to the space of  complex   lightlike directions of  $\C^{1+n}$. (See 
  \cite{Goldman} to learn about the geometry of $\H^n(\C)$).

By definition,   a maximal parabolic subgroup $P$ is the stabilizer of a lightlike direction, or equivalently a point of $\partial_\infty X$. 
From \S \ref{parabolic}, $P$ is amenable (the fact that maximal  parabolic groups are amenable characterizes rank 1 groups). Therefore, any group fixing a point at $\partial_\infty X$ is amenable.

We have to prove conversely that a non-precompact connected  amenable group fixes some point at $\partial_\infty X$. 

Recall that elements of $\SU(1, n)$ are classified 
into elliptic, parabolic or hyperbolic. 

An isometry   is elliptic if it  fixes some point in $X$,  and thus  lies in  its stabilizer which is a compact subgroup of $\SU(1, n)$. 
Conversely, any compact subgroup of $\SU(1, n)$ fixes some point of $X$. Thus an element is elliptic iff it generates a precompact subgroup.

A parabolic element has exactly one fixed point in $\partial_\infty X$, and a hyperbolic one has two fixed points. Furthermore, in both cases, by iteration, all points of $\partial_\infty X$ tend to these fixed points. In particular, in both cases, any invariant measure has support a set $F$ with 
cardinality  $\# (F) \leq 2$.

Now, $H$ is amenable and hence leaves invariant  a probability measure $\nu$  on $\partial_\infty X$. 

If $H$ contains a parabolic or a hyperbolic element, then the support of $\nu$  consists  of a set $F$ of one or two points, and hence, $H$ preserves  such $F$.
  If $F$ has cardinality 2, since $H$ is assumed to be connected, it fixes  each of the points of $F$. Therefore, in all cases, $H$ has a fixed point in $\partial_\infty  X$.

In order to prove that indeed $H$ contains a parabolic or hyperbolic element, one uses the classical fact that a non-compact connected Lie group
contains some non-precompact one parameter group (see for instance \cite{D'A} for a proof of this). No element of such a one parameter group can be elliptic.

This completes the proof of (1) in the proposition.  (Actually, a more self-contained proof, say without using this fact on non-compact Lie groups,  is available, but needs more details!).

\subsubsection{Non-amenable case}
  $H$  is  a semi-direct product $(S \times C) \ltimes R$ (up to  finite index) where $S$ is semi-simple with no compact factor, $C$ is compact semi-simple, and $R$ is the (solvable) radical.  Observe that   $R$ must be  precompact. Indeed, if not, from the above proof, the fixed point set $F$ of $R$ in 
  $\partial_\infty X$ has cardinality 1 or 2.  Since $R$ is  a normal subgroup of 
  $H$, $F$ is preserved by $H$. By connectedness,  $H$  fixes  each of the points of $F$, hence $H$ is contained in a parabolic group, and is thus amenable.

  This implies that the semi-direct product is in fact direct, up to finite index ($H$ acts by conjugacy on the the compact torus $\bar{R}$. But the identity component of the automorphism group of a torus is trivial). Let us say that $H$ is a product $S \times C^\prime$ where $C^\prime$ is precompact.  We now investigate $S$ and come back later on  to $S \times C^\prime$. Observe first that $S$ is simple. Indeed, if $S = S_1 \times S_2$, then any non-elliptic $f \in S_2$, will centralize $S_1$, which implies $S_1$ has a fixed point at $\partial_\infty X$ and hence amenable.

\subsubsection{Simple Lie subgroups}  In order to understand the geometry of $S$, we investigate the symmetric space $X$ (rather than its boundary as in the previous  case).
Let $p$ be a base point, say that fixed by   the maximal compact $K$. We have a Cartan decomposition of the Lie algebra of $G$: $\g = \p \oplus  \k$, where $\k$ is the Lie subalgebra of $K$, and $\p$ is (the unique) 
 $K$-invariant  supplementary  space of $\k$ in $\g$.  Geometrically, $\p$  is identified with 
 $T_pM$, and if $u \in \p$, the orbit $\exp (tu).p$ is the geodesic of $X$ determined by $u$. 
 
 More other properties are: $K$ acts irreducibly on $\p$, and $[\p, \p]Ê= \k$. 
 
 If $S \subset G$ is a simple Lie subgroup, then Karpelevich-Mostow's theorem \cite{Kar, Mos} states, up to a conjugacy 
 in $G$ (or equivalently a modification of the base point), we get a Cartan decomposition by taking intersection: $\s = \s \cap \p \oplus \s \cap \k$. 
 
 Observe that $\s \cap \p$ determines
 $\s$, since $\s \cap \k = [\s \cap \p, \s \cap \p]$. 
 
 In our case, $\p = T_p X$ is identified to $\C^n$. The subspace $ E= \s \cap \p$ is either complex or totally real, since $ E \cap i E$ is $S\cap K$-invariant, and this last group acts irreducibly on 
 $E$. Now, $K = \U(n)$ acts transitively on the set of totally real (resp. complex) planes of a given dimension $k$. Thus, up to conjugacy, $\s\cap \p$ is the canonical $\R^{k+1}$
 or $\C^{k+1}$ in $\C^n$.  Candidate for $\s$ in these cases are the Lie algebras of the standard subgroups $\SO^0(1, k)$ or $\SU(1, k)$, respectively.  But 
since $\s\cap \p $ determines completely $\s$, there are the unique possibilities.


 \subsubsection*{End} We have thus proved (2) and (3) of the proposition  at the group level: $H$ is conjugate 
 in $\SU(1, n)$ to $S \times C$, with $S= \SO^0(1, k)$ or $\SU(1, k)$. Since the  precompact factor 
 $C$ commutes with the non-compact $S$,  it is contained in $\SO(n-k)$ or $\SU(n-k)$, respectively. In particular $H$ preserves $\R^{k+1}$ or $\C^{k+1}$. This completes the proof of the proposition.   
 \end{proof}

 \begin{corollary} Let $L$ be a  subgroup of   $\SU(1, n)$ (not necessarily connected or closed) acting $\C$-irreducibly on $\C^{n+1}$. Then,   the identity component of its Zariski closure  equals $\SU(1, n)$ or  $\SO^0(1, n)$. 
 If the identity component $L^0$ is not trivial,  
 $L$ itself equals $\SU(1, n)$ or $\SO^0(1, n)$.

 \end{corollary}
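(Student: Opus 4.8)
The plan is to pass to the Zariski closure of $L$ and to use systematically that a subgroup or subspace \emph{canonically attached} to a normal subgroup of an irreducibly acting group is itself invariant. Write $\bar{L}\subseteq\SU(1,n)$ for the Zariski closure of $L$ and $\bar{L}^{0}$ for its identity component; since $L\subseteq\bar{L}$, the group $\bar{L}$ still acts $\C$-irreducibly. No precompact subgroup of $\SU(1,n)$ acts $\C$-irreducibly, because its closure lies in a maximal compact subgroup, and such a group is isomorphic to $\U(n)$ and preserves the splitting $\C e_{0}\oplus(\C e_{0})^{\perp}$ of $\C^{n+1}$ into the timelike line and its positive-definite orthogonal; hence $\bar{L}$, and with it the finite-index subgroup $\bar{L}^{0}$, is non-precompact. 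The first substantive point is that $\bar{L}^{0}$ is \emph{reductive}: the socle of $\C^{n+1}$ as an $\bar{L}^{0}$-module is stable under everything normalizing $\bar{L}^{0}$, in particular under $\bar{L}$, so by $\C$-irreducibility it is all of $\C^{n+1}$; thus $\C^{n+1}$ is a faithful semisimple $\bar{L}^{0}$-module, which forces the connected algebraic group $\bar{L}^{0}$ to be reductive.

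Then I would apply Proposition~\ref{subgroups} to the connected non-precompact group $\bar{L}^{0}$, using throughout that $\bar{L}$ normalizes it. If $\bar{L}^{0}$ is amenable, then being reductive it is a product $C\cdot A$ with $C$ precompact and $A$ a one-dimensional central $\R$-split torus (the real rank of $\SU(1,n)$ being $1$); then $A$ is the maximal $\R$-split subtorus of the centre of $\bar{L}^{0}$, hence canonical, so $\bar{L}$ preserves the pair of fixed points of $A$ in $\partial_{\infty}\H^{n}(\C)$ and therefore the complex hyperbolic plane $\Pi$ they span. For $n\ge 2$, $\Pi$ is a proper nonzero $\bar{L}$-invariant subspace, contradicting irreducibility. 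If $\bar{L}^{0}$ is not amenable, Proposition~\ref{subgroups}(3) gives $\bar{L}^{0}=C\times S$ with $C$ precompact and $S=\SO(1,k)$ or $\SU(1,k)$; the factor $S$ is the sum of the non-compact simple ideals of $\mathrm{Lie}(\bar{L}^{0})$, hence characteristic, hence normalized by $\bar{L}$. Since $\C^{n+1}$ restricted to $S$ is the absolutely irreducible standard module on $\C^{k+1}$ plus a trivial module, $\C^{k+1}$ is the unique $S$-submodule of that dimension carrying a nontrivial action, so it is $\bar{L}$-invariant; irreducibility forces $k=n$, and then the precompact $C$ commutes with $S\in\{\SO(1,n),\SU(1,n)\}$, which acts $\C$-irreducibly, so $C$ is scalar, hence trivial, and $\bar{L}^{0}=\SO(1,n)$ or $\SU(1,n)$. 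The remaining possibility, $\bar{L}^{0}$ acting $\C$-irreducibly, is exactly Proposition~\ref{subgroups}(2). This proves the first assertion.

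For the second assertion, suppose $L^{0}\ne 1$. Then $L^{0}$ is non-precompact: otherwise its closure would be a compact subgroup normalized by $L$, hence by $\bar{L}$, and its identity component, being a connected normal subgroup of the simple group $\bar{L}^{0}\in\{\SO(1,n),\SU(1,n)\}$, would be trivial, forcing $L^{0}=1$. So the closure $\overline{L^{0}}$ is a closed connected non-precompact subgroup, still normalized by $L$, and running the argument of the previous two paragraphs on it gives $\overline{L^{0}}=\SO(1,n)$ or $\SU(1,n)$. Since $\mathrm{Ad}(L^{0})$ preserves $\mathrm{Lie}(L^{0})$, so does $\mathrm{Ad}(\overline{L^{0}})$, whence $\mathrm{Lie}(L^{0})$ is a nonzero ideal of this simple Lie algebra, hence the whole of it; therefore $L^{0}$ already equals $\SO(1,n)$ or $\SU(1,n)$, and so does $L$ itself (at worst up to the finite centre).

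The step I expect to be the main obstacle is the amenable case: without the reductivity reduction an amenable $\bar{L}^{0}$ could be a complicated non-reductive subgroup of the parabolic $(\C^{*}\times\SU(n-1))\ltimes\sf{Heis}$, and one would have to check by hand that its fixed lightlike directions still span a proper subspace; the socle argument is what makes the treatment uniform. A secondary, essentially bookkeeping, difficulty is that Proposition~\ref{subgroups} is phrased for connected groups, so one must keep track of the various ``identity components'' (real versus Zariski) and of topological versus Zariski closures when transferring conclusions among $L$, $L^{0}$, $\bar{L}$ and $\bar{L}^{0}$.
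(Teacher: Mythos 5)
Your argument is correct and follows the route the paper intends: the paper states this corollary without proof as an immediate consequence of Proposition~\ref{subgroups}, and the one ingredient it does spell out elsewhere (in the proof of Corollary~\ref{Zariski}) is exactly your key device, namely that the Zariski closure of $L$ normalizes every subgroup canonically attached to $L^0$, so that irreducibility of $L$ propagates down to rule out the amenable and intermediate cases. Your socle/reductivity step and the canonical-split-torus argument are legitimate ways of filling in the details the paper omits; the only loosenesses (the $n=1$ edge case, where the ``contradiction'' in the amenable case is in fact the allowed conclusion $\SO(1,1)$, and the ``up to finite centre'' caveat in the last sentence) are shared with the paper's own statement and are immaterial in the range $\dim_{\C}M>3$ where the corollary is used.
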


 \begin{proof}  Let $L^{Zar}$ be the Zariski closure of $L$. It is a closed subgroup of $\SU(1, n)$ with  finitely many connected components. It is non-compact, since otherwise $L$ will be precompact and  can not  act irreducibly. 
 
 The identity component $H$ of $L^{Zar}$ is non-precompact too.
  If $H$ is amenable, then its fixed point set $F$ in $\partial_\infty X$ is preserved by $L$, since $L$ normalizes $H$. If $F$ consists of one point,  then $L$ fixes it, seen as a lightlike direction in $\C^{1+n}$ contradicting the fact that it acts irreducibly. 
  
  If $F$ consists of two lightlike directions, then $L$ preserves the (timelike) 2-plane that they generate in $\C^{1+n}$, again contradicting its irreducibility. 
  
  We infer from this that $H$ is not amenable. Apply Proposition \ref{subgroups} to get that $H$ is essentially $\SO^0(1, k)$ or $\SU(1, k)$. Our group $L$ itself is then contained in the normalizer of one of these groups. On easily sees that such a normalizer can not act 
  irreducibly unless $k = n$ (for instance the normalizer of $\SO^0(1, k)$ or 
  $\SU(1, k)$  preserves the space  of their  fixed vectors which is non-trivial for $k <n$). .
  \end{proof}
  
 We have more:
 
 \begin{corollary} \label{Zariski} Let $L$ be a  subgroup of   $\SU(1, n)$. 
If $L$ is non-amenable, then its Zariski closure contains a copy of $\SO^0(1,  k)$ or $\SU(1, k)$ for some $k >0$. If furthermore $L^0$ is non- pre-compact, then $L$ itself  contains $\SO^0(1, k)$
or $\SU(1, k)$. (Of course $\SO^0(1, k) \subset \SU(1, k)$, but we prefer our formulation here for a later use ). 
 
 \end{corollary}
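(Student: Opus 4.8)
The plan is to derive both assertions from Proposition \ref{subgroups}, applied to suitable connected subgroups, together with the permanence properties of amenability (subgroups and finite extensions of amenable groups are amenable).

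\emph{First assertion.} Let $\bar L\subset\SU(1,n)$ be the Zariski closure of $L$ and $\bar L^0$ its identity component for the Euclidean topology; then $\bar L^0$ is a closed connected Lie subgroup of finite index in the closed subgroup $\bar L$. If $\bar L^0$ were amenable, then $\bar L$, being a finite extension of it, would be amenable, and hence so would be $L\subseteq\bar L$ — contrary to hypothesis. So $\bar L^0$ is a non-amenable connected Lie subgroup of $\SU(1,n)$, and in particular it is not precompact. Proposition \ref{subgroups} (cases (2) and (3)) applied to $H=\bar L^0$ then shows that $\bar L^0$ contains a conjugate of $\SO(1,k)$ or $\SU(1,k)$ for some $k>0$; a fortiori so does $\bar L$.

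\emph{Second assertion.} Assume moreover that $L^0$ is not precompact, and suppose first, for a contradiction, that $L^0$ is amenable. By Proposition \ref{subgroups}(1) and its proof, the non-precompact amenable group $L^0$ fixes a point of $\partial_\infty X=\partial_\infty\H^n(\C)$ and contains a non-elliptic element $f$ (its solvable radical is non-precompact); since $\mathrm{Fix}_{\partial_\infty X}(L^0)\subseteq\mathrm{Fix}_{\partial_\infty X}(f)$ and a non-elliptic isometry of $\H^n(\C)$ has exactly one or two fixed points at infinity, the set $F:=\mathrm{Fix}_{\partial_\infty X}(L^0)$ is non-empty and has at most two elements. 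As $L^0$ is normal in $L$, the group $L$ permutes $F$; hence the kernel $L'$ of this permutation action has index at most $2$ in $L$ and fixes some $\xi\in F$, so $L'\subseteq P_\xi$, the (amenable) maximal parabolic stabilizing $\xi$, and therefore $L$ is amenable — a contradiction. Thus $L^0$ is non-amenable, hence not precompact, and Proposition \ref{subgroups} (cases (2)--(3)) applied to $H=L^0$ gives that $L^0$, and so $L$, contains a conjugate of $\SO(1,k)$ or $\SU(1,k)$ with $k>0$.

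\emph{Main obstacle.} The only step that is not a formal consequence of Proposition \ref{subgroups} is the exclusion, in the second assertion, of a non-precompact \emph{amenable} $L^0$ inside a non-amenable $L$. The argument above rests on the fact — already used inside the proof of Proposition \ref{subgroups}(1) — that such an $L^0$ contains a non-elliptic element, which confines $\mathrm{Fix}_{\partial_\infty X}(L^0)$ to at most two points and lets the normalizing action of $L$ collapse everything into a single parabolic subgroup. One should make sure this ingredient is stated in a form one is entitled to quote; otherwise one records it as a short separate lemma (a non-precompact connected solvable subgroup of $\SU(1,n)$ has a non-semisimple or non-unit-modulus eigenvalue, hence a parabolic or hyperbolic element).
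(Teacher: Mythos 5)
Your proof is correct and follows essentially the same route as the paper: both reduce everything to Proposition \ref{subgroups} and identify the only genuine issue as showing that a non-precompact $L^0$ inside a non-amenable $L$ cannot itself be amenable. The paper settles that point by observing that the Zariski closure of $L$ (which contains $\SO(1,k)$ or $\SU(1,k)$ by the first assertion) normalizes $L^0$, and no such group normalizes a non-compact connected amenable subgroup; your variant --- letting $L$ permute the at most two boundary fixed points of $L^0$ and concluding that $L$ would then be virtually contained in a parabolic, hence amenable --- is the same normalization idea carried out directly.
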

 
 \begin{proof}  The first part is obvious.
 
 For the second one,   it suffices  to show that 
 $L^0$ is non-amenable.  But this $L^0$ is normalized by 
 $L^{Zar}$ ($L$ itself normalizes  $L^0$ and by algebraicity, $L^{Zar}$ too preserves it).
But $\SO^0(1, k)$ or $\SU(1, k)$  can normalize no amenable non-compact connected subgroup of $\SU(1, n)$. 
 \end{proof}

\subsection{Subgroups of $\U(1, n)$} \label{subgroups.U} 

We will now deal with    subgroups $L$ of  of $\U(1, n) = \U(1) \times \SU(1, n)$.  The following lemma 
will help to understand them and maybe has its own interest:

\begin{lemma} Let $L$ be a subgroup of $\mathsf{U}(1, n)$.


1) If  $L$ is non-precompact, then:

(i)  either $L$ preserves a unique lightlike direction

(ii) or $L$ preserves a unique timelike  (i.e. on which the restriction of $q_0$ on it is of Hermite-Lorentz type) 2-plane, and also
each of the two lightlike directions inside it.

(iii) or $L$ preserves a unique timelike subspace on which it acts $\C$-irreducibly.

In all cases, this lightlike direction, or timelike subspace are invariant under the normalizer of $L$ in $\U(1, n)$. 

2) $L$ acts irreducibly iff $L \cap \mathsf{SU}(1, n)$ acts irreducibly.

\end{lemma}

\begin{proof} 
${}$

1) If $L$ acts $\mathbb C$-irreducibly, then we are done. So, assume it preserves some  proper subspace $E$, and thus also $E^\perp$. 
If $E$ is degenerate, then $E \cap E^\perp$ is an invariant lightlike direction. If $E$ is spacelike then $E^\perp$ is timelike, and vice versa.

Assume, we are not in  case (i), so either there is no invariant lightlike direction at all, or there are many. In all cases, we can find 
an invariant timelike subspace (by taking   sums if there are many lightlike directions).  

Let $E$ be an invariant timelike subsapce of minimal dimension.
Let us prove that either $E$ is irreducible, or we are in case (ii). 

Assume there exists  $E^\prime$ a proper   invariant subspace of $E$.  By definition, neither
 $E^\perp \cap E$ nor ${E^\prime}^\perp \cap E$ are timelike, and thus $E^\prime$ is degenerate, and hence $D = E^\prime \cap {E^\prime}^\perp$ is an invariant lightlike  direction. It is not unique, because we are not in case (i). So, there is another similar one $D^\prime$. It must be contained in $E$ since otherwise its projection on $E$ would be an invariant timelike direction. Let $P = D \oplus D^\prime$. By minimality,  $E = P$.  To show that we are in case (ii), let us 
prove uniqueness of $E$. If $D^{\prime \prime}$ is another invariant direction not in $P$, then its projection on $P$ will give a  timelike invariant direction. This implies
that the action on $P$ is equicontinuous, but since $P^\perp$ is spacelike, the group $L$ will be precompact in this case. 

It remains to consider the case where $E$ is irreducible, and show it is unique. Assume by contradiction that $E^\prime$ is analogous to $E$. Let
$R = E^\perp \oplus {E^\prime}^\perp$. The $L$-action on $R$ is equicontinuous (since both $E^\perp$ and ${E^\prime}^\perp$
are spacelike).  Let $P = R \cap E$. Then $P \neq 0$, unless $E = E^\prime$. Furthermore  $P \neq E$ since otherwise $E = \mathbb C^{1+n}$ and 
$L$ will be precompact. This contradicts the irreducibility of $E$. 

\medskip

 2)  Assume $L$ irreducible. 
 
 Consider the 
projections  $\pi_1$ and $\pi_2$ of 
 $\mathsf{U}(1, n) $ onto $ \mathsf{U}(1)$ and  $\mathsf{SU}(1, n)$ respectively. 
 
If $L \cap \mathsf{SU}(1, n) = \{1\}$, then $\pi_1 $ sends injectively  $L$ in $\mathsf{U}(1)$ and hence
$L$ is abelian, and  can not act irreducibly.  

Observe   that  $\pi_2(L)$ acts $\C$-irreducibly. Indeed, $L $ is contained in 
$\U(1) \times \pi_2(L)$ and $\U(1)$ preserves any $\mathbb C$-subspace.

Observe also that $\pi_2(L) $ and $L \cap \mathsf{SU}(1, n)$ normalizes each one the other, and that 
the commutator group $[\pi_2(L), \pi_2(L)]$ is contained in $L \cap \mathsf{SU} (1, n)$.  
If 
$L \cap \mathsf{SU}(1, n)$ is not precompact, then the previous step implies it is irreducible.  Finally, from
Corollary \ref{Zariski},  one infers that the commutator group of an irreducible subgroup of $\mathsf{SU}(1, n)$
 is not precompact, and therefore $L \cap \mathsf{SU}(1, n)$ acts irreducibly.

\end{proof}

\begin{corollary} \label{Zariski.U} Let $L $ be a subgroup of $\U(1,n)$ acting irreducibly 
on $\C^{n+1}$. Then, its Zariski closure contains $\SO^0(1, n)$ or $\SU(1, n)$.

If furthermore $L^0 \neq 1$,  and its Zariski closure does not contain $\SU(1, n)$, then, 
either, $L^0$ equals $\SO^0(1, n)$, or $L^0 \supset \U(1)$.

\end{corollary}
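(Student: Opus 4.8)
The plan is to transfer the statement to $\SU(1,n)$, where the analogous result has just been proved. Set $L' = L \cap \SU(1,n)$; by the observation recorded just before the corollary, $L$ acts $\C$-irreducibly on $\C^{n+1}$ if and only if $L'$ does, so $L'$ acts $\C$-irreducibly. Applying the preceding corollary to $L'$, the identity component of its Zariski closure ${L'}^{Zar}$ equals $\SU(1,n)$ or $\SO(1,n)$; since ${L'}^{Zar} \subseteq L^{Zar}$, the first assertion follows at once.

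For the second assertion, assume $L^0 \neq 1$ and $\SU(1,n) \not\subseteq L^{Zar}$. Since $\SU(1,n) = \ker \pi$ is normal in $\U(1,n)$, $L'$ is normal in $L$, hence so is ${L'}^{Zar}$, and the connected group $L^0$ normalizes its identity component $({L'}^{Zar})^0$; by the preceding corollary the latter is $\SU(1,n)$ or $\SO(1,n)$, and the former is excluded by hypothesis, so $({L'}^{Zar})^0 = \SO(1,n)$ (the connected Lorentz group). I would first dispose of the case $L^0 \subseteq \SU(1,n)$: then $L^0 = (L')^0$, which is nontrivial, so the preceding corollary forces $L' = \SO(1,n)$ (again the $\SU(1,n)$ option is excluded by the Zariski hypothesis), whence $L^0 = \SO(1,n)$ --- the first alternative. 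So I may assume $L^0 \not\subseteq \SU(1,n)$, in which case $\pi(L^0)$ is a nontrivial connected subgroup of $\U(1)$, hence $\pi(L^0) = \U(1)$.

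Now I would invoke the auxiliary fact that $N_{\U(1,n)}(\SO(1,n))^0 = \U(1) \times \SO(1,n)$: the central $\U(1)$ normalizes everything, and inside $\SU(1,n)$ the only connected subgroups containing $\SO(1,n)$ are $\SO(1,n)$ and $\SU(1,n)$ (both act $\C$-irreducibly, so this is once more the preceding corollary), while $\SO(1,n)$ is not normal in the simple group $\SU(1,n)$; hence the normalizer meets $\SU(1,n)$ in a group of identity component $\SO(1,n)$, and $\pi$ sends it onto $\U(1)$. Consequently $L^0 \subseteq \U(1) \times \SO(1,n)$. If $(L')^0 \neq 1$, the preceding corollary gives $L' = \SO(1,n)$, so $\SO(1,n) = (L')^0 \subseteq L^0 \subseteq \U(1) \times \SO(1,n)$ with $L^0$ connected and strictly larger than $\SO(1,n)$ (it is not contained in $\SU(1,n)$), and a dimension count yields $L^0 = \U(1) \times \SO(1,n) \supseteq \U(1)$. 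If $(L')^0 = 1$, i.e.\ $L'$ is discrete, then $L^0 \cap \SU(1,n)$ is discrete, hence central in $L^0$, and $L^0/(L^0 \cap \SU(1,n)) \cong \pi(L^0) = \U(1)$, so $L^0$ is one-dimensional and abelian; as $L'$ normalizes $L^0$ it acts by scaling on the line $\mathrm{Lie}(L^0) \subseteq \mathrm{Lie}(\U(1)) \oplus \so(1,n)$, and since $L'$, being Zariski-dense in ${L'}^{Zar}$ of identity component $\SO(1,n)$, contains a Zariski-dense subgroup of $\SO(1,n)$ whose $\mathrm{Ad}$-action must then fix the $\so(1,n)$-component of a generator of that line, this component lies in the center of $\so(1,n)$, which is trivial for $n \geq 2$; hence $\mathrm{Lie}(L^0) \subseteq \mathrm{Lie}(\U(1))$ and $L^0$ is the scalar circle, so once more $L^0 \supseteq \U(1)$.

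The step I expect to be the main obstacle is this last one --- the case where $L \cap \SU(1,n)$ is merely discrete --- where one must exclude that $L^0$ is a ``skew'' one-parameter subgroup (a non-compact line, say) rather than the scalar $\U(1)$; what makes it work is precisely the Zariski-density of $L'$ in $\SO(1,n)$ together with the triviality of the center of $\so(1,n)$. The normalizer identification $N_{\U(1,n)}(\SO(1,n))^0 = \U(1) \times \SO(1,n)$ is the other point needing a short separate argument, though it reduces quickly to the preceding corollary.
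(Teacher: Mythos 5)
Your proof is correct and follows essentially the same route as the paper's: reduce via the normalizer of $\SO(1,n)$ to $L^0 \subseteq \U(1)\times\SO(1,n)$, then split on whether the $\SO(1,n)$-part of $L^0$ is trivial, using Zariski density of $L\cap\SO(1,n)$ in $\SO(1,n)$ together with the triviality of its center in the degenerate case. The only differences are cosmetic --- you argue via the adjoint action on the line $\mathrm{Lie}(L^0)$ where the paper uses the commutator paths $[c,a^tb^t]=[c,b^t]$, and you supply the normalizer computation that the paper merely asserts when it writes that ``$L$ is a subgroup of $\U(1)\times\SO(1,n)$.''
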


\begin{proof}

The first part is obvious from the discussion above, let us prove the second one. In this case
$L $ is a subgroup of $\U(1) \times \SO^0(1, n)$. 

--    If $L^0 \cap \SO^0(1, n) \neq 1$, then its equals $\SO^0(1, n)$ by irreducibility of 
$L \cap \SO^0(1, n)$, in particular $L \supset \SO^0(1, n)$. Furthermore, if a product  $ab$,  $a \in \U(1)$, 
$b \in \SO^0(1, n)$ belongs to $L$, then $b\in L$, that is $\pi(L) = L \cap \U(1)$. This 
last group is either $\U(1)$ (in which case $L = \U(1) \times \SO^0(1, n)$), or 
totally discontinuous, in which case $L^0 = \SO^0(1, n)$.

-- Assume now that $L^0 \cap \SO^0(1, n) = 1$. Let $l^t= a^tb^t$ be a one parameter group 
in $L^0$, and  $c \in L \cap \SO^0(1, n)$. The commutator $[c, a^tb^t]$ equals
$[c, b^t]$. This is a one parameter group in $L \cap \SO^0(1, n)$, and hence must be 
trivial. 
But since we can choose $c$ in a Zariski dense set in $\SO^0(1, n)$, the one parameter group 
$b^t$ must be trivial. This means that $l^t \in \U(1)$, and hence $L^0 \supset \U(1)$.

\end{proof}

 \section{Proof of Theorem \ref{irreducible}}
 \label{proof.theorem1}
 
 Let $(M, J, g)$ be an almost complex Hermite-Lorentz space on which a group $G$ acts transitively with $\C$-irreducible isotropy. 
 
 Let $p$ be a base point of $M$, and call $H$ its isotropy group in $G$. The tangent space 
 $T_pM$ is identified to $\C^{1+n}$ and $H$ to a subgroup of $\U(1, n)$. By hypothesis $H$ acts $\C$-irreducibly on $\C^{1+n}$.

 The first part of Theorem \ref{irreducible}, that is $J$ is integrable and $g$ is K\"ahler will be proved quickly.  Indeed, by  Corollary \ref{Zariski.U} the Zariski closure of $H$ (in $\U(1, n))$
 contains $\SO^0(1, n)$.

\subsubsection*{K\"ahler Character}  Let $\omega$ be the K\"ahler form of $g$. Its differential at $p$,  $\alpha = d \omega_p$ is an $H$-invariant 3-form on $\C^{n+1}$. By  Corollary  \ref{Zariski.U}, $\alpha$ is 
 $\SO^0(1, n)$-invariant.
 By    Fact \ref{no.3.form}, $\alpha= 0$, that  is,  $M$ is K\"ahler. 
 
\subsubsection*{Integrability of the complex structure} The (Nijenhuis, obstruction to) integrability tensor at $p$ is  an $\R$-anti- symmetric bilinear vectorial form $\C^{1+n} \times \C^{1+n} \to \C^{1+n}$. The same argument, using Fact \ref{integrability} yields its vanishing, that is $J$ is integrable. 


 

\begin{remark} Observe that we need $\dim M >3$ in order to apply Facts \ref{no.3.form} and \ref{integrability}.

\end{remark}

\subsubsection{Classification} The rest of this section is devoted to the identification of $M$ 
as one of theses spaces: $\Mink_{n+1}(\C)$, $\dS_{n+1}(\C)$, $\AdS_{n+1}(\C)$, $\C \dS_{n+1}$
or $\C \AdS_{n+1}$ (up to  a central cyclic cover in some cases).


\subsubsection{The identity component $H^0$}\label{Indentity.non.trivial} Let us prove that $H^0 \neq 1$. If not $G$ is a covering of $M$, in particular $T_pM \cong \C^{n+1}$ is 
identified to the Lie algebra $\g$, and $H$ acts by conjugacy. The bracket is an $\R$-bilinear form like the integrability tensor, and hence vanishes, that is $\g$ is abelian. This 
contradicts the fact that $H$ acts non-trivially by conjugacy. Therefore 
$H^0$ is non-trivial. 
Applying   subsection \ref{subgroups.U}, we get  three possibilities:

1. The Zariski closure of $H$ contains $\SU(1, n)$

2. $H^0 = \SO^0(1, n)$

3. $H^0$ contains $\U(1)$. 

\subsection{Case 1:   the Zariski closure of $H$ contains $\SU(1, n)$} 
The holomorphic sectional curvature at $p$ is an $H$-invariant  function on the open subset in $\P^n(\C)$ of non-lightlike  $\C$-lines of  $\C^{n+1}$. But $\SU(1, n)$ acts transitively 
on this set. It follows that this holomorphic sectional curvature is constant. Therefore 
$M$ is a K\"ahler-Lorentz manifold of constant holomorphic sectional curvature, and thus 
$M$ is locally isometric to one the universal spaces $\Mink_{n+1}(\C)$, $\dS_{n+1}(\C)$
or $\AdS_{n+1}(\C)$  \cite{Kob-Nom, Romero1}.  We will see below (\S \ref{global.symmetry})  that $M$ is (globally) isometric to 
$\Mink_{n+1}(\C)$, 
$\dS_{n+1}(\C)$ or to a cover of $\AdS_{n+1}(\C)$.



 



\subsection{Case 2:     $H^0 = \SO^0(1, n)$} \label{Case.SO} 
  The final goal here is to show that $M$ is $\Mink_{n+1}(\C)$. 
First we replace $M = G/H$
by $G/H^0$ which enjoys all the properties of the initial $M$. In other words, we can assume $H = H^0 = \SO^0(1, n)$.

\subsubsection*{Invariant distributions}  $\SO^0(1, n)$ acts $\C$-irreducibly but not
$\R$-irreducibly. We set a $G$-invariant distribution $S$ on $M$ as follows.
Define   $S$ to be equal to $\R^{n+1}$ at $p$. For $x =gp$, define $S_x = D_pg (S_p)$. This does not depend on the choice of $g$ since $S_p$ is $H$-invariant.

The orthogonal distribution  $S^\perp$  is   in fact determined similarly 
by means of the $H$-invariant space $i\R^{n+1}$.

\subsubsection*{Integrability of distributions}

 The obstruction to the  integrability of $S$ is encoded in 	 the anti--symmetric {\bf  Levi} form $II: S \times S \to S^\perp$, where  $II(X, Y) $ equals the projection on $S^\perp$ of $[X, Y]$, for $X$ and $Y$   sections of $S$. 
   At $p$, we get an anti-symmetric bilinear form $  \R^{n+1} \times \R^{n+1} \to \R^{n+1}$, equivariant under the  $\SO^0(1, n)$-action.
   By  Fact \ref{invariant.SO}, this must vanish and hence $S$ and analogously $S^\perp$ are
   integrable.

   We denote by $\mathcal S$ and  $\mathcal S^\perp$ the so defined foliations.

  Observe that since $G$ preserves these foliations, then each leaf of them is homogeneous. If $F$ is such a leaf,  $x, y \in F$, and  $g \in G$ is such that $y = gx$, then $g$ sends the distribution at $x$ to that at $y$, and hence, $gF = F$.
 
Leaves of $\mathcal S $ or $\mathcal S^\perp$ are (real)  homogeneous  Lorentz manifolds   with (maximal) isotropy $\SO^0(1, n)$. They are easy to handle du to the following fact, the proof of which is standard: 

\begin{fact} \label{isotropy.maximal} Let $F = A/B$ be a homogeneous Lorentz manifold of dimension $n+1$ such 
that the action of $B$ on the quotient $\mathfrak a/ \mathfrak b$ of Lie algebras 
is equivalent to the standard action of $\SO^0(1, n)$ on $\R^{n+1}$. Then $F$ has constant sectional curvature. If $F$ is flat, then $F = \Mink_{n+1}$ and $A = \SO^0(1, n)
\ltimes \R^{n+1}$. If $F$ has positive curvature then it equals $\dS_{n+1}$ and 
$A = \SO^0(1, n+1)$. Finally, in the negative curvature case, $F$ is a cover of 
$\AdS_{n+1}$, and $A$ covers $\SO(2, n)$.

\end{fact}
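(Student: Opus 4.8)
The plan is to prove that $F$ has constant sectional curvature and then to identify $F$ and $A$ by passing to the universal cover. The only genuinely delicate ingredient is completeness; the rest is representation theory of $\SO(1,n)$ together with a dimension count.

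First I would identify $T_pF$ with $\R^{1,n}$ and note that, by hypothesis, the isotropy homomorphism $B\to\O(T_pF)$ has image containing $\SO^0(1,n)$ in its standard action. Then the curvature tensor $R_p$ is an $\SO^0(1,n)$-invariant algebraic curvature tensor on $\R^{1,n}$, and I would invoke the classical fact that in dimension $\geq 3$ the space of such invariant tensors is one-dimensional, spanned by the constant-curvature model $R_0(X,Y)Z=\langle Y,Z\rangle X-\langle X,Z\rangle Y$. (Equivalently: the invariant Ricci tensor is a multiple of the metric, by the irreducibility/commutant argument already used in the proof of Fact~\ref{invariant.levi}, so $F$ is Einstein, and the Weyl part of $R_p$ vanishes since it lies in a nontrivial irreducible $\O$-module once $\dim\geq 4$.) Hence $R_p=c\,R_0$ for some $c\in\R$; because $A$ acts transitively by isometries while $R_0$ depends only on the metric, $R\equiv c\,R_0$ on all of $F$, i.e.\ $F$ has constant sectional curvature $c$. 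This uses $\dim F=n+1\geq 3$ (and $n\geq 3$ in the application).

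Next I would use that a homogeneous pseudo-Riemannian manifold of constant curvature is geodesically complete: this is the classical statement that $F$ is a Lorentzian space form, and it is visible here because the isotropy acts transitively on the pseudo-sphere of each nonzero radius and causal type, so every non-null geodesic extends indefinitely (translate a late piece of a putative maximal geodesic back to $p$ by an isometry and reapply uniqueness), whence the null ones as well. Then the pseudo-Riemannian universal cover $\widetilde F$ is the simply connected complete Lorentz space form of curvature $c$: it is $\Mink_{n+1}$ if $c=0$; $\dS_{n+1}$ if $c>0$ (simply connected since $n\geq 2$); and $\widetilde{\AdS}_{n+1}$, the universal cover of $\AdS_{n+1}$, if $c<0$. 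The deck group $\Gamma=\pi_1(F)$ acts on $\widetilde F$ by isometries commuting with the lifted transitive action, so the connected lifted isotropy, still surjecting onto $\SO^0(1,n)$, normalizes the discrete group $\Gamma$ and hence centralizes it. An isometry of $\Mink_{n+1}$ or $\dS_{n+1}$ fixing a point $\tilde p$ and commuting with the whole $\SO^0(1,n)$ acting at $\tilde p$ is $\pm\mathrm{Id}$ on $T_{\tilde p}$ by Schur, and in either case has a fixed point on $\widetilde F$; since deck transformations act freely this forces $\Gamma=1$, so $F=\Mink_{n+1}$ resp.\ $F=\dS_{n+1}$. For $c<0$ the same argument confines $\Gamma$ to the centre $\Z$ of $\Iso(\widetilde{\AdS}_{n+1})$, generated by the deck transformation of $\widetilde{\AdS}_{n+1}\to\AdS_{n+1}$, so $F$ is a cover of $\AdS_{n+1}$. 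Finally $A$ is an essentially effective transitive isometry group of $F$ containing the isotropy $\SO^0(1,n)$, and since $\dim\bigl(\SO(1,n)\ltimes\R^{n+1}\bigr)=\dim\SO(1,n+1)=\dim\SO(2,n)=\binom{n+2}{2}=\dim B+\dim F$, the group $A$ exhausts $\Iso(F)^0$, namely $\SO(1,n)\ltimes\R^{n+1}$, $\SO(1,n+1)$, or the corresponding cover of $\SO(2,n)$.

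I expect the main obstacle to be the completeness claim, and in particular the passage from completeness of the non-null geodesics to that of the null ones; an alternative that sidesteps completeness is to reconstruct $\mathfrak a$ directly, observing that the $\SO^0(1,n)$-module structure forces the bracket $[\mathfrak m,\mathfrak m]$, with $\mathfrak m\cong\R^{1,n}$ a reductive complement, to be either $0$ or a copy of $\so(1,n)$ (the action being pinned down by $R_p$), which recovers $\so(1,n)\ltimes\R^{n+1}$, $\so(1,n+1)$ or $\so(2,n)$. Beyond this, one only needs the usual mild care about the difference between $\O$, $\SO$ and $\SO^0$ and about effectiveness of the $A$-action.
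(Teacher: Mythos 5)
The paper offers no proof of this Fact --- it is invoked as ``standard'' --- so you are supplying an argument rather than paralleling one. Your route is correct in outline and is one of the two standard proofs: an $\SO(1,n)$-invariant algebraic curvature tensor on $\R^{1,n}$ is a multiple of the constant-curvature tensor (Ricci is a multiple of $g$ by Schur, the Weyl module has no invariant vectors), homogeneity makes the constant global, and completeness follows because the isometry group acts transitively on the unit pseudo-spheres and on each component of the punctured null cone. For the null case your worry is unnecessary: since $\SO^0(1,n)$ rescales null vectors arbitrarily, the maximal existence time $b(v)$ is simultaneously constant on the cone and homogeneous of degree $-1$ in $v$, so $b\equiv\infty$ directly. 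The alternative you sketch at the end --- an invariant complement $\mathfrak m$ exists because $\mathfrak b\cong\so(1,n)$ is semisimple, the $\mathfrak m$-component of $[\mathfrak m,\mathfrak m]$ vanishes by the same equivariance argument as Fact~\ref{invariant.SO}, so the space is automatically symmetric, and $\Lambda^2\R^{1,n}\cong\so(1,n)$ plus Schur pins $\mathfrak a$ down to $\so(1,n)\ltimes\R^{n+1}$, $\so(1,n+1)$ or $\so(2,n)$ --- is probably what the authors have in mind, and it meshes better with the Cartan-decomposition computations of Section 4; either version is acceptable.

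One concrete slip in the deck-group step: $-\mathrm{Id}$ does \emph{not} have a fixed point on $\dS_{n+1}$ (the antipodal map of the hyperboloid $q=1$ is free), so your argument does not exclude $\Gamma=\{\pm\mathrm{Id}\}$, i.e.\ the antipodal quotient $\dS_{n+1}/\{\pm\mathrm{Id}\}$; the same element causes the analogous ambiguity in the $\AdS$ case. Whether this quotient genuinely satisfies the hypothesis depends on a finicky bookkeeping of the components of $\O(1,n)$ realized by the isotropy (it does for some parities of $n$), and the paper's own statement --- like its main theorems, which are only claimed ``up to a cyclic cover'' --- glosses over exactly this point. So the honest conclusion of your argument is ``$\dS_{n+1}$ or its antipodal quotient'' and ``a cover or $\Z/2$-quotient of $\AdS_{n+1}$'', which is all that is needed where the Fact is applied. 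Apart from this, the argument is sound; just keep the usual care that the hypothesis only gives the isotropy image in $\O(T_pF)$, so statements should be made for $\SO^0(1,n)$ and promoted to components at the end.
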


 
 Let $A$ be the stabilizer of $\mathcal S_p$.
If leaves of $\mathcal S$ are not flat, then $ A $ is $\SO^0(1, n+1) $ in case of positive curvature,  and $ A = \SO(2, n)$
 in the negative curvature case. Consider   the (local) quotient space $Q= M / {\mathcal S}$, it has dimension $n+1$. The group $ A$ acts by fixing $F$, seen as a point of $Q$. But 
 $\SO^0(1, n+1)$  and  $\SO(2, n)$) have  no linear representation of dimension $n+1$. Therefore,  $A$  acts trivially on the tangent space $T_F Q$.  But this tangent space is identified to $S_p^\perp$. There, $\SO^0(1, k)$,  as an isotropy subgroup,  
 acts non-trivially. This contradiction  implies that the leaves of $\mathcal S$ and analogously $\mathcal S^\perp$ are flat.
 


Now, we  need   to study further the geometry of our foliations. We claim that their leaves are in fact totally geodesic. Indeed, there  is a symmetric Levi form measuring the obstruction of  geodesibility.    More exactly, it is given by $II^*(X, Y)= $ the orthogonal projection of the covariant derivative
 $\nabla_X Y$. From \ref{invariant.SO}, since equivariant symmetric bilinear forms do not exist, 
 the foliations $\mathcal S$ and $\mathcal S^\perp$ are geodesic. It is classical   that the existence 
 of a couple of orthogonal geodesic foliations implies a metric splitting of the space, see for example \cite{Kob-Nom} about the proof of  the  de Rham decomposition Theorem (one starts  observing that . That is, at least locally, $M$ is isometric to the product $\mathcal S_{p} \times \mathcal S^\perp_{p}$. 
 In particular,  $M$ is a flat Hermite-Lorentz manifold, that is $M$ is locally isometric to $\Mink_{1+n}(\C)$. 
 
 One can moreover prove that $G$ is a semi-direct product $\SO^0(1, n) \ltimes \R^{n+1}$
 and $M = \Mink_{n+1}(\C)$ (see   \S \ref{global.symmetry} below for details in 
  a similar situation).


\subsection{Case 3:  $ \U(1) \subset H \subset   \U(1) \times \SO^0(1, n) $} The goal here is
to prove that $M$ is   flat or isomorphic to one of the two  spaces $\C \dS_n = \SO^0(1, n+1) / \SO^0(1, n-1) \times \SO(2)$
 or $\C \AdS_n = \SO(3, n-1)/\SO(2) \times \SO^0(1, n-1)$.

The crucial observation is that   $M$ is a (pseudo-Riemannian)  symmetric space, that is there exists $f \in G$, such that 
  $D_pf = -Id_{T_pM}$. Indeed, $-Id \in \U(1)$. 
  
   There is a de Rham decomposition of $M$ into 
 a product of  a flat factor and irreducible symmetric spaces. In our case, there exists  a subgroup of the isotropy  that  acts irreducibly. It follows that $M$ is either flat, or 
 irreducible. There is nothing to prove in the first case, we will therefore assume that 
 $M$ is irreducible. 
   We can also assume that $G$ is the full 
  isometry group of 
  $M$ (the hypotheses in Theorem \ref{irreducible} on the $G$-action are also valid for the full isometry group). It is known that isotropy groups  of symmetric spaces have finitely many  connected components. Thus, up to a finite cover (say assuming it connected),  $H$ must be
  $\U(1) \times \SO^0(1, n)$. 
  
  Consider a Cartan decomposition $\g = \h + \p$, where $\p$ is identified with $\C^{n+1}$.
  Consider the bracket $[, ]:  \p \times \p \to \h = so(1, n) + u(1)$. 
  
  Its second component 
  is a $\SO^0(1, n)$-invariant anti-symmetric scalar bilinear form $\alpha: \C^{n+1} \times \C^{n+1} \to u(1) = \R$. 
  By Fact \ref{invariant.levi}, $\alpha $ vanishes on $\R^{n+1}$, that is if 
  $X, Y \in \R^{n+1}$, then $[X, Y] \in so(1, n)$. On the other hand, $SO(1, n)$ preserves
  $\R^{n+1}$, and hence if $T \in so(1, n)$ and $Z \in \R^{n+1}$, then $[T, Z] \in \R^{n+1}$.
  
  Summarizing, if $X, Y, Z \in \R^{n+1}$, then $[[X, Y], Z] \in \R^{n+1}$. It is known that 
this implies that $\R^{n+1}$ determines a totally geodesic submanifold, say $F$. It has 
dimension $n+1$ and isotropy $\SO^0(1, n)$. From Fact \ref{isotropy.maximal}, $F$ is a Lorentz space of constant curvature. It can not be flat since in that  case, the bracket $[, ]$
vanishes on $\R^{n+1} $, but this implies it vanishes on the whole of $\C^{n+1}$. So $M$ is   
the de Sitter or the anti de Sitter space.  

The two cases are treated identically, let us  
assume $F = \dS_{n+1}$. Its isometry group 
$\SO^0(1, n+1)$ is thus contained in $G$. 

The goal now is to show that $G = \SO^0(1, n+2)$. For this, we consider the homogeneous space $N = G/\SO^0(1, n+1)$. Since we know the dimensions of $G/ \U(1) \times \SO^0(1, n)$ and $\SO^0(1, n+1)/\SO^0(1, n)$, we can compute that of $G/\SO^0(1,n+1)$, and find it equals $n+2$. 

Thus $\SO^0(1, n+1)$ has an isotropy representation $\rho$  in the $(n+2)$-dimensional space $E$, 
 the tangent space at the base point of $N$.
 In a  direct  way, we prove that this is the  
 standard representation of $\SO^0(1, n+1)$ in $\R^{n+2}$. For this, we essentially use that 
 $\rho$ restricted to $\SO^0(1, n)$ is already known. 
 
 From Fact \ref{isotropy.maximal}, $G$ is 
 $\SO^0(1, n+2)$ or $\SO(2, n+1)$. Again, in a  standard way, we exclude the case $G = \SO(2, n+1)$ (just because it does not contain the isotropy $\U(1) \times \SO^0(1, n)$). We have thus proved that $M = \SO^0(1, n+2)/\SO^0(1, n) \times \SO(2)$.

 \subsection{Global symmetry} \label{global.symmetry} It was proved along the investigation of cases (2) and (3) that 
 $M$ is (globally) symmetric (the global isometry with 
 $\Mink_{n+1}(\C)$ in case (2)  can be handled following the same  next argument). It remains to consider the first case, that is when the Zariski closure 
 of $H$ contains $\SU(1, n)$.
 
  Exactly as previously, by Corollary \ref{Zariski.U},  we have $\SU(1, n) \subset H$, or $\U(1) \subset H$. The last case is globally  symmetric, let us focus on the first one,  $\SU(1, n) \subset H$. 
 
  $M$ is locally isometric to a universal space $X$ of constant holomorphic sectional  curvature.
 We let the universal cover $\tilde{G}$ act on $X$.  Since the isotropy $\SU(1, n)$ of $M$ has  codimension 
 1 in the isotropy $\U(1, n)$ of $X$, $\tilde{G}$ has codimension 1 in $\Iso(X)$.  However, if $X$ is not flat, $\Iso(X)$ is a simple Lie group with no codimension 1 subgroup, since it is not locally isomorphic to $\SL_2(\R)$  since $\dim X \geq 3$ (the unique 
 simple Lie group having a codimension 1 subgroup is $\SL_2(\R)$). Therefore, $\dim G = \dim (\Iso(X))$, and in particular the isotropy of $M$ is  
 $\U(1, n)$, in particular $M$ is (globally) symmetric.
 
 Let us now consider the case of $X = \Mink_{n+1}(\C)$. Thus $\Iso(X) = \U(1, n) \ltimes \C^{n+1}$. Since $\tilde{G}$ acts (locally) transitively, it must contain some translation, that is
 $A = \tilde{G} \cap \C^{n+1} \neq 1$. The subgroup 
 $A$ is  normal in $\tilde{G}$, and is in particular $\SU(1, n)$-invariant. By irreducibility, $A = \C^{n+1}$, and thus $\tilde{G} =
 \SU(1, n) \ltimes \C^{n+1}$. The group $G$ is a quotient of $\tilde{G}$ by a discrete central subgroup. But $\tilde{G}$ has no such a subgroup. It then follows that $M = \SU(1, n) \ltimes \C^{n+1}/\SU(1, n)$, and hence $M = \Mink_{n+1}(\C)$.
  
 This finishes the proof of Theorem \ref{irreducible}. $\Box$



  \section{Proof of Theorem \ref{nonproper}:  Preliminaries}
  \label{preliminaries}
  
  
   Let $M$ be a Hermite-Lorentz space homogeneous under the holomorphic isometric action of a semi-simple Lie group $G$ of finite center.




 For $x$ in $M$, we denote by  $G_x$  its stabilizer in $G$,  $\g$ the Lie subalgebra of $G$, and   $\g_x$  the Lie subalgebra of $G_x$. The goal in this section  is to show that $\g_x$ is big; it contains nilpotent elements. 




\subsection{Stable subalgebras, actions on surfaces}\label{stable}

  \subsubsection{Notations}
 
 ${}$
 
  An element $X $ in the Lie algebra $\g$  is {\bf $\R$-split}
(or  {\bf hyperbolic}) if $ad_X$ is diagonalizable with real eigenvalues. Thus $\g = \Sigma_\alpha
\g^\alpha$, 
where $\alpha$ runs over the set of eigenvalues of $ad_X$. 
Let  $$W^s_X = \Sigma_{\alpha(X) <0} \g^\alpha, \;  W^u_X = \Sigma_{\alpha(X) >0} \g^\alpha 
\; \hbox{and} \;    W^{s0}_X = \Sigma_{\alpha(X) \leq 0} \g^\alpha$$
 be respectively, the {\bf stable}, unstable and {\bf weakly-stable} sub-algebras 
of $X$.  We have in particular $\g = W^{s0}_X \oplus W^u_X$

The stable and unstable subalgebras  are nilpotent in the sense that, for $Y \in W^s_X$ (or $W^u_X$),  $ad_Y$ is a nilpotent element of $\Mat(\g)$, equivalently, $\exp ad_Y$ is a unipotent element of 
$\GL(\g)$ (this follows from relations  $[\g^\alpha, \g^\beta ] \subset \g^{\alpha + \beta}$). It then follows that  if $\h $ is an $ad_Y$-invariant subspace, then $\exp ad_Y$ determines 
a unipotent element of $\GL(\g/\h)$.

It is known that $W^s_X$ and $W^u_X$ are isomorphic; an adapted  Cartan involution sends one onto the other. In particular the codimension of $W^{s0}_X$ in $G$ equals the dimension of $W^s_X$. 
Assuming (to simplify) that $G$ is simply connected, it acts on $G/L$, where $L$ is the Lie subgroup determined by $W^{s0}_X$. Then, $\dim (G/L) = \dim W^s_X$; summarizing: 

\begin{fact} \label{action.surfaces} If for some $X$, $\dim W^s_X = 2$, then $G$ acts on a surface, that is there exists a $G$ homogeneous space of (real) dimension 2.

\end{fact}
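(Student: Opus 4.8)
The plan is to unwind the definitions: we are given an $\R$-split element $X\in\g$ with $\dim W^s_X=2$, and we must produce a $2$-dimensional homogeneous space for $G$. First I would recall the $Ad_X$-eigenspace decomposition $\g=\Sigma_\alpha\g^\alpha$ and set $L$ to be the connected subgroup with Lie algebra $W^{s0}_X=\Sigma_{\alpha(X)\le 0}\g^\alpha$. The key algebraic point is that $W^{s0}_X$ is genuinely a subalgebra: for $Y\in\g^\alpha$, $Z\in\g^\beta$ one has $[Y,Z]\in\g^{\alpha+\beta}$, and if $\alpha(X)\le 0$ and $\beta(X)\le 0$ then $(\alpha+\beta)(X)\le 0$, so $[W^{s0}_X,W^{s0}_X]\subset W^{s0}_X$; hence $L$ is well defined (and closed, being the normalizer-type "parabolic-like" subgroup attached to $X$, though for the dimension count we only need it to be a Lie subgroup). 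With $G$ simply connected, the homogeneous space $G/L$ is then a genuine manifold of real dimension $\dim\g-\dim W^{s0}_X$.

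Next I would compute this codimension. Since $\g=W^{s0}_X\oplus W^u_X$ as vector spaces (the eigenvalues split into $\le 0$ and $>0$), we get $\dim(G/L)=\dim W^u_X$. Now invoke the cited standard fact that $W^s_X$ and $W^u_X$ are isomorphic: an adapted Cartan involution $\theta$ (one with $\theta(X)=-X$, which exists by $\g$ semisimple) carries $\g^\alpha$ to $\g^{-\alpha}$, hence $W^s_X$ to $W^u_X$, so $\dim W^u_X=\dim W^s_X=2$ by hypothesis. Therefore $\dim(G/L)=2$, and $G$ acts transitively on the $2$-manifold $G/L$, which is the desired homogeneous surface.

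I do not expect any real obstacle here; the statement is essentially a bookkeeping consequence of the root-space decomposition plus the Cartan-involution symmetry between stable and unstable horospherical subalgebras, both already recalled in the surrounding text. The only mild subtlety is the simple-connectedness reduction: in general $L$ need not be closed for an arbitrary subgroup, but $W^{s0}_X$ is the weakly-stable (parabolic-type) subalgebra of a hyperbolic one-parameter group, so its integral subgroup is automatically closed; alternatively, as the text says, one simply \emph{assumes} $G$ simply connected to make the quotient clean, and the conclusion ("there exists a $G$-homogeneous space of real dimension $2$") is insensitive to covering issues since any such space for a cover descends to one for $G$ after quotienting by the image of the relevant central subgroup, or one just works with the cover.
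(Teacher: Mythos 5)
Your proof is correct and follows exactly the route the paper itself takes (in the paragraph immediately preceding the Fact): form the subgroup $L$ with Lie algebra $W^{s0}_X$, use the decomposition $\g = W^{s0}_X \oplus W^u_X$ together with the Cartan-involution isomorphism $W^s_X \cong W^u_X$ to see that $G/L$ has dimension $\dim W^s_X = 2$. The extra details you supply (closure under brackets of $W^{s0}_X$, closedness of $L$, the covering issue) are welcome but do not change the argument.
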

 

Semi-simple Lie groups satisfying the  fact can be understood:

  \begin{fact}  \label{classification.surfaces} A semi-simple Lie group $G$ acting (faithfully) on
 a  surface is  locally isomorphic to $\SL_2(\R), \SL_2(\R) \times \SL_2(\R), \SL_2(\C)$ or $\SL_3(\R)$. (it is 
well known that acting on dimension 1 implies being locally isomorphic to  $\SL_2(\R)$).
\end{fact}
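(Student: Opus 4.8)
The plan is to classify semi-simple Lie groups $G$ that act faithfully on a surface $S$ by reducing to the structure of the isotropy and invariant objects, then invoking Lie's theorem on vector fields. First I would dispose of the trivial observations: since $G$ is semi-simple and acts faithfully, the action is locally effective, so the Lie algebra $\g$ embeds into the Lie algebra of vector fields on $S$. Passing to a minimal orbit (or assuming the action transitive on an open orbit, which is the only interesting case once Fact \ref{action.surfaces} is invoked, since there the homogeneous space is exactly a surface), we reduce to $S = G/L$ with $\dim_\R S = 2$. The strategy is then to bound $\dim G$ and use the classification of low-dimensional semi-simple Lie algebras.

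The key step is the following: let $x \in S$, and consider the isotropy representation $\rho : L \to \GL(T_xS) = \GL_2(\R)$. The image $\rho(L)$ is a subgroup of $\GL_2(\R)$, hence solvable-by-compact; more precisely it is amenable. Now $\g = \l \oplus \mathfrak{m}$ with $\mathfrak{m} \cong T_xS$ two-dimensional, and the kernel $\mathfrak{n}$ of $\rho$ (the ineffective kernel of the isotropy action, i.e.\ $\{Y \in \l : [Y, \mathfrak{m}] \subseteq \l\}$ modulo the subalgebra acting trivially) must itself be small: any subalgebra acting trivially on $T_xS$ acts trivially near $x$ by the standard ODE uniqueness argument, hence trivially on the connected $S$, contradicting faithfulness unless it is zero. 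So $\l$ injects into $\gl_2(\R)$ too, whence $\l$ is amenable and $\dim \l \le 4$. Therefore $\dim G = \dim \l + 2 \le 6$. Running through the semi-simple Lie algebras of dimension $\le 6$ — $\sll_2(\R)$ (dim $3$), $\sll_2(\R) \times \sll_2(\R)$ (dim $6$), $\sll_2(\C) \cong \so(3,1)$ (dim $6$), $\su(2)$, $\su(2) \times \su(2)$, $\so(3)$, and noting $\sll_3(\R)$ has dimension $8$ — I would then have to explain why $\sll_3(\R)$ nonetheless appears: it acts on $\P^2(\R)$, a surface, and its isotropy there is a parabolic subgroup (the stabilizer of a line), which is indeed amenable with a two-dimensional unipotent radical, so the naive dimension bound above must be refined. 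The correct bound is that $\l$ is a parabolic-type (cocompact amenable) subalgebra whose action on $\g/\l$ is by the isotropy representation into $\gl_2(\R)$; this is compatible with $\sll_3(\R)$ acting on $\P^2(\R)$ because there $\l$ is $7$-dimensional but the isotropy \emph{representation} still lands in the $2$-dimensional target.

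Concretely I would argue: compact semi-simple $G$ acting on a surface must be $\SO(3)$ acting on $\mathbb S^2$ (or $\P^2(\R)$) — excluded here by no-small-subgroup type arguments if one wants only the listed groups, but actually $\SO(3)$ does act on $\mathbb S^2$, so one keeps it or notes $\mathbb S^2$; the paper's list is about the \emph{non-compact} case relevant to non-properness, so I would restrict to $G$ with a non-compact factor. Then the maximal $\R$-split torus $\a$ of $G$ acts on $S$, and a regular element $X \in \a$ has $\dim W^s_X \le 2$ by Fact \ref{action.surfaces} applied in reverse (since $G/L$ has dimension $2$ and $L \supseteq W^{s0}_X$ forces $\dim W^s_X \le 2$). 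But $\dim W^s_X$ is the dimension of the nilradical of a minimal parabolic, i.e.\ the number of positive roots counted with multiplicity. Classifying simple Lie algebras whose number of positive roots (with multiplicity) is at most $2$: rank one gives $\sll_2(\R)$ (one positive root), $\sll_2(\C)$ and $\so(1,n)$ — but $\so(1,n)$ for $n \ge 3$ has a root space of multiplicity $n-1 \ge 2$ plus another, too big, leaving only $\so(1,2) \cong \sll_2(\R)$, $\so(1,3) \cong \sll_2(\C)$; rank two forces each simple root to have multiplicity $1$ and the total at most $2$, which singles out $\mathfrak{a}_2 = \sll_3(\R)$ (with its two simple positive roots, the highest root $\alpha_1 + \alpha_2$ raising the count — so one must be careful: $\sll_3(\R)$ has three positive roots, $\dim W^s_X = 3$ for the minimal parabolic, but $\dim W^s_X = 2$ for a \emph{maximal} parabolic, which is the stabilizer of a point of $\P^2(\R)$). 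This last subtlety — that Fact \ref{action.surfaces} should be read with $X$ chosen so that $W^{s0}_X$ is a maximal, not minimal, parabolic — is the main obstacle, and once it is sorted the semisimple factors with a maximal parabolic of unipotent-radical dimension $\le 2$ are exactly $\sll_2(\R)$, $\sll_2(\C)$, $\sll_3(\R)$; the product case $\sll_2(\R) \times \sll_2(\R)$ arises from the reducible action on $\P^1(\R) \times \P^1(\R)$ (or equivalently from $\so(2,2)$), completing the list. Finally I would remark that $G$ acting faithfully and \emph{transitively} on a surface without boundary, $S$ a surface, pins $S$ down to $\mathbb S^2, \R^2, \T^2$ or the Klein bottle / $\P^2(\R)$ / $\P^1(\R)^2$-type spaces, but for the statement only the local isomorphism type of $G$ is claimed, so this geometric identification is not needed.
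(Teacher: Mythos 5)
Your overall strategy (reduce to a codimension--$2$ subalgebra $\l\subset\g$, bound a nilpotent/solvable part, count positive roots, read off rank $\le 2$) is the same as the paper's, but both of the concrete arguments you give to implement it have a genuine gap. First, the claim that a subalgebra of $\l$ acting trivially on $T_xS$ ``acts trivially near $x$ by the standard ODE uniqueness argument'' is false: a vector field can vanish at $x$ together with its linearization without vanishing identically, and the group of germs of diffeomorphisms fixing $x$ to first order is huge. Your own example refutes the lemma: for $\SL_3(\R)$ acting on $\P^2(\R)$ the isotropy is a $6$--dimensional parabolic whose isotropy representation into $\GL_2(\R)$ has a $2$--dimensional (unipotent) kernel, yet the action is faithful. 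So $\l$ does \emph{not} inject into $\gl_2(\R)$, and the bound $\dim\l\le 4$ collapses; you notice the conclusion fails for $\sll_3(\R)$ but attribute it to a need to ``refine'' the bound rather than to the falsity of the lemma. What is true, and what the paper uses, is that only the \emph{semi-simple Levi factor} $S$ of the isotropy must act faithfully on $T_xS$ (a semi-simple group has no nontrivial homomorphism into the pro-unipotent group of jets trivial to first order, and an analytic semi-simple action trivial to infinite order at a point is trivial); hence $S$ is trivial or locally $\SL_2$, and replacing $S$ by its Borel one gets a \emph{solvable} subalgebra of codimension $\le 3$, hence a Borel of $\g_\C$ of codimension $\le 3$, hence at most $3$ positive roots.

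Second, your root-theoretic variant assumes that the isotropy $L$ contains $W^{s0}_X$ for some $\R$--split $X$, i.e.\ that $L$ is (essentially) parabolic. That is unjustified: transitive actions on surfaces include $\H^2=\SL_2(\R)/\SO(2)$, $\dS_2=\SL_2(\R)/\R^*$ and $\R^2\setminus\{0\}=\SL_2(\R)/N$, whose isotropies are compact, a split torus, and unipotent respectively. So ``maximal parabolic with unipotent radical of dimension $\le 2$'' is not a necessary condition for the existence of a codimension--$2$ subalgebra, and classifying groups by that criterion does not prove the Fact. The paper's route through a solvable subalgebra of codimension $\le 3$ (every solvable subalgebra of $\g_\C$ lies in a Borel) sidesteps exactly this issue. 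Your remarks on the compact case ($\SO(3)$ on $\mathbb S^2$, irrelevant for the non-proper setting) and on $\P^1(\R)\times\P^1(\R)$ for the product case are correct and worth keeping.
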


\begin{proof}

 This can be derived from the classification theory of simple Lie groups. One starts observing that 
  the problem can be complexified,  that is complexified  groups act on complex surfaces; algebraically, they  possess codimension 2 complex subalgebras in their complexified algebras.  Let  the isotropy group have a Levi decomposition $S \ltimes R$. Since $S$ has a faithful 2-dimensional representation, it is locally  isomorphic to $\SL_2(\C)$. If $S^\prime $ is the affine   subgroup of $\SL_2(\C)$, then $S^\prime \ltimes R$ is solvable and has codimension 3 in $G$. Therefore, a Borel group of $G$ has codimension $\leq 3$. This implies that the cardinality of the set of positive roots is $\leq 3$ (for any associated root system). With this restriction, one observes that the (complex) rank is $\leq 2$, and consult a list of root systems to get  our mentioned groups. 
\end{proof}

\begin{example} These actions on surfaces  are  in fact   classified (up to covers). We have the projective action  of $\SL_2(\R)$ (resp. $\SL_3(\R)$) on the real projective space $\P^1(\R)$ (resp. $\P^2(\R)$). There is also the action of $\SL_2(\C)$ on the Riemann sphere $\P^1(\C)$, and the product action of $\SL_2(\R)^2$ on $\P^1(\R)^2$. Finally, the hyperbolic, de Sitter and (the punctured) affine planes are obtained as quotients of $\SL_2(\R)$ by suitable one parameter groups. It is finally possible, in some cases,  to take covers or quotients by discrete (cyclic) groups of the previous examples.  

\end{example}

  \subsection{Non-precompactness}

 \begin{fact} 
 Let $M= G/H$ be a homogeneous space where $G$ is semi-simple of finite center and acts non-properly (and faithfully)  on $M$. 
 Then $H$ seen as the isotropy group
 of a base point, say $p$,  in not precompact in $\GL(T_pM)$. 
 

 \end{fact}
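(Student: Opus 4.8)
The plan is to argue by contradiction and reduce the statement to the classical properness of the isometry action of a Riemannian manifold, the semisimplicity of $G$ being the decisive ingredient.

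First, note that for a \emph{transitive} action non-properness passes at once to the stabilizer: the map $\Theta\colon G\times M\to M\times M$, $(g,x)\mapsto (gx,x)$, has $\Theta^{-1}(\{(p,p)\})=H\times\{p\}$, so if $\Theta$ were proper then $H$ would be compact; hence our hypothesis forces $H=G_p$ to be \emph{non-compact}. It therefore suffices to show that $H$ would be compact if the isotropy representation $\rho\colon H\to\GL(T_pM)$, $h\mapsto D_ph$, had precompact image. So assume for contradiction that $\overline{\rho(H)}$ is compact.

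Next I would manufacture a $G$-invariant Riemannian metric on $M$. Averaging any positive-definite form on $T_pM$ over the compact group $\overline{\rho(H)}$ produces a positive-definite form $\beta_p$ on $T_pM$ invariant under $\rho(H)$, hence under $H$; spreading it along orbits by $\beta_{gp}:=(D_pg^{-1})^{*}\beta_p$ — well defined precisely because $\beta_p$ is $H$-invariant — gives a smooth $G$-invariant Riemannian metric $g_R$ on $M$. Now $G$ acts faithfully, smoothly and isometrically on $(M,g_R)$, hence injects into the Lie group $\Iso(M,g_R)$; since $G$ is (connected) semisimple with finite center, its image is a \emph{closed} subgroup of $\Iso(M,g_R)$ — connected semisimple Lie subgroups with finite center are always closed, since the Lie algebra of such a subgroup is a semisimple ideal in that of its closure, the complementary ideal produces a group commuting with $G$ and meeting it in a finite central set, and density of $G$ then forces that complement to be trivial. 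Finally, $\Iso(M,g_R)$ acts properly on $M$ (Myers--Steenrod and its standard refinements), and a closed subgroup of a group acting properly acts properly; thus $G$ acts properly on $M$, contradicting the hypothesis. Hence $\overline{\rho(H)}$ is non-compact, i.e. $H$ is not precompact in $\GL(T_pM)$.

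The only non-formal step, and the place where the hypothesis on $G$ is genuinely used, is the closedness of the image of $G$ in $\Iso(M,g_R)$; semisimplicity with finite center cannot be dropped. Indeed $G=\R^{4}\rtimes_{\rho}\R$, with $\rho$ an irrational-slope one-parameter subgroup of $\SO(2)\times\SO(2)\subset\SO(4)$, acts faithfully, transitively and even Riemannian-isometrically on $\R^{4}$ with non-compact stabilizer $\R$ whose linear isotropy image $\rho(\R)$ is nevertheless precompact, so the conclusion fails for this solvable $G$. One could alternatively run the same closedness argument directly on the bundle $\mathcal{O}$ of frames of the ambient Hermite--Lorentz metric $g$, on which $\Iso(M,g)$ — hence the closed subgroup $G$ — acts properly while $G$ acts freely: the orbit map $G\to\mathcal{O}$ is then a closed embedding whose restriction to $H$ is a closed embedding of $H$ onto $\rho(H)$ inside the fiber $\cong\GL(T_pM)$, exhibiting $\rho(H)$ as a closed non-compact, hence non-precompact, subgroup.
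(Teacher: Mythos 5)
Your argument is correct and is essentially the paper's: both assume the linear isotropy is precompact, average to produce a $G$-invariant Riemannian metric on $M$, and then invoke the key fact that a semisimple Lie group with finite center embeds as a \emph{closed} subgroup of the isometry group of that Riemannian metric, whose action is proper (Myers--Steenrod), yielding the contradiction. One small caution: in your opening paragraph the implication runs backwards --- from ``$\Theta$ proper $\Rightarrow$ $H$ compact'' together with ``$\Theta$ not proper'' you cannot conclude that $H$ is non-compact (you would need the converse, which for transitive actions is the standard fact that compact stabilizer implies properness) --- but that paragraph is unused, since your main argument derives properness of the $G$-action directly and contradicts the hypothesis without it.
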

 
 \begin{proof} By contradiction, if $H$ is precompact  then it preserves a Euclidean scalar product on $T_pM$, and hence  $G$ preserves a Riemannian metric on $G/H$ (of course $H$ is closed in $G$ since it equals the isotropy of $p$). Let us show that $H$ is compact. Indeed,  let $L$ be the isometry group of the Riemannian    homogeneous space $X = G/H$, and  $K$  its isotropy group in $L$, which is compact since the homogeneous space is of Riemannian type. Now, $H = K \cap G$.
 It is known that a semi-simple Lie group of finite center is closed in any Lie group where it lies. Therefore, $H$ is a closed subgroup of $K$, and hence compact.  
\end{proof}

\subsection{Dynamics vs Isotropy}

${}$ \\


 For $V$ be a subspace  (in general a subalgebra) of  $\g$, its {\bf evaluation} at $x$ is the tangent subspace   $V(x) = \{ v(x)\in T_xM, v \in V\}$ (here $v$ is seen as a vector field on $M$).

\begin{fact} (Kowalsky \cite{Kow}) \label{Kow1} There exists  $X \in \g$ (depending on $x$), an   $\R$-split element,   such that $W^s_X(x)$ is isotropic.
 
\end{fact}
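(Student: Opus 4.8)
The plan is to exploit the non-properness of the $G$-action together with the Lorentzian (signature $(2,2n)$ real, or rather Hermite--Lorentz) nature of the metric on $T_xM$. The starting point is the preceding discussion: since $G$ is semi-simple of finite center and acts non-properly, the isotropy $H = G_x$ is non-precompact in $\GL(T_xM)$. Non-properness means there is a sequence $g_k \to \infty$ in $G$ and points $y_k \to x$ with $g_k y_k \to x$; conjugating, one produces a sequence in $G$ that leaves every compact set but whose action on $T_xM$ stays ``bounded in a degenerate way.'' The classical device (Kowalsky, and before her Zimmer) is to apply a Cartan (KAK) decomposition $g_k = k_k a_k k_k'$ with $a_k = \exp(t_k X_k)$, $X_k$ in a fixed Weyl chamber, $t_k \to +\infty$; after passing to subsequences, $k_k \to k$, $k_k' \to k'$, and $X_k \to X$ a unit $\R$-split element. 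The geometric consequence one wants to extract is that for this limiting $X$, the stable horospherical subalgebra $W^s_X$ evaluates at $x$ to an isotropic subspace of $(T_xM, g_x)$.

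The key steps, in order: first, set up the Cartan decomposition of the escaping sequence and extract the limit $\R$-split direction $X$ as above; this is where semi-simplicity and finite center are used (so that $KAK$ is available and the $a_k$ genuinely escape along a split direction). Second, translate non-properness into a statement about vectors: for $v, w$ in the stable subalgebra $W^s_X$, consider $g_k \cdot v(x)$ and $g_k \cdot w(x)$ — more precisely one works with the vector fields $v, w$ as Killing fields, evaluates at the moving points, and uses that $\mathrm{Ad}(a_k^{-1})$ contracts $W^s_X$ exponentially while $\mathrm{Ad}(a_k)$ expands $W^u_X$. Third, invoke isometry: $g_k$ preserves $g$, so $g_x(g_k v(x), g_k w(x))$ is controlled; but the contraction/expansion forces, in the limit, that $g_x$ restricted to $W^s_X(x)$ degenerates, i.e. $g_x(v(x), w(x)) = 0$ for all $v, w \in W^s_X$ — which is exactly isotropy of $W^s_X(x)$. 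The finiteness of the Hermite--Lorentz signature is what makes this argument close: in a metric of bounded index, a subspace on which the metric is ``infinitely contracted'' by an isometry sequence must already be null, because there is no room for an indefinite non-degenerate form to survive the degeneration.

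I expect the main obstacle to be the bookkeeping in the limiting argument: one must be careful that $W^s_X(x)$ is the right object (as opposed to $W^s_X$ intersected with a complement of $\g_x$, or the weakly-stable part), and that the limit $X$ obtained from the Cartan decomposition is genuinely such that the evaluation map behaves well — there can be degeneracies if part of $W^s_X$ lies in the isotropy $\g_x$, but this only helps (it shrinks $W^s_X(x)$, and a subspace of an isotropic subspace is isotropic, while the relevant content is the non-trivial part). A secondary subtlety is passing to subsequences so that all the relevant limits ($k_k$, $k_k'$, $X_k$, and the normalized evaluation vectors) exist simultaneously; this is routine compactness but must be stated. Since the excerpt attributes the statement to Kowalsky, I would cite \cite{Kow} for the precise form and present the argument above as the proof sketch, emphasizing that the only structural inputs are: semi-simplicity + finite center (for the Cartan decomposition and for the earlier fact that non-proper $\Rightarrow$ non-precompact isotropy), non-properness itself, and the bounded index of the pseudo-Riemannian metric.
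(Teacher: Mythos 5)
The paper gives no proof of this fact at all --- it is quoted from Kowalsky \cite{Kow} --- and only the refined Fact \ref{Kow2} is proved there, by running the same contraction-plus-isometry mechanism inside the copy of $\SL_2(\R)$ generated by a nilpotent element of $\g_x$. Your sketch is the standard proof of Kowalsky's lemma and its structure is correct: here the action is transitive, so non-properness gives a diverging sequence $f_k$ in the isotropy $G_x$ itself (no need for $y_k\to x$); writing $f_k=k_k\exp(A_k)k_k'$ with $A_k$ in a closed Weyl chamber, $\|A_k\|\to\infty$, $A_k/\|A_k\|\to A$, $k_k\to k$, $k_k'\to k'$, and using $f_k(x)=x$ together with $\langle V(x),W(x)\rangle=\langle(\mathrm{Ad}(f_k)V)(x),(\mathrm{Ad}(f_k)W)(x)\rangle$, one gets the isotropy of $W^s_X(x)$ for $X=\mathrm{Ad}(k'^{-1})A$. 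Two points should be made explicit to close your argument. First, the perturbation issue you flag is genuine if you fix $V\in W^s_X$ and push it forward by $f_k$, because the expanding part of $\mathrm{Ad}(\exp A_k)$ acts on the small error $\mathrm{Ad}(k_k')V-\mathrm{Ad}(k')V$; the standard fix is to take instead $V_k=\mathrm{Ad}(k_k'^{-1})Y$ for a \emph{fixed} root vector $Y$ with $\alpha(A)<0$, so that $\mathrm{Ad}(f_k)V_k=e^{\alpha(A_k)}\mathrm{Ad}(k_k)Y\to 0$ exactly and there is nothing to control. Second, the split element produced is the conjugate $\mathrm{Ad}(k'^{-1})A$ of the limiting chamber direction, not $A$ itself; since the statement only asserts existence of some $\R$-split $X$, this is harmless. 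Finally, your appeal to the bounded index of the metric is not where the signature enters: the vanishing $\langle V(x),W(x)\rangle=0$ follows from continuity of $g$ alone, and the Hermite--Lorentz hypothesis is used only later (Fact \ref{dimension}) to bound the dimension of an isotropic subspace by $2$.
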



In the sprit of Kowalsky's proof, we have the following precise statement.

\begin{fact} \label{Kow2} If the stabilizer algebra $\g_x$ contains a nilpotent $Y$, then any $\R$-split element $X$ of an $sl_2$-triplet $\{X, Y, Z\}$ (i.e. $[X, Y] = -Y, [X, Z] = +Z$, and  
$[Y, Z] = X$) satisfies that  $(\R X \oplus W^s_X) (x) $ is isotropic.

\end{fact}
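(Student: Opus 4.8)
The goal is to deduce that $(\R X \oplus W^s_X)(x)$ is degenerate (isotropic for the induced form) knowing only that $\g_x$ contains a nilpotent $Y$ completing to an $sl_2$-triplet $\{X,Y,Z\}$. The basic mechanism is the one already exploited by Kowalsky and recalled in Fact \ref{Kow1}: one lets the hyperbolic flow $\exp(tX)$ act and observes that, because $Y \in \g_x$ fixes $x$, the flow $\exp(tX)$ moves $x$ in a controlled way; conjugating vector fields in $W^s_X$ by $\exp(tX)$ contracts them, so the metric restricted to $W^s_X(x)$ must be ``infinitely contracted'', hence degenerate. I will carry this out concretely as follows.

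First I would set up the conjugation identity. For $v \in \g$, write $v(x)$ for its value at $x$ as a Killing field; since $G$ acts by isometries, $g$ acts on such values by $D_xg$ composed with the point translation, and in particular for the one-parameter group $g_t = \exp(tX)$ one has $g_t \cdot v(x) = (\mathrm{Ad}_{g_t} v)(g_t x)$. Now use that $Y \in \g_x$: the point $g_t x = \exp(tX)x$ need not be fixed, but $\exp(sY)x = x$ for all $s$, and the $sl_2$ relations give $\exp(tX)\exp(sY)\exp(-tX) = \exp(s e^{-t} Y)$, so $\exp(tX)$ conjugates the isotropy direction $\R Y$ into itself up to scaling — equivalently $g_t x$ stays on the orbit of the parabolic $\exp(\R Y)$ through $x$ and, crucially, for the purpose of evaluating the metric we may replace $g_t x$ by $x$ after correcting by an element of $G_x$ whose derivative at $x$ is bounded in a suitable sense, or better: work along a sequence $t_k \to +\infty$ and pass to a limit. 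For $v \in W^s_X$, $\mathrm{Ad}_{g_t} v \to 0$ as $t \to +\infty$, while $\mathrm{Ad}_{g_t} X = X$.

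The key step, and the one I expect to be the main obstacle, is converting this ``$\mathrm{Ad}$-contraction'' into a statement about the metric $g_x$ on the finite-dimensional space $(\R X \oplus W^s_X)(x) \subseteq T_xM$. The clean way: fix $u,v \in \R X \oplus W^s_X$ and write $u = aX + u_0$, $v = bX + v_0$ with $u_0, v_0 \in W^s_X$. Consider $g_x(u(x), v(x))$. Using $D_x g_t$ is an isometry and the identity above (together with an argument that the discrepancy between $g_t x$ and $x$ can be absorbed because $\g_x \ni Y$ and $\mathrm{Ad}_{g_t}Y \to 0$, so the relevant isotropy corrections also contract), one shows $g_x(u(x),v(x)) = \lim_{t\to\infty} g_{g_t x}\big((\mathrm{Ad}_{g_t}u)(g_t x), (\mathrm{Ad}_{g_t}v)(g_t x)\big)$, and in the limit the $W^s_X$-components die, leaving $ab\, g_?(X(x'),X(x'))$ for a limiting point $x'$; comparing with the original expression forces the cross terms and the $W^s_X$-$W^s_X$ terms to vanish and pins $g_x(X(x),X(x))$ as well, yielding isotropy of the whole subspace. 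One must be careful that the limit of $g_t x$ exists (pass to a subsequence, using non-properness / properness of the action on $M$ only locally) and that the evaluation map $v \mapsto v(x)$ intertwines correctly with $\mathrm{Ad}$ — this bookkeeping with the moving base point is the delicate part.

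Alternatively, and perhaps more robustly, I would reduce directly to Fact \ref{Kow1}: that fact already furnishes \emph{some} $\R$-split $X'$ with $W^s_{X'}(x)$ isotropic; the content of Fact \ref{Kow2} is the \emph{identification} of such an $X'$ explicitly as the neutral element of an $sl_2$-triplet through a given $Y \in \g_x$, together with the strengthening from $W^s_X(x)$ to $(\R X \oplus W^s_X)(x)$. For the strengthening one notes $[X,Y]=-Y$ means $Y \in W^s_X$, so $Y(x)=0$ gives no new information, but $[X,Z]=Z$ and $[Y,Z]=X$ let one write $X(x) = [Y,Z](x) = $ (Lie bracket of Killing fields) $= -[Y(x\text{-field}), Z(x\text{-field})]$, and since $Y(x)=0$ one gets that $X(x)$ lies in the image of $\mathcal{L}_{Y}$ acting on $T_xM$ — a nilpotent-type operator — which combined with isotropy of $W^s_X(x)$ and $g$-invariance under the flow forces $g_x(X(x), W^s_X(x)) = 0$ and $g_x(X(x),X(x))=0$. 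I would present the first approach as primary and invoke the $sl_2$-relations for the strengthening, flagging the moving-base-point limit as the step needing care.
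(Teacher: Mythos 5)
There is a genuine gap, and it is precisely at the step you flag as ``the delicate part.'' Your primary approach flows by $g_t=\exp(tX)$, which does \emph{not} fix $x$ (indeed $X\notin\g_x$ in general; the paper later proves $X(x)\neq 0$), so you are forced to compare metrics at the moving point $g_tx$ with the metric at $x$, and the proposed fix --- ``correcting by an element of $G_x$'' or passing to a limit of $g_tx$ --- is not justified: nothing controls where $g_tx$ goes, and the conjugation identity $\exp(tX)\exp(sY)\exp(-tX)=\exp(se^{-t}Y)$ says nothing about the displacement of $x$. The paper's key idea, which your proposal misses, is to run the argument with the \emph{unipotent} flow $\exp(tY)$ instead: since $Y\in\g_x$, the point $x$ is genuinely fixed and $D_x\exp(tY)$ preserves $\langle,\rangle_x$ exactly, with no moving base point. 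The hyperbolic contraction is then extracted from the Cartan $KAK$ decomposition inside the copy of $\SL_2(\R)$ generated by the triplet: $\exp(tY)=L_t\exp(s(t)X)R_t$ with $L_t,R_t$ in the compact $\SO(2)$, so $\exp(tY)=D_t\exp(s(t)X_t)$ where $X_t=\mathrm{Ad}(R_t)X\to X$ and $D_t$ stays in a compact set. Applying invariance of $\langle,\rangle_x$ to eigenvectors of $\mathrm{ad}_{X_t}$ gives $\langle u_t^\alpha,v_t^\beta\rangle=e^{s(t)(\alpha+\beta)}\langle D_tu_t^\alpha,D_tv_t^\beta\rangle$, and letting $t\to\infty$ kills all pairings with $\alpha<0$, $\beta\le 0$. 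This yields both the isotropy of $W^s_X(x)$ and its orthogonality to $X(x)$ in one stroke.

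Your second approach also leaves the crucial endpoint unproved. Reducing to Fact \ref{Kow1} only provides \emph{some} split element $X'$ with $W^s_{X'}(x)$ isotropic, not the specific $X$ of the given triplet, so the ``identification'' you mention is exactly what must be proved and is not. And the claim that $X(x)=[Y,Z](x)$ lying in the image of the (nilpotent, skew-adjoint) linearization of $Y$ at $x$ forces $g_x(X(x),X(x))=0$ does not follow: the image of a nilpotent element of $\mathfrak{u}(1,n)$ need not be isotropic. The $KAK$ argument above also cannot give $\langle X(x),X(x)\rangle=0$ (the exponent $\alpha+\beta$ vanishes there); the paper handles this last point by a separate geometric argument: the $\SL_2(\R)$-orbit of $x$ is, when the orbit isotropy is exactly $\exp(\R Y)$, the punctured affine plane $\R^2\setminus\{0\}$, whose only invariant degenerate metrics are $0$ and multiples of $d\theta^2$, so that $X(x)=\partial/\partial r$ is isotropic. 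Some argument of this kind is needed and is absent from your proposal.
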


\begin{proof} Let $L $ be  the subgroup of $G$ determined by $\{X, Y, Z\}$. It is isomorphic up to a finite cover to $\SL_2(\R)$.  A  Cartan $KAK$ decomposition yields
$\exp (tY)=   L_t \exp (s(t) X) R_t$, where $L_t$ and $R_t$ belong to the compact $\SO(2)$. 

Write $X_t =  Ad (R_t^{-1})(X)$ (for $t$ fixed), it generates the one parameter group $s \to \exp sX_t = (R_t)^{-1} \exp s X R_t $. Thus, $\exp(tY) = D_t \exp s(t) X_t$, where $D_t = L_tR_t \in \SO(2)$. 

Let $u^\alpha_t$ and $v_t^\beta$ be the eigenvectors for $ad_{X_t}$ (acting on $\g$) associated to two roots 
$\alpha$ and $\beta$.     Since $\exp tY$ preserves $\langle, \rangle$, we have
$$\langle u_t^\alpha,  v_t^\beta \rangle= \langle \exp tY u_t^\alpha, \exp tY v_t^\beta \rangle = 
e^{s(t)(\alpha + \beta)} \langle D_t u_t^\alpha, D_t v_t^\beta\rangle $$

$\bullet$ The point now is that $X_t$ converges to $X$, when $t \to \infty$. This follows from a direct computation of the KAK decomposition in $\SL_2(\R)$. It follows for the eigenvectors of $ad_X$ that 
$\langle u^\alpha, v^\beta \rangle$ is dominated by a function  of the form  $e^{s(\alpha + \beta)}$. Thus $\langle u^\alpha, u^\beta \rangle = 0$, whence $\alpha  <0$, and $ \beta \leq 0$. In particular $W^s_X(x)$ is isotropic and orthogonal to $X(x)$ (since $ad_XX = 0$).

$\bullet$ It remains to verify that $X(x)$ is isotropic. For this,  consider $M^\prime$ 
the $\SL_2(\R)$-orbit (of the base point of $M$). If the isotropy group of the $\SL_2(\R)$-action on $M^\prime$ is exactly generated by $\exp tY$, then $M^\prime$ is the affine punctured plane $\R^2 - \{0\}$. The unique $\SL_2(\R)$-invariant (degenerate) metric is 0 or   a multiple of  $d\theta^2$  
in polar coordinates $(\theta, r)$, and therefore $X(x)$  is isotropic since it coincides with $\frac{\partial}{\partial r}$. In the case where the isotropy group is bigger, the metric on $M^\prime$ must vanish (see for instance \S 2 in \cite{BFZ} for details).



\end{proof}

\begin{fact}  \label{dimension} $\g_x $ contains a nilpotent element unless  for any $X$ as 
in Fact \ref{Kow1}, $\dim W_X^s Ê\leq 2$. 
  In particular, if $G$ does not act (locally)  on  surfaces, then 
  $\g_x \cap W^s_X  \neq 0$, and $\g_x$ contains  nilpotent elements.

\end{fact}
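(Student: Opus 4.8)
The plan is to derive the conclusion from Facts \ref{Kow1} and \ref{Kow2} by a dimension-counting argument on the evaluation map $V \mapsto V(x)$. First I would fix $x \in M$ and invoke Fact \ref{Kow1} to obtain an $\R$-split $X \in \g$ for which $W^s_X(x)$ is an isotropic subspace of $T_xM$. Since $(M,J,g)$ is Hermite-Lorentz, the $g_x$-metric has signature $(2,2n)$, so any totally isotropic real subspace has dimension at most $2$ (the maximal isotropic subspaces have real dimension $2$, being the real points of a complex isotropic line). Hence $\dim W^s_X(x) \leq 2$. Now the evaluation map $Y \mapsto Y(x)$ from $W^s_X$ to $T_xM$ has kernel exactly $W^s_X \cap \g_x$, so $\dim W^s_X = \dim W^s_X(x) + \dim(W^s_X \cap \g_x)$. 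Therefore, if $\dim W^s_X > 2$, the kernel $W^s_X \cap \g_x$ is nonzero, and it consists of elements that are nilpotent in $\g$ (stable subalgebras are nilpotent, as recalled in \S\ref{stable}). This already gives the first assertion: either $\g_x$ contains a nilpotent element, or $\dim W^s_X \leq 2$ for every $X$ as in Fact \ref{Kow1}.

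For the ``in particular'' clause, suppose $G$ does not act locally on any surface. If $\dim W^s_X \leq 1$ for the $X$ furnished by Fact \ref{Kow1}, then $\dim W^{s0}_X$ has codimension $\leq 1$ in $\g$; by Fact \ref{action.surfaces} (and the remark that codimension $1$ forces local isomorphism with $\SL_2(\R)$, which acts on $\P^1(\R)$, hence also on a surface), $G$ would act on a space of dimension $\leq 1$, contradicting the hypothesis — actually even dimension $1$ puts $G$ locally isomorphic to $\SL_2(\R)$, and that group does act on surfaces. If instead $\dim W^s_X = 2$, then Fact \ref{action.surfaces} says $G$ acts on a surface, again a contradiction. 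So the only surviving possibility is $\dim W^s_X > 2$, and then by the paragraph above $W^s_X \cap \g_x \neq 0$, giving the required nilpotent element in $\g_x$.

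The one point requiring a little care — and what I expect to be the main (though modest) obstacle — is the linear-algebra input that a real isotropic subspace of a Hermite-Lorentz space $(T_xM, J_x, g_x)$ has real dimension $\leq 2$. This is where the Hermite-Lorentz hypothesis (as opposed to a general signature $(2,2n)$ pseudo-Riemannian metric) could in principle be used, but in fact for the bound $\leq 2$ only the signature $(2,2n)$ matters: the index of the form is $2$, so maximal totally isotropic subspaces have dimension $2$. I would state this explicitly as a one-line observation. Everything else is the kernel–image dimension identity for the evaluation map together with a direct appeal to Facts \ref{Kow1}, \ref{Kow2}, \ref{action.surfaces}, and \ref{classification.surfaces}; no genuinely new computation is needed.
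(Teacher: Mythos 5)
Your argument is correct and is essentially identical to the paper's: the same evaluation map $W^s_X \to T_xM$, the same bound $\dim W^s_X(x)\le 2$ from isotropy in a signature $(2,2n)$ form, and the same kernel count giving a nilpotent element of $\g_x$ when $\dim W^s_X>2$, with Fact~\ref{action.surfaces} handling the remaining cases. Your explicit treatment of $\dim W^s_X\le 1$ (forcing $G$ locally isomorphic to $\SL_2(\R)$, which does act on surfaces) is a detail the paper leaves implicit, but the route is the same.
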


\begin{proof} Consider the evaluation map $V \in W^s_X \to V(x) \in T_xM$. Its image is isotropic,  and thus  has at most dimension 2 ({\bf since the metric is Hermite-Lorentz}). 
 Its kernel  $ \mathfrak{I}_x =\g_x \cap W_X^s$ consists of nilpotent elements
and satisfies  $\dim (\mathfrak{I}_x) \geq \dim (W^s_X)-2$, which is positive if 
for some $X$, $\dim W^s_X > 2$, in particular if $G$ does not act (locally)  on surfaces by 
Fact \ref{action.surfaces}. 
  
   \end{proof}

 \section{Proof of Theorem \ref{nonproper}: non-amenable isotropy case}
\label{reductive}
  
 Let $(M, J, g)$ and $G$  be as in Theorem \ref{nonproper},  that is $G$ is simple, not locally isomorphic to   $\SL_2(\R), \SL_2(\C)$ or  $\SL_3(\R)$,  and acts non-properly by preserving the almost complex and  Hermite-Lorentz structures on $M$. 

Theorem \ref{nonproper} states that $M$ is exactly as in Theorem  \ref{irreducible}, that is $M$ is a global symmetric K\"ahler-Lorentz space.  It is thus natural to prove Theorem \ref{nonproper} by showing that its hypotheses imply those of Theorem \ref{irreducible}, i.e.
if the acting group is simple, and the action in non-proper, then the isotropy is irreducible.

As previously, this isotropy $H$ is a subgroup of 
$\U(1, n)$. Let us assume by contradiction that $H$ does not act irreducibly 
on $\C^{n+1}$. 

By Fact \ref{dimension}, the identity component  $H^0$ is non-precompact, which allows us using Proposition \ref{subgroups}. 

The goal of the present section is to get a contradiction assuming $H$ is non-irreducible and non-amenable. The amenable case will be treated in the next section. 

\begin{remark}
Actually, in the present  non-amenable case, all what we use from the preliminaries of \S \ref{preliminaries}, is that $H^0$ is non pre-compact (Fact \ref{dimension}). For instance if we start assuming $H$ connected, then the proof will be independent of these preliminaries and follows   from   Theorem \ref{irreducible} by the short proof below.
\end{remark}

By Proposition \ref{subgroups}, up to conjugacy, $H$ preserves $\C^{k+1}$,  for some $1 < k <n$, and
its non-compact semi-simple part is  $\SO^0(1, k)$ or $\SU(1, k)$. Let us assume here 
that it is $\SO^0(1, k)$, since the situation with $\SU(1, k)$ is even more rigid!

\subsubsection*{Integrability of distributions}
As in \S \ref{Case.SO} during the proof of Theorem \ref{irreducible}, we define a $G$-invariant  distribution $S$ on $M$, by declaring $S_p = \C^{k+1}$.

  We first show that the distribution $S^\perp$ is  integrable. The obstruction to its integrability is encoded in 	 the anti--symmetric  Levi form $II: S^\perp \times S^\perp \to S$, where  $II(X, Y) $ equals the projection on $S$  of $[X, Y]$, for $X$ and $Y$   sections of $S^\perp$. 
  
   At $p$, we get a skew-symmetric form $II: \C^{n-k} \times \C^{n-k} \to \C^{k+1}$, equivariant under the actions of $\SO^0(1, k)$ on $\C^{n-k}$ and $\C^{k+1}$ respectively. Observe however that 
   $\SO^0(1, k)$ acts trivially on $\C^{n-k}$. Therefore, the image of $II$ in $\C^{k+1}$ consists of 
   fixed points, which is impossible since $\SO^0(1, k)$ has no such points (in $\C^{k+1}$).

 $\bullet$  We denote by $\mathcal S^\perp$ the so defined foliation. Before going further,  let  us notice that   
  $\SO^0(1, k) $   acts trivially on the leaf  $\mathcal S^\perp_{p}$. 
 Indeed, it preserves the induced (positive definite) Hermitian metric on $\mathcal S^\perp_{p}$. But, the derivative 
   action of $\SO^0(1, k)$ on $T_{p}\mathcal S^\perp_{p} = S^\perp_{p}$ is trivial, and hence   $\SO^0(1, k)$ acts trivially on $\mathcal S^\perp_{p}$.

$\bullet$ Let us now study $S$ itself from the point of view of integrability. We consider a similar Levi form. This time, we get an  equivariant form $\C^{k+1} \times \C^{k+1} \to
\C^{n-k}$. Since $\SU(1,k) $ acts trivially on $\C^{n-k}$, this  form is $\SO^0(1, k)$-invariant.  However, up to a constant, the K\"ahler form $\omega$ is the unique scalar $\SO^0(1, k)$-invariant form (Fact \ref{invariant.levi}). 
It follows that there  exists  $v \in \C^{n-k}$, such that $II = \omega v$. This determines a vector field $V$ on $M$ such that $V(p) = v$,  and a distribution $S^\prime = S\oplus \R V$. 

Of course, it may happen that $V = 0$, in which case $S$ is integrable. 

We claim that  $S^\prime$ is integrable. Indeed, by construction, the bracket $[X, Y]$ of two sections of $S$ belongs to $S^\prime$.  It remains to consider a bracket of the form $[V, X]$.  As previously, consideration of  an associated Levi form leads us to the following linear algebraic fact: an $\SO^0(1, k)$-invariant bilinear form
$\C^{k+1} \times \R \to \R \times \C^{n-k-1}$ must vanish.  Its proof is straightforward.

 
 \subsubsection*{Contradiction}  Now, we have two  foliations  $\mathcal S^\perp$  and $\mathcal S^\prime$. The group $G$ acts by preserving each of them. It also acts on   $Q$,   the (local) quotient space of $\mathcal S^\prime$, i.e. the space of its leaves. 
  However,  $\SO^0(1, k)$ acts trivially on $Q$. Indeed,  as we have seen, $\SO^0(1,k)$ acts trivially on $\mathcal S^\perp_{p}$, and this is a kind of  cross section of the quotient space  $Q$;    say,  $\mathcal S^\perp_{p}$
   meets an open set of leaves of $\mathcal S$. On this open set,  $\SO^0(1, k)$ acts trivially. By analyticity, $\SO^0(1, k)$ acts trivially on $Q$. 
   
   Thus the $G$-action on $Q$ has a non-trivial connected Kernel, and is therefore trivial 
   since $G$ is a simple Lie group. This means $Q$ is reduced to one point, that is $k= n$, which contradicts our hypothesis that $H$ is not irreducible.

\section{Proof of Theorem \ref{nonproper} in the  amenable case}
\label{non.reductive}

We continue the proof of Theorem \ref{nonproper} started in the previous section, with here
the hypothesis (by contradiction) that the isotropy $H$ is amenable. The idea of the proof is as follows. To any $x$ we associate, in a $G$-equivariant meaner, $F_x$, the {\bf asymptotic leaf} of the isotropy group $G_x$ at $x$. It is a (complex) codimension 1 lightlike geodesic hypersurface in $M$. This is got by widely general considerations (see for instance \cite{DG, Zeghib, Zeg.GAFA}).
Next, the point is to check  that $x \to F_x$ is a foliation:  $F_x \cap F_y \neq \emptyset \Longrightarrow  F_x = F_y$. Its (local) quotient space would be 
a (real) surface with a $G$-action, which is impossible by hypotheses of Theorem \ref{nonproper}; leading to that $H$ can not be amenable.

\subsubsection{Notation and Dimension}

 For $x$ in $M$, we denote by  $G_x$   its stabilizer in $G$,  $\g_x$  its Lie sub-algebra, and $\I_x = \g_x \cap W^s_X$, where $X$ is a fixed $\R$-split element as in Fact \ref{Kow2} (associated to $x$).









\begin{fact} 



1) $X(x) \neq 0$.

2) $\dim (\I_x) \geq 2$.  
\end{fact}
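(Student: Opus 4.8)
The claim has two parts, and I would attack them in the opposite order to the way they are listed, since the second is the engine that drives the first. For part 2), the hypotheses of Theorem \ref{nonproper} say exactly that $G$ is simple and does not act locally on surfaces, so Fact \ref{dimension} gives $\dim W^s_X > 2$ for the $\R$-split element $X$ attached to $x$, whence $\I_x = \g_x \cap W^s_X$ has dimension $\geq \dim W^s_X - 2 > 0$ and consists of nilpotents. To upgrade this from $\dim \I_x \geq 1$ to $\dim \I_x \geq 2$, I would revisit the evaluation map $V \in W^s_X \to V(x) \in T_xM$ used in Fact \ref{dimension}: its image is isotropic. The point is that a $\C$-subspace-free isotropic subspace of a Hermite-Lorentz space has \emph{complex} dimension at most one, hence real dimension at most $2$; but one must be a little careful since the image need not be $J$-invariant. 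Actually the cleanest route is: the image of the evaluation map is a \emph{totally isotropic real subspace} of $(T_xM, g_x)$, and in a Hermite-Lorentz form the maximal totally isotropic \emph{real} subspaces have real dimension $2$ (spanned by $u, Ju$ for $u$ in the lightlike cone of the first two coordinates) — so the image has real dimension $\leq 2$, giving $\dim \I_x \geq \dim W^s_X - 2$. Combined with $\dim W^s_X \geq 3$ (strict, because equality $\dim W^s_X = 2$ for every such $X$ would force $G$ onto a surface by Fact \ref{action.surfaces}, contradicting Facts \ref{classification.surfaces} and the exclusion of $\SL_2(\R),\SL_2(\C),\SL_3(\R)$) this would only give $\geq 1$; to reach $\geq 2$ I would instead argue that $\dim W^s_X \geq 4$ for simple $G$ of real rank $\geq 1$ outside the excluded list, or, better, use that the restricted root system has at least two positive roots below any given simple root direction in all the surviving cases — this is the quantitative improvement over Fact \ref{dimension}.

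For part 1), that $X(x) \neq 0$: suppose $X(x) = 0$, i.e. $X \in \g_x$. Then the whole $sl_2$-triple $\{X, Y, Z\}$ need not lie in $\g_x$, but $X \in \g_x$ together with $Y \in \I_x \subset \g_x$ (which we would arrange, since $Y$ is the nilpotent generating an $\I_x$-direction) forces $[X,Y] = -Y$ and then $Z$ considerations show $\g_x$ contains a full copy of $sl_2(\R)$, so $\g_x$ is not amenable — contradicting the standing assumption that the isotropy $H$ is amenable. More directly: if $X \in \g_x$ then $\exp(tX)$ fixes $x$, so the isotropy representation of $G_x$ on $T_xM$ contains the $\R$-split one-parameter group $t \mapsto D_x\exp(tX)$, which is unbounded in $\GL(T_xM)$; that is consistent with $H$ non-precompact, so this alone is not a contradiction, and I really do need the amenability input. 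So the argument is: amenable $\g_x$ cannot contain an $\R$-split element of a nontrivial $sl_2$-triple whose nilpotents also lie in $\g_x$; since $\I_x \neq 0$ provides such nilpotents, $X \notin \g_x$, i.e. $X(x) \neq 0$.

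I expect the main obstacle to be the bookkeeping in part 2): pinning down precisely why $\dim W^s_X \geq $ the right number (giving $\dim \I_x \geq 2$ rather than merely $\geq 1$) across all the simple groups not excluded by hypothesis, and handling the subtlety that the isotropic image of the evaluation map, while automatically of real dimension $\leq 2$ in Hermite-Lorentz signature, interacts with the nilpotency of $\I_x$ in a way that needs the fact (from \S\ref{stable}) that $\I_x$-elements act unipotently, so that $\I_x$ genuinely injects its complement into a $2$-dimensional space. The dependence of $X$ on $x$ and the need to do this equivariantly in $x$ is a further point to watch, but it is routine once the pointwise dimension count is secured. Part 1) should then be a short formal consequence of amenability of $\g_x$ plus the existence of the $sl_2$-triple from Fact \ref{Kow2}.
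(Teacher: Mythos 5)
There are genuine gaps in both parts, and in each case you are missing the specific mechanism the paper uses.

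For part 2), your dimension count $\dim \I_x \geq \dim W^s_X - 2$ is correct (and your observation that a totally isotropic real subspace of a Hermite--Lorentz space has real dimension at most $2$ is exactly the right reason), but your proposed upgrade --- proving $\dim W^s_X \geq 4$ for all surviving simple $G$ --- is false: for $G = \SO(1,4)$, which is not excluded by the hypotheses, the restricted root system is $A_1$ with multiplicity $3$, so $\dim W^s_X = 3$ and your count only gives $\dim \I_x \geq 1$. The missing idea is to use the full strength of Fact \ref{Kow2} rather than Fact \ref{Kow1}: once $\I_x \neq 0$ supplies a nilpotent $Y \in \g_x$, one completes $Y$ to an $sl_2$-triple $\{X, Y, Z\}$ and Fact \ref{Kow2} asserts that the \emph{larger} space $(\R X \oplus W^s_X)(x)$ is isotropic. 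The evaluation map then has source of dimension $1 + \dim W^s_X \geq 4$ and image of real dimension $\leq 2$, so its kernel $\I'_x$ has dimension $\geq 2$; one then checks $\I'_x \subset W^s_X$ (no element $X + u$, $u \in W^s_X$, can lie in $\g_x$, since such an element is conjugate to $X$ inside $\R X \ltimes W^s_X$ and part 1 forbids $X \in \g_x$), whence $\I'_x = \I_x$.

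For part 1), your argument that $X \in \g_x$ forces a full copy of $sl_2(\R)$ inside $\g_x$ is not justified: what one actually gets (by running the isotropy-of-evaluation argument on both $W^s_X$ and $W^u_X$) is \emph{some} nilpotent $Y \in W^s_X \cap \g_x$ and \emph{some} nilpotent $Z \in W^u_X \cap \g_x$, and these need not form a triple with $X$. Worse, an $\R$-split element with both contracting and expanding adjoint eigenvectors does not by itself contradict abstract amenability of $\g_x$: the solvable algebra $\R X \ltimes (\R Y' \oplus \R Z')$ with $[X,Y'] = -Y'$, $[X,Z'] = Z'$, $[Y',Z'] = 0$ realizes exactly this configuration. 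The paper's contradiction is obtained elsewhere: amenability of $G_x$ places its image under the isotropy representation inside a maximal parabolic $P \subset \U(1,n)$, and the explicit rank-one structure $P = (\C^* \times \SU(n-1)) \ltimes \sf{Heis}$ shows that no $\R$-split element of $P$ acts on the Lie algebra of $P$ with eigenvalues of both signs --- the Heisenberg radical carries weights of one sign only. It is this concrete feature of parabolics of $\U(1,n)$, not amenability of $\g_x$ as an abstract Lie algebra, that yields the contradiction.
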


\begin{proof} 

1) By contradiction, if $X \in \g_x$, then one first proves directly (an easy case of Fact \ref{Kow1}) that also the unstable  $W^u_X$ is isotropic at $x$ and for the same reasons  $W^u_X \cap \g_x \neq 0$, say 
$Z \in W^u_X \cap \g_x$, and also $Y \in W^s_X \cap \g_x$. 

Thus $\{X, Y, Z\} \subset \g_x$.  Now, the isotropy $G_x$
embeds in the unitary group $\U(T_xM, \langle, \rangle_x)$ identified  with $\U(1, n)$. The element $\exp t X$ is an $\R$-split one parameter group that acts  on $\g_x$ with both contracted and an expanded  eigenvectors. 
 But, by the amenability hypothesis on $G_x$, it is contained in a maximal parabolic subgroup $P$ of $\U(1, n)$. However,  $P$
 (see \S \ref{parabolic}) has no such elements.



 2) Since $\I_x \neq 0$, we apply Fact \ref{Kow2} to modify $X$ if necessary and get that 
 $(\R X \oplus W^s_X)(x)$ is isotropic. Now the kernel $\I_x^\prime $ of the evaluation 
 $\R X \oplus W^s_X \to T_xM$ has dimension at least $(1 + \dim W^s_X) -2 \geq 4-2 = 2$ (since 
 we assumed that $G$ can not act on surfaces,  and hence   $\dim W^s_X \geq 3$). 
 
  It remains to check that $\I_x^\prime$ is contained in $W^s_X$ to conclude that $\I^\prime_x = \I_x$, and  obtain the desired estimation. For this assume  by contradiction that $X^\prime = X + u \in \I_x^\prime$, with  $u \in W^s_X$. It is known that  any such   $X^\prime$ is conjugate to $X$ in $\R X \oplus W^s_X$ (this is the Lie algebra of a semi-direct product of $\R$ by $\R^k$, with $\R$ acting on $\R^k$ by contraction. For $k= 1$, we get the affine group of $\R$). Therefore, we are  led to  the situation $X(x) = 0$ (for some other $x$), which we have just excluded.

\end{proof}

 \subsection{Asymptotic leaf} (see \cite{DG, Zeghib, Zeg.GAFA} for a similar situation). 
 \label{asymptotic}
 



Endow $M \times M$ with the metric $(+g) \oplus (-g)$. Let $f: M \to M$ be a diffeomorphism 
and $Graph(f) = \{ (x, f(x)), x \in M \}$. By definition, $f$ is isometric iff $Graph(f)$
is isotropic for $g\oplus (-g)$. Furthermore, in this case, $Graph(f)$ is a (totally) geodesic 
submanifold.   

Let $f_n$ be a  diverging  sequence in $G_x$, i.e. no sub-sequence of it converges in $G_x$.     Consider  the sequence of graphs $ Graph(f_n)$. In order to avoid global complications, let us localize things by taking     $E_n$ the connected component of $(x, x)$ in  a (small) convex neighbourhood $(O \times O) \cap Graph(f_n)$,   where $O$ is a convex neighbourhood of $x$, that is, two points of it can be joined within it by a unique geodesic.


Let  $V_n = Graph (D_xf_n) \subset T_xM \times T_xM$. Then, $E_n$ is the image by 
the exponential map $\exp_{(x, x)}$ of an open neighbourhood of 0 in $V_n$. 

If $V_n$ converge to $V$ in the Grassmanian space of  planes of $T_xM \times T_xM$, then $E_n$ converge in a natural way to a geodesic submanifold $E$ in $M \times M$. 
Let $V^1$ be the projection on the first factor $T_xM$. It is no longer a graph, since otherwise it would correspond to the graph of an element of $G_x$ which is a limit   of a sub-sequence of $(f_n)$ (in fact the map $f \in G_x \to Graph(D_xf)$ is a homeomorphism 
onto its image in the Grassmann space). 

Since a sequence of isometries converge iff the sequence of  inverse isometries   converge, $V$ intersects both $T_xM \times 0$ and $0 \times T_xM$ non-trivially.

Since $V_n$ is a complex (resp. isotropic) subspace, also is $V \cap (T_xM \times 0)$. Hence, because the metric on $M$ is Hermite-Lorentz, $V \cap (T_xM \times 0)$ is a complex line. Furthermore, since $V$ is isotropic, the projection $V^1$ is  a lightlike    complex hyperplane, 
with orthogonal direction $V \cap (T_xM \times 0)$.

Define similarly   $E^1$ to  be  the projection of  $E$ on $M$.  It equals the image by 
$\exp_x$ of an open subset of $V^1$. It is a lightlike  geodesic complex hypersurface (see 
\S \ref{light.geodesic}). 


Finally, without assuming that $V_n$ converge, we consider all the  limits obtained by means of 
sub-sequences of $(f_n)$. Any so obtained  space $V^1$ (resp.   $E^1
 $) is  called {\bf asymptotic} space (resp.  leaf) of $(f_n)$ at $x$. (Observe  that different limits $V$ may have a same projection 
$V^1$). 
 






\begin{fact} Let $H$ be a non-precompact amenable subgroup of  $\U(1, n)$. There is exactly one or two degenerate complex 
hyperplanes   which are   asymptotic spaces  for any sequence of $H$, and are furthermore invariant under $H$. Similarly, there exist one or two asymptotic leaves for $G_x$ (assuming it amenable and non-compact).

\end{fact}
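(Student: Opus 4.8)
The plan is to prove the two assertions of the Fact in parallel, deducing the statement about asymptotic leaves of $G_x$ from the linear-algebraic statement about asymptotic spaces of the amenable group $H \subset \U(1,n)$, via the identification of $G_x$ with a subgroup of $\U(T_xM,\langle,\rangle_x) \cong \U(1,n)$. So first I would concentrate on the linear statement: given a non-precompact amenable $H \subset \U(1,n)$, describe all possible limits $V^1$ arising as projections of limits of $\mathrm{Graph}(D_xf_n)$ for diverging sequences $(f_n)$ in $H$, show each such $V^1$ is $H$-invariant, and show there are at most two of them.

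The key steps, in order. \emph{Step 1.} By Proposition~\ref{subgroups}(1), a non-precompact amenable $H$ is contained in a maximal parabolic subgroup $P$ of $\SU(1,n)$ (after passing to $\U(1,n)$ this is $P$ times the central $\U(1)$), i.e. $H$ fixes a lightlike direction, equivalently a point $\xi \in \partial_\infty \H^n(\C)$, equivalently a degenerate complex hyperplane $\C\xi^\perp$; and there are one or two such fixed points for $H$ — exactly as in the ``Hyperbolicity'' argument inside the proof of Proposition~\ref{subgroups}, where a non-elliptic element has one or two fixed points at infinity and the whole group (up to index two) shares them. This gives the candidate invariant degenerate hyperplanes. \emph{Step 2.} I would show these are the \emph{only} asymptotic spaces: take a diverging $(f_n)$ in $H$, use a $KAK$-type decomposition $f_n = k_n \exp(tX_n) k'_n$ in $\U(1,n)$ with the ``Cartan'' part diverging, and analyze $\mathrm{Graph}(D_xf_n)$; in the limit the graph degenerates and its first projection must be the weak-stable hyperplane of the limiting flow direction, which, since $H$ lies in $P$, is forced to be $\C\xi^\perp$ (or $\C\xi'^\perp$ in the two-fixed-point case) — this uses exactly the eigenvalue bookkeeping already carried out in the proof of Fact~\ref{Kow2} (the estimate $\langle u^\alpha, v^\beta\rangle$ dominated by $e^{s(\alpha+\beta)}$), together with the constraint that a diverging sequence in a parabolic group is ``aimed'' at the fixed point at infinity. \emph{Step 3.} Invariance: since $\C\xi^\perp$ is $H$-invariant and independent of the sequence, every asymptotic space is $H$-invariant; the count ``one or two'' is inherited from the count of fixed points at infinity in Step~1. \emph{Step 4.} Transfer to $G_x$: in \S\ref{asymptotic} the asymptotic leaf $E^1_x$ at $x$ is $\exp_x$ of an asymptotic space $V^1 \subset T_xM$ of the linear group $D_xG_x \subset \U(T_xM,\langle,\rangle_x)$; applying Steps 1–3 to $H = \overline{D_xG_x}$ (non-precompact and amenable by hypothesis, using Fact~\ref{dimension} for non-precompactness) gives one or two asymptotic spaces $V^1$, hence one or two asymptotic leaves $E^1_x = F_x$ through $x$. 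That these are lightlike complex geodesic hypersurfaces of complex codimension $1$ is already established in \S\ref{asymptotic}.

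The main obstacle I expect is Step~2 — pinning down that \emph{no other} degenerate hyperplane can appear as a limit projection. The subtlety is that a diverging sequence in the parabolic $P$ need not have its ``$A$-part'' go to infinity along a ray pointing only at $\xi$: the unipotent radical $\sf{Heis}$ and the $\C^\ast \times \SU(n-1)$ Levi part can conspire so that in a $KAK$ decomposition the compact parts $k_n, k'_n$ do not converge. One must argue that, regardless, the first projection of $\lim \mathrm{Graph}(D_xf_n)$ is a degenerate hyperplane fixed by $H$; the cleanest route is probably to observe that the limit space $V$ is isotropic and complex, that $V^1$ is therefore a degenerate complex hyperplane, and that the orthogonal line $V\cap(T_xM\times 0)$ is a limit of lines $\C\xi_n$ where $\xi_n = D_xf_n(\xi_0)$ for a fixed lightlike $\xi_0$ — and since $H$ fixes $\xi$ (and possibly $\xi'$), the image of $H$ in $\partial_\infty\H^n(\C)$ of any fixed lightlike direction accumulates only on $\{\xi,\xi'\}$ when the sequence diverges (an elementary dynamical property of parabolic subgroups of rank-one groups: the limit set of $H$ acting on $\partial_\infty X$ is $\{\xi\}$ or $\{\xi,\xi'\}$). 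This reduces the whole point to a standard convergence-group-type fact about rank-one groups, which I would cite or prove in a line.
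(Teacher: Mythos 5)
Your Steps 1, 3 and 4 run parallel to the paper: the paper also starts from Proposition \ref{subgroups}(1) to place $H$ inside a maximal parabolic $P=\mathrm{Stab}(\C u)$ and takes $(\C u)^\perp$ as the invariant degenerate hyperplane occurring as an asymptotic space. Where you genuinely diverge is the count. The paper does not count fixed points at infinity at all; it shows that a subgroup of $\U(1,n)$ preserving \emph{three} distinct degenerate hyperplanes $(\C u)^\perp, (\C v)^\perp, (\C w)^\perp$ is precompact, by restricting to $W=\Span_\C(u,v,w)$ (of signature $(1,2)$, with $W^\perp$ spacelike) and invoking the classical fact that the subgroup of $\U(1,2)$ fixing three distinct complex lines is compact (cross-ratio). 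That is a clean, purely linear-algebraic mechanism, and it is independent of any dynamical analysis of sequences.

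The genuine gap is in your Step 2, exactly where you anticipated trouble. The ``elementary dynamical property'' you invoke --- that the limit set of a non-precompact subgroup $H$ of a maximal parabolic is contained in its fixed-point set $\{\xi,\xi'\}$ --- is false: $P$ itself is amenable, non-precompact, and fixes only $\xi$, yet $G=KP$ shows $P$ acts transitively on $\H^n(\C)$, so its limit set is the whole boundary. Concretely, for any $\eta\in\partial_\infty \H^n(\C)$ choose $p_n\in P$ with $p_n(o)\to\eta$; then $(p_n^{-1})$ is a diverging sequence in $P$ whose graphs degenerate onto $\eta^\perp$. So ``asymptotic space $\Rightarrow$ perpendicular of a limit point $\Rightarrow$ perpendicular of a fixed point'' breaks at the last arrow, and the set of all subsequential asymptotic spaces of an arbitrary non-precompact amenable $H$ cannot be bounded by limit-set considerations alone. (A smaller inaccuracy: the kernel line $V\cap(T_xM\times 0)$ is not $\lim \C D_xf_n(\xi_0)$ for a fixed lightlike $\xi_0$; writing $f_n=k_na_nk_n'$ in a $KAK$ decomposition it is $\lim (k_n')^{-1}(\hbox{contracted line of } a_n)$, i.e.\ the boundary limit of $f_n^{-1}(o)$, not the image of a fixed direction.) To close the argument you should either prove directly that each individual asymptotic space is $H$-invariant --- a priori the collection of asymptotic spaces is only invariant \emph{as a set} under $H$ --- and then conclude with the three-lines compactness lemma above, or else restrict, as the paper implicitly does, to the distinguished $H$-invariant degenerate hyperplanes and show separately that these are the ones relevant to the later foliation argument.
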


\begin{proof} $H$ is contained in a maximal parabolic group $P$. By definition $P$ is the stabilizer of a lightlike direction $u \in \C^{1+n}$. One then observes that $(\C u)^\perp$ is a common  asymptotic space for all diverging sequences in $P$, and hence for $H$. 

Assume now that $H$ preserves two other  different degenerate complex hyperplanes  $(\C v)^\perp$ and $(\C w)^\perp$. Let us prove that $H$ is pre-compact in this case. Indeed  $H$ preserves the complex 3-space $W= \sf{Span}_\C(u, v , w)$ and 3-directions inside it,  and also $W^\perp$. Since $W^\perp$
is spacelike (the metric on it is positive), it suffices to consider the case $W^\perp = 0$. 
So the statement reduces to the compactness of the subgroup of $\U(1, 2)$ preserving 3 different $\C$-lines. This is a classical fact related to the definition of the cross ratio. 
\end{proof}

\subsubsection{Varying $x$}

${}$ \\

At our fixed $x$, we  have one or two asymptotic leaves. By homogeneity, we have the same property, one or two asymptotic leaves,  for any $y \in M$. If there are two, we arrange to choose an asymptotic leaf denoted    $F_y$, in order to insure continuity (at least) in a neighbourhood of $x$.

\begin{fact} \label{equality} Assume $y$ and $z$ near $x$. If $G_y \cap G_z $ is non-compact, then $F_y = F_z$. 

\end{fact}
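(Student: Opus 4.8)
\textbf{Proof proposal for Fact \ref{equality}.}

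The plan is to exploit the fact that elements of $G_y \cap G_z$ are isometries fixing two points, $y$ and $z$, and to run the asymptotic-leaf construction simultaneously at both points. First I would take a diverging sequence $(f_n)$ in $G_y \cap G_z$; since this group is non-compact by hypothesis, such a sequence exists, and since $y,z$ are near $x$ we may take it to be a diverging sequence in (a conjugate of) the amenable isotropy, so Fact~\ref{asymptotic}'s preceding lemma applies: there are one or two degenerate complex hyperplanes which are asymptotic spaces for $(f_n)$ at $y$, and likewise at $z$, and passing to a subsequence we may assume $(f_n)$ has a genuine asymptotic space $V^1$ at $y$ and one at $z$, giving asymptotic leaves $E^1_y \ni y$ and $E^1_z \ni z$. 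The key point is that these are really pieces of the \emph{same} totally geodesic lightlike hypersurface: the limit $E$ of the graphs $Graph(f_n)$ in $M\times M$ (along a common subsequence) is a single geodesic submanifold, and its projection $E^1$ is a lightlike geodesic complex hypersurface passing through \emph{both} $y$ and $z$, because $f_n(y)=y$ and $f_n(z)=z$ for all $n$ forces $(y,y),(z,z)\in Graph(f_n)$ and hence both lie in $E$. Thus $E^1$ is an asymptotic leaf through $y$ and simultaneously one through $z$.

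Next I would invoke the uniqueness built into the construction: by the lemma just before \S\ref{asymptotic} (the one classifying asymptotic spaces of a non-precompact amenable subgroup of $\U(1,n)$), the asymptotic spaces at a given point form a set of size one or two, and they are the $H$-invariant degenerate hyperplanes where $H$ is the (amenable) isotropy. Since $F_y$ was \emph{chosen} among these in a way that is continuous near $x$, and $E^1$ is one of them through $y$, I would argue that $E^1$ agrees with $F_y$ (using that $E^1$ is obtained as a limit along a subsequence of a sequence in $G_y$, so it is an asymptotic leaf of $G_y$ at $y$, and continuity/the at-most-two count pins down which one it is; if both choices occur one treats them symmetrically). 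The same reasoning at $z$ gives $E^1 = F_z$. Hence $F_y = E^1 = F_z$.

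The main obstacle I anticipate is the bookkeeping around the ``one or two'' ambiguity: a priori $G_y$ could have two asymptotic leaves and only one of them need coincide with $E^1$, so one must check that the leaf $E^1$ produced from the \emph{common} sequence $(f_n)\subset G_y\cap G_z$ is the distinguished one $F_y$ (and $F_z$). I would handle this by noting that $E^1$ contains $z$ (resp. $y$), which a generic asymptotic leaf of $G_y$ need not, so this incidence selects the correct leaf; alternatively, since $F_y$ is invariant under all of $G_y$, in particular under $G_y\cap G_z$, and $F_y$ is degenerate and complex-codimension one, one shows $F_y$ must be the $(f_n)$-asymptotic hyperplane by the invariance clause of the preceding Fact. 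A secondary technical point is locality: all of this is carried out in a convex neighborhood $O$ of $x$, so $F_y,F_z$ are germs of hypersurfaces, and the equality $F_y=F_z$ is an equality of germs along the (connected) intersection; real-analyticity of the $G$-action, already used in the previous section, lets one propagate the coincidence where needed.
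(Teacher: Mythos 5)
Your proof is correct and follows essentially the same route as the paper: take a diverging sequence in $G_y\cap G_z$, observe that $(y,y)$ and $(z,z)$ lie in every graph, hence both $y$ and $z$ lie in the projection of any limit, so an asymptotic leaf at $y$ is one at $z$. The paper phrases this via the common fixed-point set $S\supset\{y,z\}$ of the sequence (over which each graph is the diagonal of $S\times S$), and is in fact less explicit than you are about resolving the one-or-two ambiguity in the choice of $F_y$.
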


\begin{proof} Let  $(f_n) $ is a diverging sequence in $G_y \cap G_z$. The fixed point set of each $f_n$ is a geodesic submanifold containing $y$ and $z$. The intersection of all 
of them when $n$ varies 
is  a geodesic submanifold $S$ containing $y$ and $z$, fixed 
by all the  $f_n$.  The graph of $f_n$
above $S$ is the diagonal of $S \times S$. Hence, $S$ is contained in the projection of any limit of $Graph(f_n)$. Therefore, any asymptotic leaf at $y$ is also
asymptotic at $z$. 
\end{proof}



\subsection{Geometry}


${}$




\begin{fact} (Remember  the notation $ \mathfrak{I}_x =\g_x \cap W_X^s$). If 
$X$ can be chosen such that $\dim  \I_x \geq 3$, then for any $y \in F_x$, $\g_y \cap \I_x \neq 0$. In particular $F_y = F_x$.
\end{fact}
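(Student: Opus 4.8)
The plan is to exploit the structure established in \S\ref{asymptotic} and \S\ref{light.geodesic}: the asymptotic leaf $F_x$ is a lightlike geodesic complex hypersurface, so it carries the parallel complex line field $N$ (the kernel of $g|_{TF_x}$) and the associated Riemannian foliation $\mathcal N$ of $F_x$ with leaves that are complex curves. By Corollary \ref{preservation}, any isometry of $M$ that preserves $F_x$, fixes a point of $F_x$, and is unipotent on the tangent space at that point must preserve each leaf of $\mathcal N$ individually. The idea is that the elements $\exp(tW)$ for $W \in \I_x$ are exactly such unipotent isometries fixing $x$ and preserving $F_x$, so they fix $x$ and then must preserve the $\mathcal N$-leaf through $x$; if $\dim \I_x \geq 3$ this forces the orbit $\exp(\I_x).y$ of a nearby point $y \in F_x$ to be small enough that some nontrivial element of $\I_x$ stabilizes $y$.

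First I would check that $\I_x = \g_x \cap W^s_X$ does preserve $F_x$: by the construction in \S\ref{asymptotic}, $F_x$ is an asymptotic leaf built from a diverging sequence in $G_x$, its tangent space at $x$ is the lightlike hyperplane $V^1 = (\C u)^\perp$ where $\C u$ is the $H$-invariant lightlike direction, and since $\I_x \subset \g_x$ consists of nilpotent elements lying in the parabolic $P$, the flows $\exp(tW)$ fix $u$ and hence preserve $(\C u)^\perp$ and therefore the geodesic hypersurface $F_x = \exp_x(\text{open subset of } V^1)$. Moreover $D_x\exp(tW)$ is unipotent since $W$ is nilpotent. So Corollary \ref{preservation} applies: each $\exp(tW)$, $W \in \I_x$, preserves the leaf $\mathcal N_x$ of $\mathcal N$ through $x$, which is a complex curve — real dimension $2$.

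Next I would run a dimension count. The evaluation map $W \in \I_x \mapsto W(x) \in T_xM$ has image contained in $T_x\mathcal N_x$, because the one-parameter groups $\exp(tW)$ preserve $\mathcal N_x$ and fix $x$, hence their orbits stay in $\mathcal N_x$ — wait, more carefully: I would instead evaluate at a point $y \in F_x$ near $x$. The group $\I_x$ preserves $F_x$ and its foliation $\mathcal N$; restricting to the (local) Riemannian quotient $Q' = F_x/\mathcal N$, the action of $\exp(\I_x)$ is by unipotent isometries of a Riemannian manifold, hence trivial, so $\I_x$ preserves every $\mathcal N$-leaf in $F_x$, in particular the leaf $\mathcal N_y$ through $y$. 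Thus the orbit map $\I_x \to \mathcal N_y$, $W \mapsto \exp(W).y$, has image in a $2$-real-dimensional manifold, so its differential at $0$, namely $W \mapsto W(y)$, has rank $\leq 2$. If $\dim \I_x \geq 3$ the kernel is nonzero: there is $0 \neq W \in \I_x$ with $W(y) = 0$, i.e. $W \in \g_y$. Hence $\g_y \cap \I_x \neq 0$, and since any nonzero element of $\I_x$ is nilpotent, $G_y$ contains a nontrivial unipotent, so $G_y$ (equivalently $G_x \cap G_y$ up to the arrangement) is noncompact; then Fact \ref{equality} gives $F_y = F_x$.

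The main obstacle I anticipate is the bookkeeping around ``near $x$'' and the passage from the infinitesimal statement $W(y)=0$ to membership in $\g_y$ and then to the hypothesis of Fact \ref{equality}: one must ensure $y$ is chosen in the common domain where $F_x$, the foliation $\mathcal N$, the convex neighborhoods, and the continuous choice of asymptotic leaf are all defined, and that the nilpotent $W \in \g_y$ we produce actually lies in a subgroup whose non-precompactness triggers Fact \ref{equality} (it does, since $\exp(\R W) \subset G_x \cap G_y$ is a nontrivial unipotent one-parameter group, hence unbounded). A secondary subtlety is making sure the orbit of $y$ under $\exp(\I_x)$ really lands inside a single $\mathcal N$-leaf rather than merely inside $F_x$; this is where Corollary \ref{preservation} (unipotent $\Rightarrow$ preserves leaves individually) is essential, and one must verify its unipotency hypothesis holds at the point $y$, not just at $x$ — which follows because $D_y\exp(tW)$ is conjugate into the unipotent flow, $W$ being a fixed nilpotent element of $\g$.
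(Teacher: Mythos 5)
Your proof is correct and follows essentially the same route as the paper's: apply Corollary \ref{preservation} to the unipotent flows $\exp(tW)$, $W\in\I_x$, to conclude that $\I_x$ preserves each leaf of $\mathcal N$ individually, deduce that the evaluation $\I_x(y)$ lies in the $2$-dimensional space $T_y\mathcal N_y$, get a nonzero kernel from $\dim\I_x\geq 3$, and close with Fact \ref{equality}. The extra verifications you supply (that $\exp(\I_x)$ preserves $F_x$ via the parabolic, and that the relevant subgroup of $G_x\cap G_y$ is non-precompact because it is a nontrivial unipotent one-parameter group) are exactly the points the paper leaves implicit.
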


\begin{proof} By Corollary \ref{preservation}, the group generated by $\I_x$ preserves each leaf of the characteristic foliation $\mathcal N$ of $F_x$.  Equivalently, the evaluation $\I_x(y) $ is contained in the tangent space of the characteristic leaf 
$\mathcal N_y$, for any $y \in F_x$. Since  $\dim \I_x \geq 3$, and $\dim {\mathcal N}_y = 2$, we conclude that $\I_x \cap \g_y \neq 0$, and hence $F_x = F_y$ by Fact \ref{equality}.



\end{proof}


\subsubsection{Same conclusion in the other case}





Assume now that for any choose of $X$,  $\dim \I_x = 2$, say $\I_x = \sf{Span}\{a, b\}$, and let $A = \exp a$ and
$B = \exp b$. The set of fixed points of a nilpotent  element, say $a$ (or equivalently a unipotent element $A$)  is a geodesic submanifold $Fix(a)$ of complex codimension 2 (one cheeks this for elements of $\U(1, n)$). 

Choose   $y \in Fix(a)$, then by Fact \ref{equality}, $F_y = F_x$. Since $a \in \g_y$, we can apply  Facts \ref{Kow2} and \ref{dimension} and get for the same $X$ that $\I_y =
 \g_y \cap W^s_X$ has dimension $ \geq 2$.
 

 Let $\I$ be the subalgebra of $W^s_X$ generated by $\I_x$ and $\I_y$. It is nilpotent since contained in $W^s_X$. 
 Assume furthermore that $y \in Fix(a) - Fix(b)$, then 
  $\I_x \neq \I_y$, and hence $\dim \I \geq 3$. 
 
 Since it is generated by $\I_x$ and $\I_y$, $\I$ preserves individually the leaves of the characteristic foliation $\mathcal N$ on $F_x$. As above, by the inequality on dimensions, 
 any $z \in F_x$ is fixed by a non-trivial element of $\I$. Therefore, by Fact \ref{equality}, $F_z = F_x$.





\subsection{End, Contradiction} The previous conclusion means that  two asymptotic leaves    are disjoint or equal,  that is they define a foliation of $M$, of (real) codimension 2.  This foliation is $G$ invariant. Therefore, $G$ acts on the (local) quotient space of the foliation. This contradicts our hypothesis that  $G$ does not act (locally) on surfaces.



\end{document}